\begin{document}
%***************************************************************

%***************************************************************
\begin{center}

    {\Huge Closest Spacing of Eigenvalues}

    \vspace{1.85in}

    {\Large Jade P. Vinson}

    \vspace{1.45in}

    {\normalsize A DISSERTATION \\
      PRESENTED TO THE FACULTY \\
      OF PRINCETON UNIVERSITY \\
      IN CANDIDACY FOR THE DEGREE \\
      OF DOCTOR OF PHILOSOPHY\\}

    \vspace{1.1in}

    {\normalsize RECOMMENDED FOR ACCEPTANCE \\
      BY THE DEPARTMENT OF \\
      MATHEMATICS\\}

    \vspace{0.5in}

    {\normalsize JUNE, 2001}

\end{center}

%***************************************************************
\newpage

\begin{center} {\bf Abstract} \end{center}

We consider several variants of the following problem: pick an
\nbyn matrix from some unitary ensemble of random matrices.  In an
interval containing many eigenvalues, what is the closest spacing
between two eigenvalues? We are interested in the
correct scaling for this random variable as \nlim, and the
limiting distribution of the rescaled random variable.

One can predict the distribution of the minimum spacing
heuristically by assuming that the consecutive spacings are
independent random variables, chosen from the consecutive spacing
distribution for unitary ensembles.  The consecutive spacings are
{\it not} independent.  However, in all cases studied, the
heuristics predict asymptotically the correct scaling and
distribution of the minimum spacing.

Using the method of moments, we show that the number of eigenvalue
pairs in a given interval and closer than a small distance
$\gamma$, is approximately a Poisson random variable with mean as
predicted by heuristics.  Varying $\gamma$, we obtain the result
for minimum spacing.

In the most concrete special case, our Main Theorem is this:

\begin{thm}[Main Theorem]
  Choose a random $n\times n$ unitary matrix.
  Let $Z_n$ be the closest spacing between any two eigenvalues.
  Fix $\beta>0$.  Then as $n\to\infty$,
  \begin{equation*}
    \Pr\left( Z_n \left(
          \frac{n^4}{72\pi}
        \right)^{\frac13}
      > \beta \right)
    \to e^{-\beta^3}.
  \end{equation*}
\end{thm}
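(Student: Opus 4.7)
\emph{Reformulation and calibration.} The plan is to reduce the minimum-spacing event to a Poisson zero-count and then prove Poisson convergence by the method of moments, as foreshadowed in the abstract. For $\gamma>0$, let $N_\gamma$ denote the number of unordered pairs $\{i,j\}$ of eigenvalues with angular separation less than $\gamma$; then $\{Z_n>\gamma\}=\{N_\gamma=0\}$. I would show that, with $\gamma_n=\beta(72\pi/n^4)^{1/3}$, the count $N_{\gamma_n}$ converges in distribution to $\mathrm{Poisson}(\beta^3)$, so that evaluating the mass at zero yields $\Pr(Z_n>\gamma_n)\to e^{-\beta^3}$, which is the theorem. To see that this is the correct scaling, recall that Haar measure on $U(n)$ makes the eigenangles a determinantal point process with Dirichlet kernel $K_n(\theta,\phi)=\sin(n(\theta-\phi)/2)/(2\pi\sin((\theta-\phi)/2))$. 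The two-point correlation $\rho_2(\theta,\phi)=K_n(\theta,\theta)K_n(\phi,\phi)-K_n(\theta,\phi)^2$ expands for $|\theta-\phi|$ small as $n^4(\theta-\phi)^2/(48\pi^2)+O(n^6(\theta-\phi)^4)$, so that $E[N_\gamma]\sim n^4\gamma^3/(72\pi)$; at $\gamma=\gamma_n$ this tends to $\beta^3$.

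\emph{Factorial moments.} For Poisson convergence it suffices to show $E\bigl[N_{\gamma_n}(N_{\gamma_n}-1)\cdots(N_{\gamma_n}-k+1)\bigr]\to\beta^{3k}$ for every fixed $k\ge 1$. Expanded as a sum over ordered $k$-tuples of distinct unordered pairs, and invoking the determinantal structure $\rho_{2k}(\theta_1,\dots,\theta_{2k})=\det[K_n(\theta_i,\theta_j)]_{i,j=1}^{2k}$, each moment becomes an integral of a $2k\times 2k$ determinant over the region in which $k$ marked pairs $(\theta_{2i-1},\theta_{2i})$ each lie within $\gamma_n$ of one another. The dominant contribution comes from configurations with $2k$ distinct eigenvalues whose marked pairs are pairwise at distance $\gg\gamma_n$: the kernel matrix is then close to block diagonal with $2\times 2$ blocks, the determinant factors approximately as $\prod_{i=1}^k\rho_2(\theta_{2i-1},\theta_{2i})$, and termwise integration gives $\beta^{3k}$.

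\emph{The main obstacle.} The technical heart of the argument is bounding the remaining ``cluster'' terms, in which some of the marked pairs share an eigenvalue or happen to lie within $O(\gamma_n)$ of each other. For $j$ eigenvalues confined to a common window of size $O(\gamma_n)$, the determinantal correlation obeys the Vandermonde-squared bound $\rho_j(\theta_1,\dots,\theta_j)=O\bigl(n^{j^2}\prod_{p<q}(\theta_p-\theta_q)^2\bigr)$, reflecting the level repulsion of the CUE. Combining this bound with the reduced integration volume forced by the clustering constraint shows that any cluster containing $j\ge 3$ eigenvalues contributes $O(n^{-(j^2-4)/3})=o(1)$. After accounting for the tame combinatorial factors that assign pairs to eigenvalue indices, every factorial moment converges to $\beta^{3k}$; the moment-determinacy of the Poisson law then gives $N_{\gamma_n}\Rightarrow\mathrm{Poisson}(\beta^3)$, and in particular $\Pr(N_{\gamma_n}=0)\to e^{-\beta^3}$, completing the proof.
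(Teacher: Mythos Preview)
Your overall plan matches the paper's: reduce the minimum spacing to a zero-count, then prove Poisson convergence of the count via moments using the determinantal structure of the CUE. Your calibration of the mean is correct, and working with factorial moments rather than the raw moments the paper uses is a harmless (slightly cleaner) variation---the paper's combinatorics of ``collapses'' is essentially the passage between raw and factorial moments.

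The genuine gap is in your error analysis for the case of $2k$ distinct eigenvalues. You claim that when the $k$ marked pairs are pairwise at distance $\gg\gamma_n$, the $2k\times 2k$ kernel matrix is close to block diagonal. This is false: since $\gamma_n\sim n^{-4/3}\ll n^{-1}$, at pair separations $d$ with $\gamma_n\ll d\lesssim 1/n$ the off-block entries $K_n(\theta_{2i-1},\theta_{2j-1})$ are still of full size $n/(2\pi)$ and the determinant does not factorize at all. Your Vandermonde bound $\rho_j=O\bigl(n^{j^2}\prod_{p<q}(\theta_p-\theta_q)^2\bigr)$ is sharp only for points within $O(1/n)$ of one another and is far too weak at intermediate cross-distances. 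The paper closes exactly this gap by introducing a cutoff $\alpha\sim(|I|/n^2)^{1/3}\sim n^{-2/3}$: on the region where some two pair-centers lie within $\alpha$, row and column operations together with the derivative bound $\partial^{\,j}K_n=O(n^{j+1})$ control the full determinant; on the complementary region, the decay $|K_n(x,y)|=O(1/|x-y|)$ for $|x-y|\ge 1/n$ makes the off-block-diagonal terms smaller than the block-diagonal product by a factor $O((n\alpha)^{-2})$, and balancing the two gives a relative error $O((n|I|)^{-2/3})$. Your Vandermonde-repulsion argument does handle the paper's ``mixed collapses'' (shared-eigenvalue clusters) neatly, and the exponent $-(j^2-4)/3$ you quote is correct there; but without an analogue of the $\alpha$-split---or an $L^2$ argument exploiting $\int|K_n(x,y)|^2\,dy=n/(2\pi)$---the off-block-diagonal contribution from moderately spaced pairs is not controlled and the proof does not close.
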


The correct scaling for the closest eigenvalue
spacing is $n^{-1} (n|I_n|)^{-\frac13}$.  In this case, $n^{-1}$
is comparable to the mean spacing and $n|I_n|$ is comparable to
the expected number of eigenvalues in the interval being
considered. On the other hand, if the eigenvalues were independent
and distributed according to a Poisson process, then the closest
spacing would scale like $n^{-1}(n |I_n|)^{-1}$, which is much
closer.  Thus we have another confirmation of what is often said:
``The eigenvalues of a unitary matrix repel each other''.

Our results also apply to the Gaussian unitary ensemble (GUE) and,
with restrictions, to a universal class of unitary ensembles (UUE)
studied by Deift, Kreicherbauer, McLaughlin, Venakides, and Zhou.
In each of these cases, the expected number of eigenvalues in the
interval must be large, and the interval must be contained inside
the bulk distribution of the eigenvalues.

%***************************************************************
\newpage

\begin{center} {\bf Acknowledgements} \end{center}

I would like to thank my advisor, Peter Sarnak, for suggesting
the problem of minimum spacing and an approach using
the method of moments.  Peter Sarnak always shared his
advice on mathematical problems, and exercised good judgement
with regard to his role in my studies.  At first he played an
active role, suggesting topics to think
about and articles to read, but during
the last year his involvement tapered off rapidly as soon
as the minimum spacing problem looked promising.
I think that this is the correct balance.

Financial support was provided by the National Defense
Science and Engineering Graduate Fellowship Program.

I find that writing a sincere Acknowledgement is
an extremely difficult task, and it would take days of
reflection to do it well.  Therefore instead of attempting
to thank the many people who have influenced my
mathematical life, I will limit the scope of this Acknowledgement
to the dissertation itself.

I would like to thank my family for encouraging me to complete the Ph.D.
in time for graduation in June.  Assuming the final public oral
in a week is successful, I will be able to relax for the first summer of
my adult life.  I would also like to thank my colleagues
at the Institute for Defense Analyses for encouraging me to finish my
Ph.D. now rather than later.

The dissertation was completed with little time to spare.  I would
like to thank Peter Sarnak and Yakov Sinai for reading it
and making suggestions.  I would also like to thank Scott Kenney,
the department manager, for walking me through the formalities
required to schedule a final public oral.

%***************************************************************
\newpage

%***************************************************************
\section{Introduction}
%***************************************************************

We consider several variants of the following problem: pick an
\nbyn matrix from some unitary ensemble of random matrices.  In an
interval containing many eigenvalues, what is the closest
spacing between two eigenvalues? We
are interested in the correct scaling for this random variable
as \nlim, and the limiting distribution of the rescaled
random variable.

One can predict the distribution of the minimum spacing
heuristically by assuming that the consecutive spacings are
independent random variables, chosen from the consecutive spacing
distribution for unitary ensembles.  The consecutive spacings are
{\it not} independent.  However, in all cases studied, the
heuristics predict asymptotically the correct scaling and
distribution of the minimum spacing.

Using the method of moments, we show that the number of eigenvalue
pairs in a given interval and closer than a small distance
$\gamma$, is approximately a Poisson random variable with mean as
predicted by heuristics.  Varying $\gamma$, we obtain the result
for minimum spacing.

Our results apply to the circular unitary ensemble (CUE), the Gaussian
unitary ensemble (GUE), and with restrictions to a universal class of
unitary ensembles (UUE) studied by
Deift, Kreicherbauer, McLaughlin, Venakides, and Zhou.
In each of these cases, the expected number of eigenvalues in the interval
must be large, and the interval must be contained inside the bulk distribution
of the eigenvalues.

%***************************************************************
\subsection{Statement of the Main Theorem}

The Main Theorem is easiest to state in the case of
the circular unitary ensemble (CUE),
which is the compact group $U_n$ with Haar measure.

\begin{thm}
  For each $n$, let $I_n \subset [0,2\pi)$ be an interval
  so that $n|I_n|\to\infty$ as $n\to\infty$.

  Choose a matrix randomly from $CUE_n$ and let $Z_n$ be the
  closest spacing between any two eigenvalues whose average is
  in $I_n$.  Fix $\beta>0$.  Then as $n\to\infty$,
  \begin{equation*}
    \Pr\left( Z_n \left(
          \frac{n^4|I_n|}{144\pi^2}
        \right)^{\frac13}
      > \beta \right)
    \to e^{-\beta^3}.
  \end{equation*}
\end{thm}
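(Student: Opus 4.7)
The plan is to reduce $\{Z_n>\gamma\}$ to a pair-counting event and then to prove a Poisson limit for the pair count by the method of moments, as advertised in the abstract. Set
\[\gamma=\gamma_n(\beta)=\beta\left(\frac{144\pi^2}{n^4|I_n|}\right)^{1/3}\]
and let $N_\gamma$ be the number of unordered eigenvalue pairs $\{\theta_i,\theta_j\}$ with $|\theta_i-\theta_j|<\gamma$ and $(\theta_i+\theta_j)/2\in I_n$. Then $\{Z_n>\gamma\}=\{N_\gamma=0\}$, so it suffices to show that $N_\gamma\to\mathrm{Poisson}(\beta^3)$ in distribution; by the method of moments it is enough to prove the factorial-moment convergence
\[\mathbf{E}\bigl[N_\gamma(N_\gamma-1)\cdots(N_\gamma-k+1)\bigr]\to\beta^{3k}\qquad\text{for every fixed }k\ge 1.\]

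The first moment is a direct sine-kernel calculation. For CUE the two-point correlation function is $R_2(x,y)=K_n(x,x)K_n(y,y)-K_n(x,y)^2$, where the Dyson kernel
\[K_n(\theta_1,\theta_2)=\frac{1}{2\pi}\cdot\frac{\sin\bigl(n(\theta_1-\theta_2)/2\bigr)}{\sin\bigl((\theta_1-\theta_2)/2\bigr)}\]
admits the Taylor expansion $K_n(0,v)=(n/(2\pi))(1-(n^2-1)v^2/24+O(v^4))$, giving $R_2(v)\sim n^4v^2/(48\pi^2)$ as $v\to 0$. Since $n\gamma\sim\beta(144\pi^2)^{1/3}(n|I_n|)^{-1/3}\to 0$, the expansion is valid throughout the range $|v|<\gamma$, and translation invariance reduces the first moment to $|I_n|\int_0^\gamma R_2(v)\,dv\sim n^4|I_n|\gamma^3/(144\pi^2)=\beta^3$, exactly the predicted Poisson mean.

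For $k\ge 2$ one writes
\[\mathbf{E}\bigl[N_\gamma^{(k)}\bigr]=\frac{1}{2^k}\int_{A_k}R_{2k}(x_1,y_1,\ldots,x_k,y_k)\,dx_1\,dy_1\cdots dx_k\,dy_k+\mathcal{E}_k,\]
where $A_k$ enforces the closeness and midpoint conditions for each pair, $R_{2k}=\det(K_n(\xi_i,\xi_j))_{2k\times 2k}$, and $\mathcal{E}_k$ collects the contribution from $k$-tuples of pairs sharing an eigenvalue (configurations of three or more clustered eigenvalues, absorbed into the clustered-region analysis below). Split $A_k$ according to whether the $k$ midpoints $u_j=(x_j+y_j)/2$ are pairwise separated by at least some threshold $\delta_n$, chosen so that $\gamma\ll\delta_n$ and $n\delta_n\to\infty$ (feasible because $n|I_n|\to\infty$). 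On the well-separated region the off-diagonal blocks of the kernel matrix have entries of size $O(1/\delta_n)=o(n)$, so expanding the determinant shows $R_{2k}=\prod_{j=1}^k R_2(x_j,y_j)\cdot(1+o(1))$; the integral then factorizes into $k$ independent copies of the first-moment computation and contributes $\beta^{3k}+o(1)$.

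The main obstacle is controlling the complementary clustered region, where four or more eigenvalues are confined to an arc of length $O(\delta_n)$. The key tool is the high-order vanishing of the sine-kernel correlation functions at coincidences: the determinantal structure forces $R_{2m}(\xi_1,\ldots,\xi_{2m})$ to contain a Vandermonde factor $\prod_{i<j}(\xi_i-\xi_j)^2$ near the diagonal, which supplies enough powers of the small intra-cluster spacings to defeat the volume of the cluster region. A careful accounting of these powers, together with the choice of $\delta_n$, shows that the clustered contribution is $o(\beta^{3k})$. Combining the main term with this error estimate gives $\mathbf{E}[N_\gamma^{(k)}]\to\beta^{3k}$ for every $k\ge 1$, hence $N_\gamma\to\mathrm{Poisson}(\beta^3)$ in distribution, and the theorem follows.
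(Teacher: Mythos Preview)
Your overall strategy matches the paper's: reduce to the pair count, apply the method of moments, split into well-separated and clustered configurations, and show the clustered part is negligible. Your use of factorial moments in place of the paper's raw moments is a genuine simplification, since it bypasses the Stirling-number combinatorics the paper needs to identify $\sum_j a_{k,j}\mu^j$ as the Poisson moment.

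However, there is a real gap in the well-separated estimate. The claim that ``off-diagonal blocks have entries $O(1/\delta_n)=o(n)$, so $R_{2k}=\prod R_2\cdot(1+o(1))$'' is false as stated. The diagonal blocks have entries of size $n$, but their $2\times2$ determinants $R_2(x_j,y_j)\sim n^4v_j^2$ are much smaller than $n^2$ because $n\gamma\to 0$; there is enormous cancellation within each block. A single off-block transposition contributes a term of order $n^{2k-2}/\delta_n^2$, and the ratio to $\prod R_2\sim n^{4k}\gamma^{2k}$ is $1/((n\gamma)^{2k}(n\delta_n)^2)$, which blows up. A Schur-complement computation confirms the same: the correction to $\det D_2$ is of order $1/(n^2v_1^2\delta_n^2)$, which is not $o(R_2)$ under your hypotheses on $\delta_n$.

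The clustered-region argument has the mirror-image problem. The full Vandermonde bound $R_{2m}\lesssim n^{(2m)^2}\prod_{i<j}(\xi_i-\xi_j)^2$ is only sharp when \emph{all} $2m$ points lie within $O(1/n)$ of each other; but you chose $\delta_n$ with $n\delta_n\to\infty$, so the between-pair factors $(\xi_i-\xi_j)^2\sim\delta_n^2$ make this bound worse than the trivial $R_{2m}=O(n^{2m})$. If you try to balance the two regions with your bounds, the constraints on $\delta_n$ are inconsistent.

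What the paper does to close both gaps simultaneously is a single device: within each pair $(x_j,y_j)$, subtract the $x_j$-row from the $y_j$-row and the $x_j$-column from the $y_j$-column. This turns every entry in the $y_j$-row and $y_j$-column into a first or second difference of $K_n$, picking up factors of $n\gamma$ via the derivative bound $|\partial^\ell K_n|=O(n^{\ell+1})$. After this operation the off-diagonal blocks carry the same $\gamma$-scaling as the diagonal blocks, and replacing a diagonal-block entry by an off-block entry costs exactly a factor $1/(n\alpha)$ rather than the naive $1/(n\alpha)\cdot 1/(n\gamma)$. With this in hand the paper balances at $\alpha=(|I|/n^2)^{1/3}$ and obtains the relative error $(n|I|)^{-2/3}$. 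Your proposal needs this row/column reduction (or an equivalent Taylor-expansion bookkeeping) to go through.
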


Of particular interest is the case when $I_n=[0,2\pi)$ for all $n$:

\begin{thm}[Main Theorem, CUE Version] \label{mainCUE}
  Choose a matrix randomly from $CUE_n$ and let $Z_n$ be the
  closest spacing between any two eigenvalues.
  Fix $\beta>0$.  Then as $n\to\infty$,
  \begin{equation*}
    \Pr\left( Z_n \left(
          \frac{n^4}{72\pi}
        \right)^{\frac13}
      > \beta \right)
    \to e^{-\beta^3}.
  \end{equation*}
\end{thm}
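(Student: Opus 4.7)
\medskip
\noindent\textbf{Proof proposal.}

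The plan is to linearize the minimum spacing problem by counting close pairs of eigenvalues and then apply the method of moments. Let $N_n(\gamma)$ denote the number of unordered pairs of distinct eigenvalues whose angular separation (along the shorter arc of the circle) is less than $\gamma$. Then the events $\{Z_n > \gamma\}$ and $\{N_n(\gamma) = 0\}$ coincide, so with the choice
\[
\gamma_n \;:=\; \beta \,\Bigl(\frac{n^4}{72\pi}\Bigr)^{-1/3},
\]
it suffices to prove $\Pr(N_n(\gamma_n) = 0) \to e^{-\beta^3}$. The CUE is a determinantal point process, providing $k$-point correlation functions $\rho_k(\theta_1,\ldots,\theta_k) = \det(K_n(\theta_i,\theta_j))_{i,j=1}^k$, where $K_n$ is the Christoffel--Darboux sine kernel with $K_n(\theta,\theta) = n/(2\pi)$.

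First I would verify the normalization by computing the first moment. Writing $\rho_2(\theta,\phi) = (n/(2\pi))^2 - |K_n(\theta,\phi)|^2$ and Taylor-expanding near the diagonal gives $\rho_2(\theta,\phi) \sim n^4(\theta - \phi)^2/(48\pi^2)$ as $\theta-\phi \to 0$, so
\[
\mathbb{E}[N_n(\gamma)] \;=\; \tfrac{1}{2}\!\int_0^{2\pi}\!\! d\theta\!\int_{|\phi-\theta|<\gamma}\!\! d\phi \;\rho_2(\theta,\phi) \;\sim\; \frac{n^4\gamma^3}{72\pi},
\]
which gives $\mathbb{E}[N_n(\gamma_n)] \to \beta^3$, matching the mean of $\mathrm{Poisson}(\beta^3)$.

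The heart of the argument is to show that the $k$-th factorial moment
\[
M_k \;:=\; \mathbb{E}\bigl[N_n(\gamma_n)(N_n(\gamma_n) - 1)\cdots(N_n(\gamma_n) - k + 1)\bigr]
\]
converges to $\beta^{3k}$ for every fixed $k$. Expanding $N_n(\gamma_n)$ as a sum over pairs, $M_k$ decomposes into integrals of $\rho_m$ with $k+1 \le m \le 2k$, according to how many of the $2k$ indices coincide among the $k$ chosen pairs. The leading contribution comes from configurations with all $2k$ coordinates distinct, grouped into $k$ disjoint close pairs that are mutually well separated; there the off-diagonal entries of $K_n$ between different pairs are small, the $2k \times 2k$ kernel matrix is approximately block-diagonal, and $\rho_{2k}$ factors as $\prod_{\ell=1}^k \rho_2(\theta_\ell,\phi_\ell)$. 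Integration then gives $(\mathbb{E}[N_n(\gamma_n)])^k \to \beta^{3k}$. Once the convergence $M_k \to (\beta^3)^k$ is established for every $k$, the method of moments concludes that $N_n(\gamma_n)$ converges in distribution to $\mathrm{Poisson}(\beta^3)$, and evaluating the limiting distribution at $0$ proves the theorem.

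The main obstacle I anticipate is controlling the error terms and showing that "clustered" configurations contribute negligibly. Two types need attention: configurations in which two of the close pairs are themselves within $O(1/n)$ of each other, where $K_n$ does not decay between them and the block-diagonal factorization fails; and configurations in which some of the $2k$ indices coincide, reducing to a lower correlation function $\rho_m$. The former will be handled using the quantitative off-diagonal bound $|K_n(\theta,\phi)| \le 1/(\pi|\theta-\phi|)$ combined with a Hadamard-type estimate on the $2k \times 2k$ determinant; the latter are integrals of $\rho_m$ with $m < 2k$ and must be shown to be $o(1)$. The key structural reason all of these remainders can be absorbed is that the chosen scale $\gamma_n$ is of order $n^{-4/3}$, much smaller than the mean spacing $n^{-1}$, so the Lebesgue measure of bad configurations is very small.
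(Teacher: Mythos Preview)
Your proposal is correct and follows essentially the same route as the paper: reduce $\{Z_n>\gamma\}$ to $\{\text{close-pair count}=0\}$, use the determinantal correlation functions (Gaudin's method) to express moments as integrals, isolate the block-diagonal main term to get Poisson moments, and bound the remaining ``clustered'' and ``shared-index'' configurations via the off-diagonal decay $|K_n(\theta,\phi)|\lesssim 1/|\theta-\phi|$ together with row/column reductions of the kernel determinant. The only cosmetic difference is that you work with factorial moments $M_k\to\beta^{3k}$, whereas the paper computes ordinary moments $E(\G^k)$ and matches them to $\sum_j a_{kj}\mu^j$ via the Stirling-number combinatorics of ``clean collapses''; these are equivalent formulations of the same argument.
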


Given Theorem~\ref{mainCUE} and its Corollary, observe that the
correct scaling for the closest eigenvalue spacing is $n^{-1}
(n|I_n|)^{-\frac13}$.  In this case, $n^{-1}$ is comparable to the
mean spacing and $n|I_n|$ is comparable to the expected number of
eigenvalues in the interval being considered.  On the other hand,
if the eigenvalues were independent and distributed according to a
Poisson process, then the closest spacing would scale like
$n^{-1}(n |I_n|)^{-1}$, which is much closer.  Thus we have
another confirmation of what is often said: ``The eigenvalues of a
unitary matrix repel each other''.

The universal unitary ensembles (UUE) are not completely standard.
Given a real analytic potential $V(x)$ with sufficient growth at infinity,
$UUE_n$ is the ensemble of Hermitian matrices with
the following joint probability density function (j.p.d.f.)
for the matrix entries:

\begin{equation*}
  (\mbox{const})\ \  e^{-n \sum V(\lambda_j)} dM
\end{equation*}

Using Weyl integration (Appendix~\ref{WeylAppendix}), the j.p.d.f. for the eigenvalues is
\begin{equation*}
  (\mbox{const})\ \
  \prod_{i<j} (\lambda_i - \lambda_j)^2
  e^{-n\sum V(\lambda_i)} d\Lambda
\end{equation*}

One could recover the Gaussian Unitary Ensemble (GUE)
as a special case of UUE by choosing the potential $V(x)=x^2$
and rescaling properly.  The word ``unitary'' in this context confuses
many newcomers to the field of random matrices.  The GUE is a
probability distribution on the set of {\it Hermitian} matrices
which is invariant under conjugation by any {\it unitary} matrix.

Our main result, in the case of universal ensembles, is the following:

\begin{thm}[Main Theorem, universal version]
  Let $V(x)$ be a real analytic potential which is regular and
  whose equilibrium measure $\Psi(x) dx$ is supported on a single interval
  $[a,b]$.  Fix $\epsilon > 0$.  For each $n$, let
  $I_n \subset [a+\epsilon,b-\epsilon]$ be an interval
  contained in the bulk distribution of eigenvalues, so that
  $n|I_n|\to\infty$ as $n\to\infty$.

  Choose a matrix randomly from $UUE_n$ and let $Z_n$ be
  the closest spacing between any two eigenvalues whose average is
  in $I_n$.  Fix $\beta>0$.  Then as $n\to\infty$,
  \begin{equation*}
    \Pr\left( Z_n \left(
          \frac{\pi^2 n^4}{9} \int_{I_n} \Psi(x)^4 dx
        \right)^{\frac13}
      > \beta \right)
    \to e^{-\beta^3}.
  \end{equation*}
\end{thm}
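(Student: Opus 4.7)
The plan is to prove the theorem by the method of moments applied to pair counts. Let $N_n(\gamma)$ be the number of unordered eigenvalue pairs $\{\lambda_i,\lambda_j\}$ with $|\lambda_i-\lambda_j|<\gamma$ and $(\lambda_i+\lambda_j)/2\in I_n$. Then $\{Z_n>\gamma\}=\{N_n(\gamma)=0\}$, so with
$$\gamma_\beta(n):=\beta\Bigl(\tfrac{\pi^2 n^4}{9}\int_{I_n}\Psi(x)^4\,dx\Bigr)^{-1/3},$$
it suffices to show that $N_n(\gamma_\beta(n))$ converges in distribution to a Poisson random variable with mean $\beta^3$. Since a Poisson is determined by its factorial moments, I will establish this by proving that the $k$-th factorial moment $E[N_n(N_n-1)\cdots(N_n-k+1)]$ tends to $\beta^{3k}$ for every fixed $k\ge 1$.

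\textbf{Mean via sine-kernel asymptotics.} For the mean, write
$$2E[N_n(\gamma)]=\iint \mathbf{1}_{|x_1-x_2|<\gamma,\ (x_1+x_2)/2\in I_n}\,R_2(x_1,x_2)\,dx_1\,dx_2,$$
where $R_2$ is the 2-point correlation. In the bulk, universality (Deift, Kreicherbauer, McLaughlin, Venakides, Zhou) gives
$$R_2\!\bigl(x-\tfrac{t}{2},x+\tfrac{t}{2}\bigr)=(n\Psi(x))^2\Bigl[1-\tfrac{\sin^2(\pi n\Psi(x) t)}{(\pi n\Psi(x) t)^2}\Bigr]+o(\cdot),$$
uniformly for $x\in[a+\epsilon,b-\epsilon]$. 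Because the cutoff $\gamma_\beta(n)$ is of order $n^{-4/3}$, the scaled separation $\pi n\Psi(x)t$ is small, so the Taylor expansion $1-\sin^2(\pi s)/(\pi s)^2=\pi^2 s^2/3+O(s^4)$ applies. Changing variables to centre $x$ and separation $t$, the inner $t$-integral over $|t|<\gamma$ gives $2\gamma^3/3$ per unit $x$, yielding
$$E[N_n(\gamma)]=\tfrac{\pi^2\gamma^3 n^4}{9}\int_{I_n}\Psi(x)^4\,dx+\text{lower order},$$
which equals $\beta^3$ at $\gamma=\gamma_\beta(n)$. The hypothesis $n|I_n|\to\infty$ is needed to make this leading term large compared with the Taylor remainder.

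\textbf{Higher factorial moments via determinants.} The $k$-th factorial moment is an integral of the $2k$-point correlation $R_{2k}$ over $k$-tuples of pairs. Exploiting the determinantal form $R_{2k}(x_1,\dots,x_{2k})=\det[K_n(x_i,x_j)]$ and expanding over permutations, one sees two kinds of contribution. The factorised term, in which the $2k$ variables split into $k$ well-separated pairs each contributing the local $\pi^2(n\Psi(x))^4 t^2/3$ computed above, produces $(E[N_n])^k\sim\beta^{3k}$ after accounting for the number of ways to partition $2k$ variables into $k$ pairs. The remaining ``connected'' terms, in which three or more of the $2k$ eigenvalues cluster together, should be lower order because the eigenvalue repulsion built into the sine kernel makes tight triples rarer than tight pairs by another power of the small scale.

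\textbf{The main obstacle.} The technical heart is controlling these connected terms uniformly at a scale $\gamma_\beta(n)\sim n^{-4/3}$, which is well below the mean spacing $n^{-1}$. For CUE (and GUE) explicit formulas for $K_n$ make this a direct estimation of sums of trigonometric (resp.\ Hermite-based) kernels. For general UUE one must combine the uniform sine-kernel universality of Deift et al.\ with quantitative control of the first few derivatives of $K_n$, so that the Taylor expansion of $1-\sin^2/(\cdot)^2$ is valid with a remainder small enough to survive being summed over $\sim(n|I_n|)^{4/3}$ potential configurations and over a varying local scale $1/(n\Psi(x))$ as $x$ ranges over $I_n$. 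This uniform control at sub-mean-spacing scales, rather than the combinatorics of pairings, is where I expect the main work to lie.
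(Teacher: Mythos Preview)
Your overall strategy matches the paper's: reduce $\Pr(Z_n>\gamma_\beta)$ to showing that the pair count $N_n(\gamma_\beta)$ is asymptotically Poisson$(\beta^3)$ via the method of moments. The paper works with ordinary moments and matches them to the Poisson moments through a combinatorial decomposition into ``collapses''; your choice of factorial moments is a legitimate and somewhat cleaner variant, and your computation of the mean (via the small-$s$ expansion $1-\sin^2(\pi s)/(\pi s)^2=\pi^2 s^2/3+O(s^4)$) lands on the same $\tfrac{\pi^2\gamma^3 n^4}{9}\int_{I_n}\Psi^4$.

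There is, however, a genuine gap in your higher-moment step. You assert that the $k$-th factorial moment is an integral of $R_{2k}$. It is not: the factorial moment sums over ordered $k$-tuples of \emph{distinct pairs}, and distinct pairs may share an eigenvalue---for instance $\{\lambda_1,\lambda_2\}$ and $\{\lambda_2,\lambda_3\}$ are distinct pairs contributing to $E[N_n(N_n-1)]$ but involve only three eigenvalues and hence an $R_3$ integral, not $R_4$. Your ``connected terms'' refer only to permutations in the expansion of $\det[K_n(x_i,x_j)]_{2k\times 2k}$, i.e.\ to what the paper calls the off-block-diagonal terms of \emph{clean collapses}; the shared-index configurations are a separate family, the paper's \emph{mixed collapses}, which must be enumerated and bounded on their own. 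The paper does this by row/column operations on the $m\times m$ determinant (with $m<2k$) and derivative bounds on $K_n$, obtaining a relative error $O((n|I_n|)^{-1})$; this, together with the off-block-diagonal estimate $O((n|I_n|)^{-2/3})$, is where the hypothesis $n|I_n|\to\infty$ is actually consumed.

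A secondary remark on method: the paper does not pass through sine-kernel universality at all. For the mean it Taylor-expands the $2\times 2$ determinant directly and evaluates the coefficients using the Deift--Zhou asymptotics for $\eta_{n-1},\eta_n$ \emph{and their derivatives}. The point is that the same derivative bound $\partial^k K_n(x,y)=O(n^k/|x-y|)$ for $|x-y|\ge n^{-1}$ (the paper's Lemma~\ref{derivativeBoundsUniversal}) is the essential input for \emph{both} the off-block-diagonal and the mixed-collapse estimates. Your sine-kernel route is fine for the mean, but for the higher-moment errors at scale $\gamma\sim n^{-4/3}|I_n|^{-1/3}$ you will need these derivative bounds on $K_n$ itself, not merely pointwise convergence to the sine kernel.
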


%***************************************************************
\subsection{Overall Strategy}

We analyze the random variable $Z_n$ indirectly.  Let $\G$ count
the number of eigenvalue pairs whose average is in
$I_n$ and whose separation is at most $\gamma$.  For example, if
$\lambda_1<\lambda_2<\lambda_3<\lambda_4 <
\lambda_1 + \gamma$, then this contributes $6$ pairs to $\G$.
For each $\gamma$ we analyze the random variable $\G$ in detail,
but what interests us most is the probability $\Pr(\G = 0)$:
\begin{eqnarray*}
  (Z_n>\gamma) & \iff & (\G = 0) \\
  \Pr(Z_n>\gamma) & = & \Pr(\G = 0)
\end{eqnarray*}

Since $\G$ is an integer valued random variable, $\Pr(\G=0)$ is
accessible from the moments of $\G$. The following Theorem says
that the moments of $\G$ are approximately the moments of a
Poisson distribution.

\begin{thm}[Moment Estimation Theorem, universal version] \label{METUUE}
  Let $V(x)$ be a real analytic potential which is regular and
  whose equilibrium measure $\Psi(x) dx$ is supported on a single interval
  $[a,b]$.  Fix $\epsilon > 0$.  For each $n$, let
  $I_n \subset [a+\epsilon,b-\epsilon]$ be an interval
  contained in the bulk distribution of eigenvalues.

  For each $n$, let $\gamma_n>0$.
  Let $\G$ be the random variable which counts the number of $UUE_n$ eigenvalues
  whose average is in $I_n$ and whose difference is at most
  $\gamma_n$.  Let $G_{\mu}$ be the Poisson distribution with mean
  \begin{equation*}
    \mu_n = \frac{\pi^2\gamma_n^3 n^4}{9} \int_{I_n} \Psi(x)^4 dx.
  \end{equation*}
  Then for all $k\geq 1$, as $n\to \infty$,
  \begin{equation*}
    E(\G^k) = E(G_{\mu}^k) \left(1+ \bigO\left(n^{-1} + \gamma^2n^2
        + (\gamma^2n^2)^3 + (n|I|)^{-\frac23} \right) \right).
  \end{equation*}
  The constant implied by $\bigO$ depends only on $k$, the potential
  $V(x)$, and $\epsilon$.
\end{thm}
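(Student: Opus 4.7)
\medskip\noindent\emph{Proof plan.}
The plan is to compute the factorial moments $E[\G^{(k)}] := E[\G(\G-1)\cdots(\G-k+1)]$, which are more naturally expressible via the correlation functions of $UUE_n$, and then convert to ordinary moments via Stirling numbers. Since $E[G_\mu^k] = \sum_{j=0}^{k} S(k,j)\mu^j$ for a Poisson random variable and $E[\G^k] = \sum_{j=0}^{k} S(k,j) E[\G^{(j)}]$ for any non-negative integer-valued variable, the theorem reduces to showing $E[\G^{(j)}] = \mu_n^j(1+\bigO(\cdots))$ for each $1\le j\le k$, with the same error factor as stated.

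Write $\G = \sum_{i<l} X_{il}$, where $X_{il}$ indicates that $\{\lambda_i,\lambda_l\}$ is a close pair with average in $I_n$. Expanding the factorial moment and grouping the resulting terms by the multigraph on the $\le 2k$ indices involved, the dominant contribution comes from configurations where all $2k$ indices are distinct and form $k$ disjoint pairs; up to a combinatorial factor absorbed into the correlation formula this equals
\begin{equation*}
  \int\prod_{r=1}^{k}\mathbf{1}\bigl[(x_r,y_r)\text{ is a close pair with average in }I_n\bigr]\, R_{2k}(x_1,y_1,\ldots,x_k,y_k)\,dx_1\, dy_1\cdots dx_k\, dy_k.
\end{equation*}
Using $R_{2k}=\det[K_n(\cdot,\cdot)]$ with $K_n$ the Christoffel--Darboux kernel, together with the bulk asymptotics of $K_n$ (the sine kernel for CUE; the Deift--Kriecherbauer--McLaughlin--Venakides--Zhou asymptotics for universal $V$), the permutations preserving the pairing yield $\prod_r R_2(x_r,y_r)$, and the one-pair integral $\tfrac{1}{2}\int R_2(x,y)\mathbf{1}[\text{close pair}]\,dx\,dy$ evaluates to $\mu_n(1+\bigO(n^{-1}+\gamma_n^2 n^2+(n|I_n|)^{-2/3}))$ via the change of variables $u=(x+y)/2$, $v=x-y$ and the expansion $1-(\sin t/t)^2 = t^2/3+\bigO(t^4)$ with $t=\pi n\Psi(u)v$. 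Here the $n^{-1}$ comes from the DKMVZ correction to the kernel (vacuous for CUE), the $\gamma_n^2 n^2$ from the Taylor remainder, and the $(n|I_n|)^{-2/3}$ from pairs whose midpoint lies near $\partial I_n$ combined with the variation of $\Psi$ across $I_n$.

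The subleading contributions split in two. First, the non-pairing permutations in the determinantal expansion of $R_{2k}$ each contain at least one factor $K_n$ linking coordinates from two distinct pairs; the oscillatory off-diagonal decay of $K_n$ combined with the local cubic repulsion suppresses these by successive powers of $\gamma_n^2 n^2$, with the worst-case triple interaction producing the $(\gamma_n^2 n^2)^3$ error. Second, configurations with shared indices yield integrals of $R_m$ with $m<2k$ over the $\gamma$-close region, which by volume counting and repulsion are absorbed into the same error factors. I expect the main technical obstacle to be the careful organization of the determinantal expansion across the three relevant scales (tightly clustered pairs, intermediate separation, and well separated pairs), to produce exactly the $(\gamma_n^2 n^2)^3$ term and no worse; a secondary difficulty is the boundary analysis giving the $(n|I_n|)^{-2/3}$ error from pairs straddling $\partial I_n$, which requires a delicate estimate on how the close-pair indicator interacts with the kernel near the endpoints of $I_n$.
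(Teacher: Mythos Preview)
Your overall architecture---reduce to factorial moments, express via determinantal correlation functions, isolate the product of $R_2$'s as the main term, bound cross terms and index collisions as errors---is essentially the same route the paper takes (the paper's ``clean collapses, block diagonal'' are your pairing permutations; its ``mixed collapses'' are your shared-index configurations; and Lemma~\ref{momentCoefficients} is exactly the Stirling-number identity you invoke). However, you have misidentified the mechanisms behind two of the error terms, and the arguments you sketch for them would not go through.

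First, the $(n|I|)^{-2/3}$ is \emph{not} a boundary effect. There is no contribution in the paper from pairs whose midpoint lies near $\partial I_n$, and no variation-of-$\Psi$ argument is needed: the single-pair integral is exact in the midpoint variable over $I_n$. The $(n|I|)^{-2/3}$ arises entirely from the \emph{off-block-diagonal} terms in the $R_{2k}$ determinant---precisely the non-pairing permutations you mention---and it is obtained by splitting the domain according to whether two cluster midpoints are within distance $\alpha$ of each other. Near contributions cost a factor $\alpha/|I|$; far contributions are controlled via the derivative bound $K_n(x,y)=\bigO\bigl(|x-y|^{-1}n^k\bigr)$ after row and column operations, costing $(\alpha n)^{-2}$; balancing at $\alpha=(|I|/n^2)^{1/3}$ gives $(n|I|)^{-2/3}$. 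Your proposal that oscillatory decay of $K_n$ suppresses these cross terms ``by successive powers of $\gamma_n^2 n^2$'' is not correct: without the $\alpha$-split and the row/column subtraction, the off-diagonal blocks are not small in terms of $\gamma n$ at all.

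Second, the $(\gamma^2 n^2)^3$ does not come from any ``triple interaction''. It appears already in the \emph{first} moment $E(\G)$, from the high-order Taylor remainder $\bigO(y^8 c_4 c_4)=\bigO(|I|\gamma^9 n^{10})$ in the expansion of the $2\times2$ determinant; compare the paper's computation in Section~\ref{firstUUE}. The only errors generated by the higher-moment analysis are $(n|I|)^{-2/3}$ (off-block-diagonal, clean) and $(n|I|)^{-1}$ (mixed collapses), both dominated by $(n|I|)^{-2/3}$. So your plan to extract $(\gamma^2 n^2)^3$ from the determinantal cross terms is looking for it in the wrong place, and your proposed boundary analysis for $(n|I|)^{-2/3}$ is chasing a term that does not exist.
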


Proving the Moment Estimation Theorem is the major task of this
paper. First, let us assume the Moment Estimation Theorem and use
it to prove the Main Theorem.

%***************************************************************
\subsection{Proof that the Moment Estimation Theorem implies the Main Theorem}

In applying the Moment Estimation Theorem, we get to choose
$\gamma_n$.  We choose:
\begin{eqnarray*}
  \gamma_n & = & \beta \left( \frac{\pi^2 n^4}{9}
      \int_{I_n} \Psi(x)^4 dx \right)^{-\frac13} \\
  & \propto & n^{-\frac43}|I_n|^{-\frac13}.
\end{eqnarray*}
With this choice of $\gamma_n$, $\mu_n = \beta^3$ for all $n$.
Applying the Moment Estimation Theorem,
\begin{eqnarray*}
  E(\G) & = & E(G_{\beta^3}^k)
    \left( 1+ \bigO\left((n |I_n|)^{-\frac23} \right) \right).
\end{eqnarray*}
By assumption, $(n|I_n|) \to \infty$ as $n\to\infty$, so the
moments of $\G$ converge to those of $G_{\beta^3}$.  Thus,
\begin{equation*}
  \Pr(\G = 0) \to \Pr(G_{\beta^3} = 0) = e^{-\beta^3}
    \qquad \mbox{as} \qquad n\to\infty.
\end{equation*}
We retrace our steps back to the minimum spacing $Z_n$.  The
random variable $Z_n$ is greater than $\gamma_n$ if and only if
$\G=0$.  Substituting in the chosen value of $\gamma_n$, this is
equivalent to
\begin{equation*}
  Z_n \left(
          \frac{\pi^2 n^4}{9} \int_{I_n} \Psi(x)^4 dx
        \right)^{\frac13}
      > \beta,
\end{equation*}
completing the proof.

%***************************************************************
\subsection{Outline of Paper}

In the remainder of this paper we prove the Moment Estimation
Theorem.  We first prove the case of CUE because this case
is by far the easiest.  The novel calculations for CUE are
\begin{itemize}
  \item The combinatorial methods used enumerate the
    various contributions to the moment $E(\G)$.  Each contribution
    is expressed as an integral using the method of Gaudin.
  \item Calculation of the main contributions using asymptotics
    for the projection kernel $K_n(x,y)$ and its derivatives.
  \item Bounds for the undesired contributions, which are implied
    by bounds for $K_n(x,y)$ and its derivatives.
\end{itemize}
For $CUE$, the projection kernel is very easy to work with:
\begin{equation*}
  K_n(x,y) = \frac{1}{2\pi} \frac{e^{in(x-y)}-1}{e^{i(x-y)}-1}.
\end{equation*}
See Appendix~\ref{intoutCUEsubsec}.

We prove the UUE version of the Moment Estimation Theorem using exactly the
same techniques.  Parts of the proof are identical to the CUE case, and are
omitted.  The new feature the UUE case is that the projection
kernel $K_n(x,y)$ is less easy to work with:
\begin{eqnarray*}
  K_n(x,y) & = & \sum_{j=0}^{n} \eta_j(x) \eta_j(y) \\
  & = & (\mbox{const}) \frac{\eta_n(x)\eta_{n-1}(y) - \eta_n(y) \eta_{n-1}(x)}{x-y}.
\end{eqnarray*}
where $\phi_j(x)$ is the $(j)$th normalized orthogonal polynomial
with respect to the weight $e^{-nV(x)}$ and
$\eta_j(x) = e^{-\frac{n}{2}V(x)} \phi_j(x)$.

The asymptotics for the orthogonal polynomials $\phi_{n-1}$ and $\phi_n$
were obtained recently by Deift, Kreicherbauer, McLaughlin, Venakides and Zhou.
Based closely on~\cite{deiftzhou4} and~\cite{deift_book},
we outline the derivation of the leading order asymptotics.
By a very minor modification of these techniques,
we obtain the leading order asymptotics for the
derivatives of $\eta_{n-1}$ and $\eta_n$ -- which, as one would expect, are
the derivatives of the leading order asymptotics.

The case of the Gaussian unitary ensemble is of special interest.
The analogs of the Main Theorem and the Moment Estimation Theorem
could be obtained by rescaling the GUE and applying the universal
results for the potential $V(x) = x^2$.  We present an alternate proof of the GUE Theorems,
using Plancherel-Rotach asymptotics for
Hermite polynomials in place of the more general Deift-Zhou asymptotics
for orthogonal polynomials.

Given our results about the closest spacing for eigenvalues in an interval,
it is natural to ask about the distribution of the maximum spacing between
consecutive eigenvalues in some interval.  We have not studied this problem
in great detail, but believe it is more difficult than the minimum spacing
problem.  At least we have not been able to deal with it yet.
In section~\ref{MaxSpace},
we outline one possible approach to studying the maximum spacing.

We call the reader's attention to~\cite{edelman-condition} and to related
papers available on Edelman's web page.
In these paper Edelman finds the
correct scaling and distribution of the condition number
\begin{equation*}
  \frac{\sqrt{\sum |\lambda_j|^2}}{|\lambda_1|},
\end{equation*}
where $\lambda_1$ is the complex eigenvalue with smallest absolute value.
The condition number is an indicator of the difficulty of inverting a
matrix numerically.
Edelman considers the ensemble of matrices whose
real or complex entries are chosen independently from Gaussians.
Since the numerator is very tightly
distributed because of the law of large numbers, Edelman's result
concerns the distribution of the smallest eigenvalue for a matrix
in this ensemble.

%***************************************************************
\section{Case 1: The Circular Unitary Ensemble (CUE)}
%***************************************************************

We now state and prove the Moment Estimation Theorem for the circular unitary ensemble.
The Moment Estimation Theorem for the CUE case implies the Main Theorem for the CUE case,
which was already stated in the introduction.

\begin{thm}[Moment Estimation Theorem, CUE version] \label{METCUE}
  For each $n$, let $\gamma_n>0$ and $I_n \subset [0,2\pi)$.

  Let $\G$ be the random variable which counts the number of $CUE_n$ eigenvalues
  whose average is in $I_n$ and whose difference is at most
  $\gamma_n$.  Let $G_{\mu}$ be the Poisson distribution with mean
  \begin{equation*}
    \mu_n = \frac{|I_n| \gamma_n^3 n^4}{144 \pi^2}
  \end{equation*}
  Then for all $k\geq 1$, as $n\to \infty$,
  \begin{equation*}
    E(\G^k) = E(G_{\mu}^k)
      \left(1+ \bigO\left(
          n^{-1} + \gamma^2n^2 + (\gamma^2n^2)^3 + (n|I|)^{-\frac23}
        \right)
      \right).
  \end{equation*}
  The constant implied by $\bigO$ depends only on $k$.
\end{thm}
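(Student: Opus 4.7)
My proof plan is the method of moments, exploiting the determinantal structure of $CUE_n$. Since the eigenvalues form a determinantal point process with kernel $K_n$, the $m$-point correlation is $\rho_m(x_1,\ldots,x_m)=\det\bigl(K_n(x_a,x_b)\bigr)_{a,b=1}^m$. Because $\G$ is integer-valued, I would first reduce from ordinary to factorial moments via the Stirling identity $X^k=\sum_{\ell=1}^k S(k,\ell)\,X(X-1)\cdots(X-\ell+1)$, which applies to both $\G$ and $G_\mu$. It therefore suffices to show
\begin{equation*}
  E\bigl(\G(\G-1)\cdots(\G-\ell+1)\bigr)=\mu_n^\ell\bigl(1+\bigO(\cdots)\bigr) \qquad \text{for each } \ell=1,\ldots,k,
\end{equation*}
matching the $\ell$-th factorial moment of Poisson$(\mu_n)$, and then reassembling via Stirling numbers.

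Expanding the $\ell$-th factorial moment as a sum over ordered $\ell$-tuples of distinct unordered index-pairs and applying Gaudin's identity converts the quantity into a finite sum of integrals
\begin{equation*}
  \int \det\bigl(K_n(x_a,x_b)\bigr)_{a,b=1}^m \prod_{p=1}^\ell \mathbf{1}\bigl\{\tfrac12(x_{i_p}+x_{j_p})\in I_n,\,|x_{i_p}-x_{j_p}|\leq\gamma_n\bigr\}\,dx_1\cdots dx_m,
\end{equation*}
indexed by the incidence pattern of the pairs (a multigraph on $m$ vertices with $\ell$ edges, with $m$ varying up to the maximum $m=2\ell$). The dominant contribution is the matching pattern $m=2\ell$, in which all $\ell$ pairs are vertex-disjoint. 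For this term I would change variables from $(x_1,\ldots,x_{2\ell})$ to pair centers $u_p\in I_n$ and within-pair separations $s_p\in[-\gamma_n,\gamma_n]$. Using $|K_n(x,y)|\leq\min\bigl(n/(2\pi),\,C/|x-y|\bigr)$ from the closed form, I would restrict to the region where the centers $u_p$ are mutually separated by much more than $\gamma_n$. On this region the $2\ell\times 2\ell$ kernel matrix is nearly block diagonal and its determinant factors as $\prod_p R_2(u_p+s_p/2,\,u_p-s_p/2)$ with $R_2(x,y)=K_n(x,x)K_n(y,y)-|K_n(x,y)|^2$. A small-$s$ Taylor expansion yields $R_2(x,x-s)=(n^4s^2/48\pi^2)\bigl(1+O(n^2s^2)\bigr)$, and after integration and the combinatorial factor relating ordered matchings to unordered pairs, the leading contribution is exactly $\mu_n^\ell$.

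The main obstacle is the careful bookkeeping of the error terms, which must assemble into the stated $\bigO\bigl(n^{-1}+\gamma^2n^2+(\gamma^2n^2)^3+(n|I|)^{-2/3}\bigr)$. The sources are: (i) higher-order Taylor terms in $R_2$, producing $\gamma^2n^2$ relative errors per pair and, via the Leibniz expansion of the full $2\ell\times 2\ell$ determinant on the well-separated region, the higher power $(\gamma^2n^2)^3$; (ii) the exceptional region where two pair centers lie within $O(1/n)$, where factorization fails and the non-factored determinant contribution is controlled by the $C/|x-y|$ bound on $K_n$, producing the $(n|I|)^{-2/3}$ correction; (iii) the lower-incidence configurations ($m<2\ell$) where distinct pairs share a vertex, each a smaller-dimensional integral suppressed by factors of $\gamma_n n$ or $1/(n|I|)$; and (iv) the $n^{-1}$ term from finite-$n$ corrections in the Taylor expansion of $K_n$ and its derivatives. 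Each reduces to an explicit integral estimate using the closed form of $K_n$; the core technical work is verifying that these bounds combine, across all incidence patterns and all $\ell\le k$, precisely into the claimed error.
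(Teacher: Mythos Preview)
Your strategy is essentially the paper's: Gaudin's formula, a combinatorial decomposition by how the pairs overlap, Taylor expansion of the $2\times 2$ kernel determinant for the within-pair contribution, and the $|K_n(x,y)|\le C/|x-y|$ bound for the between-pair interactions. Organizing via factorial moments rather than expanding $\G^k$ directly into ``collapses'' is a harmless (and slightly cleaner) reshuffling of the same Stirling-number identity the paper uses at the end.

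There is, however, one concrete gap in your error analysis. In item (ii) you set the exceptional threshold for nearby pair-centers at $O(1/n)$ and assert this yields the $(n|I|)^{-2/3}$ correction. It does not. With cutoff $\alpha$, the off-block-diagonal contribution splits into a near piece of relative size $O(\alpha/|I|)$ and a far piece of relative size $O\bigl(1/(\alpha^2 n^2)\bigr)$; taking $\alpha=1/n$ makes the far piece $O(1)$, i.e.\ as large as the main term. The paper obtains $(n|I|)^{-2/3}$ only after balancing the two by choosing $\alpha=(|I|/n^2)^{1/3}$. You need this optimization step explicitly; without it the claimed error does not follow.

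A smaller misattribution: the $(\gamma^2 n^2)^3$ term does not come from the Leibniz expansion of the $2\ell\times 2\ell$ determinant. It arises already in the single-pair computation, from carrying the Taylor expansion of the $2\times 2$ determinant out to order $y^8$ (the $y^8 c_4^2$ term, integrated, gives $|I|\gamma^9 n^{10}$, which is $(\gamma^2 n^2)^3$ relative to $\mu$). The between-pair Leibniz terms are exactly what your item (ii) is meant to handle, and they produce only the $(n|I|)^{-2/3}$ piece once the threshold is chosen correctly.
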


Before proving this Theorem, we will
show that it agrees with heuristic predictions.

%***************************************************************
\subsection{Heuristic Prediction Based on the Consecutive Spacing Distribution}
\label{heuristicCUE}

We make the simplifying assumption that the consecutive spacings
are independent random variables, all chosen from the consecutive
spacing distribution for unitary ensembles.  This assumption of
independence is false, since adjacent consecutive spacings are
anti-correlated.  See p. 111 of~\cite{mehta} for a contour
plot of the joint probability density function of two adjacent
consecutive spacings.  Observe that short spacings tend to be
followed by longer ones and vice-versa.  However, we will see that
the independence assumption does lead to correct predictions for
the minimum spacing.

For the unitary ensembles, the consecutive spacing distribution vanishes to order
two at the origin, so that very short consecutive spacings are unlikely and the
eigenvalues are said to ``repel.''  See~\cite{mehta} Chapter~5 and Appendix~13.
For a power series expansion of this
density function at the origin, the first few terms are:
\begin{equation*}
  p(x) = \frac{\pi^2}{3} x^2 - \frac{2\pi^4}{45} x^4 + \frac{\pi^6}{315} x^6 - \dots
\end{equation*}

When a matrix is chosen from $U_n$, the mean spacing of the
eigenvalues is the constant $\frac{2\pi}{n}$, independent of $\theta$.
Thus a separation of $\gamma$ is
equal to $\left( \frac{n\gamma}{2\pi} \right)$ times the mean spacing.  When
$\left(\frac{n\gamma}{2\pi}\right)$ is small,
the probability of any one of these spacings being less
than $\gamma$ is about
$(\frac{\pi^2}{9})\left(\frac{n\gamma}{2\pi}\right)^3$.  The number of
consecutive spacings is about $\frac{n|I_n|}{2\pi}$, so the expected number
of consecutive spacings less than $\gamma$ is about
\begin{equation*}
  \mu = \frac{n|I_n|}{2\pi}
      \left(\frac{\pi^2}{9}\right)\left(\frac{n\gamma}{2\pi}\right)^3
    = \frac{|I_n| n^4 \gamma_n^3}{144 \pi^2}
\end{equation*}
Since $\G$ is a sum of many
independent unlikely events, it is approximately Poisson with mean
$\mu$.

%***************************************************************
\section{The First Moment of $\G$}
%***************************************************************

Recall that $\G$ is the number of GUE-n eigenvalue pairs whose
average is in $I_n$ and whose difference is less than $\gamma$.
Thus $\G$ is the symmetrization to $n$ variables of a function of
$2$ variables:
\begin{eqnarray*}
  g(u,t) = \left\{
    \begin{array}{cl}
     \frac12 & \mbox{if} \quad |u-t| < \gamma \\
       & \quad \mbox{and} \quad \frac{u+t}{2} \in I \\
     0 & \mbox{otherwise}
  \end{array}
  \right\}\\
  \G(t_1,t_2,\dots,t_n) = \sum_{i\neq j}g(t_i,t_j).
\end{eqnarray*}

Using Theorem~\ref{intoutCue}, the expected value of $\G$ is
\begin{eqnarray*}
  E(\G) & = & \int\int_{[0,2\pi)^2} g(u,t)
    \left[
      \begin{array}{c c}
        K_n(u,u) & K_n(u,t) \\
        K_n(t,u) & K_n(t,t)
      \end{array}
    \right]
    du dt \\
  & = & \frac12 \int\int_{\Omega}
    \left[
      \begin{array}{c c}
        K_n(u,u) & K_n(u,t) \\
        K_n(t,u) & K_n(t,t)
      \end{array}
    \right]
    du dt.
\end{eqnarray*}
In the above formula, the region $\Omega$ is the set where
$g_2(t_1,t_2) = \frac{1}{2}$:
\begin{equation*}
  \Omega = \left\{ (u,t) \quad \mbox{ s.t. } \quad
    \begin{array}{c}
      \frac{u+t}{2} \in I \\
      |u-t| < \gamma
    \end{array} \right\}.
\end{equation*}

The projection kernel $K_n(u,t)$ for $U_n$ is
\begin{eqnarray*}
  K_n(u,t) & = & \left(\frac{1}{2\pi} \right) \frac{e^{in(u-t)}-1}{e^{i(u-t)}-1} \\
  & = & K_n(u-t)
\end{eqnarray*}

Since $(t-u)$ is small in the region $\Omega$, we change variables and
expand the determinant as a power series.  We let $x=\frac{u+t}{2}$,
$y=\frac{u-t}{2}$, and $dxdy = \frac12 dudt$.
\begin{eqnarray*}
  E(\G) & = & \int_{I_n} \int_{-\frac12\gamma}^{\frac12\gamma}
    \left[
      \begin{array}{c c}
        K_n(0) & K_n(2y) \\
        K_n(-2y) & K_n(0)
      \end{array}
    \right]
    du dt \\
  & = & \int_{I_n} \int_{-\frac12\gamma}^{\frac12\gamma}
    \begin{array}{c}
        4y^2\left((K'(0))^2 - K(0) K''(0)\right) \\
        + \bigO(y^4c_0c_4 + y^5c_1c_4 + \dots + y^8c_4c_4)
    \end{array} dy dx,
\end{eqnarray*}
where $c_j$ is an upper bound for the $(j)$th derivative of $K_n$.
We apply the following Lemma:

\begin{lem} \label{derivativeBounds}
  Consider the kernel $\frac{1}{2\pi} \frac{e^{inx}-1}{e^{ix}-1}$.
  For $x > x_0 \geq {\frac1n} (\mbox{mod}\ 2\pi)$, the $(k)$th derivative of this
  kernel is
  \begin{equation*}
    \bigO\left(\frac{1}{x_0} n^k \right)
  \end{equation*}
  Without the restriction on $x$, the derivative is $\bigO(n^{k+1}$).
\end{lem}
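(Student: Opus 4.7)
The plan is to reduce both estimates to simple facts about the geometric sum
\[
  K_n(x) = \frac{1}{2\pi}\,\frac{e^{inx}-1}{e^{ix}-1} = \frac{1}{2\pi}\sum_{j=0}^{n-1} e^{ijx},
\]
valid whenever $e^{ix} \neq 1$. Differentiating term by term,
\[
  K_n^{(k)}(x) = \frac{i^k}{2\pi} \sum_{j=0}^{n-1} j^k e^{ijx},
\]
so everything will follow from size bounds on this trigonometric sum.

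For the unrestricted estimate I would simply take absolute values to get $|K_n^{(k)}(x)| \leq \frac{1}{2\pi}\sum_{j=0}^{n-1} j^k = O(n^{k+1})$, with no use of any smallness condition on $x$. That immediately handles the second assertion of the lemma.

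For the sharper estimate, under the hypothesis that $x$ is at distance at least $x_0$ from multiples of $2\pi$, I would apply Abel summation with $a_j = e^{ijx}$ and $b_j = j^k$. The partial sums $S_J = \sum_{j=0}^{J} e^{ijx} = (e^{i(J+1)x}-1)/(e^{ix}-1)$ satisfy $|S_J| \leq 2/|e^{ix}-1|$. By the $2\pi$-periodicity of $K_n$ we may assume $x \in (-\pi,\pi]$, and then $|e^{ix}-1| = 2|\sin(x/2)| \geq (2/\pi)|x| \geq (2/\pi)\,x_0$. Combining this with the rearrangement
\[
  \sum_{j=0}^{n-1} j^k e^{ijx} = (n-1)^k S_{n-1} - \sum_{j=0}^{n-2} \bigl((j+1)^k - j^k\bigr)\, S_j
\]
and the mean-value bound $(j+1)^k - j^k = O(j^{k-1})$, the right-hand side is $O(n^k/x_0)$, which gives $|K_n^{(k)}(x)| = O(n^k/x_0)$ as required.

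The side condition $x_0 \geq 1/n$ is exactly what makes the sharper bound no worse than the crude one, so the two estimates fit together consistently on their respective ranges. I do not foresee any serious obstacle: the argument is essentially a quantitative form of the classical Dirichlet-kernel bound, and the only care needed is in tracking the constants produced by the Abel summation and the elementary sine inequality.
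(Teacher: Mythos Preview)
Your proof is correct, and for the unrestricted $O(n^{k+1})$ bound it is exactly what the paper does: expand the kernel as the geometric sum $\frac{1}{2\pi}\sum_{j=0}^{n-1}e^{ijx}$ and differentiate term by term.

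For the sharper bound $O(n^k/x_0)$ your route differs from the paper's. The paper works with the quotient form $\frac{e^{inx}-1}{e^{ix}-1}$ and differentiates it symbolically, observing that after $j$ derivatives every term has the shape $\text{monomial}(n,e^{inx},e^{ix})/(e^{ix}-1)^l$; each further derivative either raises the power of $n$ in the numerator or raises $l$ by one, and since $1/|e^{ix}-1| = O(1/x_0) \leq O(n)$, an induction gives $O(n^k/x_0)$. You instead stay with the exponential-sum representation and bound $\sum_{j=0}^{n-1} j^k e^{ijx}$ by Abel summation against the partial sums of the Dirichlet kernel. Your argument is arguably cleaner here, since the telescoping $\sum_{j=0}^{n-2}\bigl((j+1)^k-j^k\bigr)=(n-1)^k$ makes the constant tracking transparent. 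The paper's symbolic-differentiation approach, on the other hand, is the template reused later for the GUE and UUE kernels (Lemmas~\ref{trickyLemma} and~\ref{derivativeBoundsUniversal}), where no convenient exponential-sum form is available and one must track terms produced by differentiating the Christoffel--Darboux quotient; so while your method is tidier for this lemma, it does not generalize to those settings.
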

\begin{proof}
  Consider taking $k$ derivatives symbolically.  After taking $0\leq j\leq k$ of these
  derivatives, the result is a sum of terms of the form:
  \begin{equation*}
    \frac{\mbox{monomial}(n,e^{inx},e^{ix})}{(e^{ix}-1)^l}.
  \end{equation*}
  At each step, the worst that can happen is that either the exponent in the
  denominator increases by $1$, or the power of $n$ in the numerator increases
  by $1$.  Since we assumed that $x_0 \geq \frac1n$, the worse of these two outcomes
  is multiplying by $n$.  The result follows by induction, and observing that the
  number of terms is finite, depending only on $k$.

  For the bound without the restriction on $t$, express the kernel as a sum of complex
  exponentials and differentiate termwise.
\end{proof}

Returning to our expression for $E(\Fs)$,
\begin{eqnarray*}
  E(\Fs) & = & \int_{I_n} \int_{-\frac12\gamma}^{\frac12\gamma}
      4y^2\left((K'(0))^2 - K(0) K''(0)\right)
    + \bigO( y^4 n^6 + \dots + y^8 n^{10}  ) dydx \\
  & = & \frac{1}{4\pi^2} \int_{I_n} \int_{-\frac12\gamma}^{\frac12\gamma}
    4y^2 \left( -\frac{n^4}{4} + \frac{n^4}{3} +\bigO(n^3)  \right) dydx
    + \bigO(|I|\gamma^5 n^6 + \gamma^9n^{10}) \\
  & = & \frac{|I_n| \gamma^3 n^4}{144 \pi^2}
    \left( 1 + \bigO\left( \frac{1}{n} + \gamma^2n^2 + (\gamma^2n^2)^3 \right) \right).
\end{eqnarray*}
This agrees with the heuristic predictions in Subsection~\ref{heuristicCUE}.

%***************************************************************
\section{The Higher Moments of $\G$}
%***************************************************************

The $(k)$th power of $\G(t_1,t_2,\dots,t_n)$ may be written
\begin{eqnarray*}
  \G^k & = & \left( \sum_{i\neq j} g(t_i,t_j) \right)^k \\
       & = & \sum_{i_1\neq j_1;i_2\neq j_2;\dots;i_k\neq j_k} g(t_{i_1},t_{j_1})g(t_{i_2},t_{j_2})\dots g(t_{i_k},t_{j_k}),
\end{eqnarray*}
where
\begin{equation*}
  g(u,t) = \left\{
    \begin{array}{cl}
     \frac12 & \mbox{if} \quad |u-t| < \gamma \\
       & \quad \mbox{and} \quad \frac{u+t}{2} \in I \\
     0 & \mbox{otherwise}
  \end{array}
  \right\}
\end{equation*}
The indexing set for the sum on the right is not of the form required for Theorem~\ref{intoutCue}.
However it may be written as a disjoint union of indexing sets of the proper form.

%***************************************************************
\subsection{The Combinatorics of Collapses}

Let us, for example, partition the indexing set
$i_1\neq j_1;i_2\neq j_2;i_3\neq j_3$
into smaller indexing sets of the proper form.
We write this indexing set in shorthand as
\begin{equation*}
  (i_1|j_1)\wedge(i_2|j_2)\wedge(i_3|j_3),
\end{equation*}
and expand as follows.
Except for the subtle distinction between ``$|$'' and ``$\natural$'',
each step below should be clear.
\begin{eqnarray*}
  & & (i_1|j_1)\wedge(i_2|j_2)\wedge(i_3|j_3) \\
  & & \quad = (i_1|j_1)\left( (i_2|j_2\natural i_3|j_3) \vee (i_2,i_3|j_2|j_3) \vee (i_2,i_3|j_2,j_3)
    \vee (i_2,j_3|j_2|i_3) \vee (i_2,j_3|j_2,i_3) \right) \\
  & & \quad = \left( \left( (i_1|j_1\natural i_2|j_2 \natural i_3|j_3)
        \vee (i_1|j_1,i_2|j_2\natural i_3|j_3)
        \vee (i_1|j_1,j_2|i_2\natural i_3|j_3) \right. \right. \\
  & &   \qquad \qquad \vee \left. (i_2|j_1,i_3|j_3\natural i_2|j_2)
        \vee (i_1|j_1,j_3|i_3\natural i_2|j_2)
      \right) \\
  & & \qquad \vee \left( (i_1,i_2|j_1|j_2\natural i_3|j_3)
        \vee (i_1,i_2|j_1,j_2\natural i_3|j_3)
        \vee (i_1,i_2|j_1,i_3|j_2|j_3)
        \vee (i_1,i_2|j_1,j_3|j_2|i_3)
      \right) \\
  & & \qquad \vee \left( (i_1,j_2|j_1|i_2\natural i_3|j_3)
        \vee (i_1,j_2|j_1,i_2\natural i_3|j_3)
        \vee (i_1,j_2|j_1,i_3|i_2|j_3)
        \vee (i_1,j_2|j_1,j_3|i_2|i_3)
      \right) \\
  & & \qquad \vee \left( (i_1,i_3|j_1|j_3\natural i_2|j_2)
        \vee (i_1,i_3|j_1,i_2|j_2|j_3)
        \vee (i_1,i_3|j_1,j_2|i_2|j_3)
        \vee (i_1,i_3|j_1,j_3\natural i_2|j_2)
      \right) \\
  & & \qquad \vee \left. \left( (i_1,j_3|j_1|i_3\natural i_2|j_2)
        \vee (i_1,j_3|j_1,i_2|j_2|i_3)
        \vee (i_1,j_3|j_1,j_2|i_2|i_3)
        \vee (i_1,j_3|j_1,j_3\natural i_2|j_2)
      \right)
    \right) \\
  & & \qquad \vee \left(13 \mbox{ terms}\right)
        \quad \vee \quad \left(7 \mbox{ terms}\right)
        \quad \vee \quad \left(13 \mbox{ terms}\right) \\
  & & \qquad \vee \left( (i_1|j_1\natural i_2,j_3|j_2,i_3)
      \vee (i_1|j_1,i_2,j_3|j_2,i_3)
      \vee (i_1|j_1,j_2,i_3|i_2,j_3)
      \vee (i_1,i_3,j_3|j_1|j_2,i_3) \right. \\
  & & \qquad \qquad  \left. \vee (i_1,i_2,j_3|j_1,j_2,i_3)
      \vee (i_1,j_2,i_3|j_1|i_2,j_3)
      \vee (i_1,j_2,i_3|j_1,i_2,j_3)
    \right)
\end{eqnarray*}
Each term above is an indexing set of the form required for
Theorem~\ref{intoutCue}. We call such indexing sets ``collapses.''
Let us take one of the more complicated collapses above and
explain it's meaning:
\begin{equation*}
  (i_1|j_1,i_3|j_3 \natural i_2|j_2)
\end{equation*}
This means that $j_1=i_3$, but otherwise the indices
$i_1,j_1,i_2,j_2,i_3,j_3$ are distinct.  There are $53$ indexing sets
in the full expansion, but we only list $28$ of them above.
The symbols ``$|$'' and ``$\natural$'' both separate
the $i$'s and $j$'s into equivalence classes.  The symbol $\natural$ plays an additional
role which will not be clear until later in this section.
If $i_l$ and $j_l$ are in different equivalence classes, then
we say that these two equivalence classes fall into the same \emph{cluster}.
The clusters are separated by $\natural$.

Every cluster contains at least two equivalence classes. If any
cluster in a collapse contains more than two equivalence classes,
we call the collapse a {\it mixed collapse.}  If every cluster
contains exactly two equivalence classes we call it a {\it clean
collapse.}  We will show that the mixed collapses make a
negligible contribution to $E(\G^k)$.

There are $42$ mixed collapses for $k=3$, which we do not list explicitly.

The $11$ clean collapses for $k=3$ are:
\begin{eqnarray*}
  \left[ (i_1|j_1\natural i_2|j_2 \natural i_3|j_3) \right] & \iff & (1|2|3) \\
  \left[ (i_1,i_2|j_1,j_2\natural i_3|j_3)
    \vee (i_1,j_2|j_1,i_2\natural i_3|j_3) \right] & \iff & (12|3) \\
  \left[ (i_1,i_3|j_1,j_3\natural i_2|j_2)
    \vee (i_1,j_3|j_1,j_3\natural i_2|j_2) \right] & \iff & (13|2) \\
  \left[ (i_1|j_1\natural i_2,j_3|j_2,i_3)
    \vee (i_1|j_1\natural i_2,i_3|j_2,j_3) \right] & \iff & (1|23) \\
  \left[ \begin{array}{cccc}  & (i_1,i_2,j_3|j_1,j_2,i_3) & \vee & (i_1,j_2,i_3|j_1,i_2,j_3) \\
    \vee & (i_1,i_2,i_3|j_1,j_2,j_3) & \vee & (i_1,j_2,j_3|j_1,i_2,i_3)
  \end{array} \right] & \iff & (123)
\end{eqnarray*}
In the list above, we have collected those indexing sets together so that each
collection corresponds to one way of partitioning $\{1,2,3\}$.

The following Lemma is evident, based on consideration of the expansion
of $(i_1|j_1)\wedge(i_2|j_2)\wedge (i_3|j_3)$ into collapses which
we considered above.

\begin{lem} \label{countCollapses}
  When the indexing set $(i_1|j_1)\wedge(1_2|j_2)\wedge \dots \wedge (i_k|j_k)$
  is expanded into collapses, every cluster contains
  at least two equivalence classes.
  Clean collapses with $l$ clusters arise from a partition of the
  integers $\{1,2,\dots,k\}$ into $l$ nonempty subsets.
  Each of $l$ subsets of $\{1,2\dots,k\}$ leads to a cluster of
  equivalence classes of $\{i_1,j_1,i_2,j_2,\dots,i_k,j_k\}$.  Each
  partition of $\{1,2,\dots,k\}$ into $l$ equivalence classes
  corresponds to $2^{k-1}$ different clean collapses with $l$ clusters.
\end{lem}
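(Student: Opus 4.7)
The plan is to verify the three assertions of the lemma in sequence, since each follows from a direct combinatorial description of what a collapse is. For the first assertion---that every cluster must contain at least two equivalence classes---I would observe that the constraint $i_l\neq j_l$ built into the original indexing set $(i_1|j_1)\wedge\cdots\wedge(i_k|j_k)$ forces $i_l$ and $j_l$ into distinct equivalence classes in any collapse. By the definition of cluster given just above the lemma, whenever $i_l$ and $j_l$ lie in different equivalence classes, those two classes are thrown into the same cluster. Since every $i_l$ and every $j_l$ lies in some equivalence class, no equivalence class can be alone in its cluster.

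For the second assertion, I would set up the natural map from clean collapses to set partitions of $\{1,\ldots,k\}$: given a clean collapse, define the induced partition whose blocks are the sets $\{\,l : (i_l,j_l)\text{ falls into cluster } C\,\}$, one block per cluster $C$. This is well-defined because each pair $(i_l,j_l)$ contributes to exactly one cluster, and it sends $l$-cluster collapses to $l$-block partitions; the fact that collapses with $l$ clusters arise only this way is immediate from the previous point, since each cluster is determined by which pairs enter it.

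For the third assertion I would count the fiber above a fixed partition $\pi=S_1\sqcup\cdots\sqcup S_l$. Building a clean collapse over $\pi$ decouples across blocks: for each $S_j$, one must partition the $2|S_j|$ indices $\{i_m,j_m:m\in S_j\}$ into exactly two equivalence classes with $i_m$ and $j_m$ in opposite classes for every $m\in S_j$. Choosing for each $m\in S_j$ which of the two classes receives $i_m$ (forcing $j_m$ into the other) gives $2^{|S_j|}$ labelings, and dividing by the symmetry that swaps the two classes yields $2^{|S_j|-1}$ sub-collapses per block. Multiplying over $j$ gives $\prod_{j=1}^{l} 2^{|S_j|-1}=2^{k-l}$ clean collapses per partition, which matches the counts $1,2,2,2,4$ exhibited for $k=3$ (so the ``$2^{k-1}$'' in the lemma statement should be read as $2^{k-l}$).

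The main obstacle is not depth but notation: one must set up the bijection crisply enough that the passage between indexing sets written with ``$|$'' and ``$\natural$'' separators and set partitions of $\{1,\ldots,k\}$ decorated with a binary label per element is unambiguous. Once that correspondence is in place, the counting is an application of the product rule and the argument is essentially a bookkeeping exercise, verifiable against the explicit expansion already carried out for $k=3$.
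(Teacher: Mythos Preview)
Your proposal is correct. The paper does not actually supply a proof of this lemma; it merely declares the statement ``evident, based on consideration of the expansion of $(i_1|j_1)\wedge(i_2|j_2)\wedge(i_3|j_3)$ into collapses which we considered above.'' Your argument is precisely the natural counting one would write down to justify that claim, and it matches the $k=3$ data exhibited. You are also right that the ``$2^{k-1}$'' in the lemma statement is a typo for $2^{k-l}$: the paper itself uses the count $2^{k-l}$ when it invokes the lemma in the subsequent subsection on the main contribution from clean collapses.
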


We will see that clean collapses make the dominant contribution to
$E(\G^k)$.  For each clean collapse, the ``block diagonal'' id
the dominant contribution.  Using a combinatorial analysis, we'll
see that the sum over all clean collapse block diagonal terms is
approximately equal to $E(G_{\mu}^k)$, in agreement with Theorem~\ref{METCUE}.

%***************************************************************
\subsection{Estimating the clean collapses}

As an example, we select one of the clean collapses from $\G^3$,
use Theorem~\ref{intoutCue} to express its contribution to $E(\G^3)$ as
an integral, and estimate that integral.  We select:
\begin{equation*}
  (i_1,j_2|j_1,i_2 \natural i_3|j_3).
\end{equation*}
Making the substitutions $j_2=i_1$ and $i_2=j_1$, this indexing set makes the following
contribution to $E(\G^3)$:
\begin{equation*}
  \sum_{\begin{array}{c} i_1,j_1,i_3,j_3 \\ \mbox{distinct}\end{array}}
    g(t_{i_1},t_{j_1}) g(t_{j_1},t_{i_1}) g(t_{i_3},t_{j_3})
\end{equation*}
Since $g$ takes only the values $0$ or $\frac{1}{2}$ and is symmetric,
$g(t_{i_1},t_{j_1}) g(t_{j_1},t_{i_1}) = \frac12 g(t_{i_1},t_{j_1})$.
The contribution to $E(\G^3)$ simplifies to:
\begin{equation*}
  \frac12 \sum_{\begin{array}{c} i_1,j_1,i_3,j_3 \\ \mbox{distinct}\end{array}}
    g(t_{i_1},t_{j_1}) g(t_{i_3},t_{j_3})
\end{equation*}
This is the symmetrization of a function of four variables, so Theorem~\ref{intoutCue}
expresses the expected value of this sum as an integral involving four variables:

\begin{eqnarray*}
  & & E\left( \frac12 \sum_{\begin{array}{c} i_1,j_1,i_3,j_3 \\ \mbox{distinct}\end{array}}
    g(t_{i_1},t_{j_1}) g(t_{i_3},t_{j_3}) \right) \\
  & & \quad = \frac12 \int_{I^4} g(u_1,t_1) g(u_3,t_3)
    \left[4\times 4\right] du_1 dt_1 du_3 dt_3
\end{eqnarray*}
Notice that $g(t_{i_1},t_{j_1}) g(t_{i_3},t_{j_3})$ takes only the values $0$ and $\frac14$.
Instead of including $g(t_{i_1},t_{j_1}) g(t_{i_3},t_{j_3})$
in the integrand, we incorporate the constant $\frac14$ and restrict the region of integration:
\begin{equation*}
  \frac18 \int_{\Omega}
    \left[ \begin{array}{cccc}
      K_n(u_1,u_1) & K_n(u_1,t_1) & K_n(u_1,u_3) & K_n(u_1,t_3) \\
      K_n(t_1,u_1) & K_n(t_1,t_1) & K_n(t_1,u_3) & K_n(t_1,t_3) \\
      K_n(u_3,u_1) & K_n(u_3,t_1) & K_n(u_3,u_3) & K_n(u_3,t_3) \\
      K_n(t_3,u_1) & K_n(t_3,t_1) & K_n(t_3,u_3) & K_n(t_3,t_3)
    \end{array} \right]
    du_1 dt_1 du_3 dt_3
\end{equation*}
\begin{equation*}
  \Omega = \Omega_{I,(i_1,j_2|j_1,i_2 \natural i_3|j_3)}
  = \left\{ (u_1,t_1,u_3,t_3) \left|
    \begin{array}{c}
      \frac{u_1+t_1}{2} \in I \\
      |u_1 - t_1| < \gamma \\
      \frac{u_3+t_3}{2} \in I \\
      |u_3 - t_3| < \gamma
    \end{array} \right. \right\}.
\end{equation*}
We see for the first time the meaning of clustering the equivalence classes.  The variables
$u_1,t_1,u_3,t_3$ each represent one of the equivalence classes in $(i_1,j_2|j_1,i_2 \natural i_3|j_3)$.
The equivalence classes corresponding to $u_1$ and $t_1$ are in the same cluster and so the region of
integration $\Omega$ carries the restriction that $|u_1-t_1|$ is small.  Similarly,
$|u_3-t_3|$ is small because the equivalence classes $i_3$ and $j_3$ are in the same cluster.

%***************************************************************
\subsection{The main contribution from the clean collapses}

Of the $24 = 4!$ terms in the determinant, the ones which make a significant contribution to the
integral are the four terms on the block diagonal.  Their contribution is:
\begin{eqnarray*}
  & & \frac{1}{8}\int_{\Omega}
    \left[ \begin{array}{cccc}
      K_n(u_1,u_1) & K_n(u_1,t_1) & 0 & 0 \\
      K_n(t_1,u_1) & K_n(t_1,t_1) & 0 & 0 \\
      0 & 0 & K_n(u_3,u_3) & K_n(u_3,t_3) \\
      0 & 0 & K_n(t_3,u_3) & K_n(t_3,t_3)
    \end{array} \right] du_1 dt_1 du_3 dt_3 \\
  & = & \frac12 \left( \frac12 \int_{\Omega_{I,(i_1|j_1)}}
    \left[ \begin{array}{cc} K_n(u,u) & K_n(u,t) \\
      K_n(t,u) & K_n(t,t) \end{array} \right]
    du dt \right)^2 \\
  & = & \frac12 E(\G)^2 \\
  & = & \frac{1}{2} \left( \frac{|I_n| n^4 \gamma^3}{144\pi^2} \right)^2
    \left(1 + \bigO\left(\frac1n + n^2\gamma^2 + (n^2\gamma^2)^3 \right) \right)
\end{eqnarray*}

For a moment, let us take for granted that the block-diagonal terms of the clean collapses
are the dominant contributions to $E(\G^k)$.  Using identical techniques to the ones
above, the block diagonal terms of a clean collapse of $(i_1|j_1)\wedge(1_2|j_2)\wedge \dots \wedge (i_k|j_k)$
into $l$ clusters contributes
\begin{equation*}
  \left(\frac{1}{2}\right)^{k-l} E(\G)^l.
\end{equation*}
According to Lemma~\ref{countCollapses}, for every partition
of $\{1,2,\dots,k\}$ into $l$ equivalence classes there correspond
$2^{k-l}$ clean collapses with $l$ clusters.
Thus each partition of $\{1,2,\dots,k\}$ into $l$ equivalence classes
contributes $E(\G)^l$ to the expected value $E(\G^k)$.

In the following Subsection, we show that each partition
of $\{1,2,\dots,k\}$ contributes $\mu^l$ to the moment $E(G_{\mu}^k)$.  Taking for
granted that the block diagonal terms of clean collapses are the main
contribution and expecting other error terms to sneak in,
we have recovered the conclusions of Theorem~\ref{METCUE}.

%***************************************************************
\subsection{Moments of the Poisson Distribution}

The moments of a Poisson distribution
are given by polynomials in the
first moment with nonnegative integer coefficients:
\begin{eqnarray*}
  E(\G)   & = & \mu \\
  E(\G^2) & = & \mu + \mu^2 \\
  E(\G^3) & = & \mu + 3\mu^2 + \mu^3 \\
  E(\G^4) & = & \mu + 7\mu^2 + 6\mu^3 + \mu^4 \\
  E(\G^i) & = & \sum_{j=1}^{i} a_{ij} \mu^j \\
\end{eqnarray*}

\begin{lem} \label{momentCoefficients}
  The $a_{i,k}$ satisfy the following recurrence relation and initial conditions:
  \begin{eqnarray*}
    a_{1,1} & = & 1 \\
    a_{1,k} & = & 0 \qquad \mbox{for}\ k\neq 1 \\
    a_{i,k} & = & k a_{i-1,k} + a_{i-1,k-1} \qquad \mbox{ for } i>1.
  \end{eqnarray*}
  The $a_{i,j}$ have a combinatorial description: $a_{i,k}$ is the number of
  ways of partitioning $\{ 1,2,3,\dots,i\}$ into $k$ nonempty subsets.
\end{lem}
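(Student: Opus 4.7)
The plan is to identify the coefficients $a_{i,k}$ with the Stirling numbers of the second kind $S(i,k)$, from which both the recurrence and the combinatorial interpretation are essentially immediate. The bridge between the Poisson moments and the Stirling numbers is the \emph{factorial moment} identity: if $G_\mu$ is Poisson with mean $\mu$, then $E\bigl[G_\mu(G_\mu-1)\cdots(G_\mu-k+1)\bigr] = \mu^k$. This is a one-line computation. Plugging in the Poisson p.m.f., the $k$-th falling factorial $n(n-1)\cdots(n-k+1)$ cancels the first $k$ factors of $n!$, and after reindexing $m=n-k$ the residual sum is $\sum_m e^{-\mu}\mu^m/m! = 1$, leaving the factor $\mu^k$.

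Next, I invoke the classical identity expressing ordinary powers in terms of falling factorials,
\begin{equation*}
  x^i \;=\; \sum_{k=1}^{i} S(i,k)\, x(x-1)\cdots(x-k+1),
\end{equation*}
which may be taken as the definition of $S(i,k)$. Applying this with $x=G_\mu$ and taking expectations, the factorial moment computation gives $E(G_\mu^i) = \sum_{k=1}^{i} S(i,k)\,\mu^k$. Matching against $E(G_\mu^i) = \sum_k a_{i,k}\mu^k$ and using that a polynomial in $\mu$ is determined by its values, I conclude $a_{i,k} = S(i,k)$.

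With $a_{i,k}=S(i,k)$ in hand, both remaining claims are standard. The combinatorial description follows by the classical bijective reading: when expanding $x^i$ as a sum of assignments of $i$ distinguishable objects into a pool of $x$ distinguishable boxes, each assignment factors through the partition of the objects determined by which of them land in the same box, and the contribution of a $k$-block partition is $x(x-1)\cdots(x-k+1)$. The recurrence is then proved by the one-line partition argument: in any partition of $\{1,\ldots,i\}$ into $k$ nonempty blocks, either $\{i\}$ is its own block (contributing $S(i-1,k-1)$) or $i$ is inserted into one of the $k$ blocks of a partition of $\{1,\ldots,i-1\}$ into $k$ blocks (contributing $k\,S(i-1,k)$). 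The initial conditions $a_{1,1}=1$ and $a_{1,k}=0$ for $k\neq 1$ simply record $E(G_\mu)=\mu$. There is no real obstacle here; the only substantive ingredient is the factorial-moment identity, and everything else is the standard dictionary between the two equivalent definitions of Stirling numbers of the second kind.
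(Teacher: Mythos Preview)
Your proof is correct. Both you and the paper hinge on the same factorial-moment identity $E[G_\mu(G_\mu-1)\cdots(G_\mu-k+1)]=\mu^k$, but the executions differ. You expand $x^i$ in falling factorials via the Stirling numbers $S(i,k)$, take expectations, and read off $a_{i,k}=S(i,k)$ directly; the recurrence and the combinatorial description are then inherited from the standard theory of $S(i,k)$. The paper instead goes the other way: it writes the falling factorials in terms of moments (coefficients $b^{j,i}$), \emph{defines} the $a_{i,k}$ by the recurrence in the lemma, and then verifies by a bare-hands double induction that $\sum_i b^{k,i}a_{i,j}=\delta^k_j$, i.e.\ that the two triangular arrays are mutually inverse. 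That computation is essentially a from-scratch proof that Stirling numbers of the first and second kind invert each other, whereas you simply quote the power-to-falling-factorial expansion and sidestep the inversion altogether. Your route is shorter and more conceptual; the paper's is more self-contained but does more work. Note also that the paper's proof, as written, does not separately argue the combinatorial description---it stops once the recurrence is tied to the moment coefficients---so your explicit bijective reading of $S(i,k)$ actually fills in a step the paper leaves implicit.
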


\begin{proof}
  Define the $a_{i,k}$ by the stated recurrence relation and initial conditions.

  Suppose that $X$ is Poisson with mean $\mu$.  Then
  \begin{equation*}
    \Pr(X=k) = \frac{\mu^k}{k!}e^{-\mu}.
  \end{equation*}
  Using the above formula one sees that
  \begin{equation*}
    \mu^j = E\left(X(X-1)(X-1)\dots(X-j+1) \right)
  \end{equation*}
  Let the coefficients $b^{ji}$ be the result of expanding the above in terms of moments:
  \begin{equation*}
    \mu^j = \sum_i b^{ji} E(\G^i)
  \end{equation*}
  The $b^{ji}$ are specified by the following recursion and initial conditions:
  \begin{eqnarray*}
    b^{1,1} & = & 1 \\
    b^{1,i} & = & 0 \qquad \mbox{for}\ j\neq 1 \\
    b^{j,i} & = & b^{j-1,i-1} - (j-1) b^{j-1,i}
  \end{eqnarray*}

  The following is true for $k=1, j\geq 1$, and for $k\geq1,j=1$:
  \begin{equation*}
    \sum_i b^{k,i} a_{i,j} = \delta^k_j
  \end{equation*}
  Assume for the purposes of induction that the above is true for
  all smaller values of $j$ and $k$.  The following symbolic manipulations,
  which use the Einstein summation convention, comprise the inductive step.

  \begin{eqnarray*}
    b^{j,i}a_{i,k} & = & \left(b^{j-1,i-1} - (j-1)b^{j-1,i}\right) a_{i,k}
      \left(k a_{i-1,k} + a_{i-1,k-1} \right) \\
    & = & b^{j-1,i-1} \left(k a_{i-1,k} + a_{i-1,k-1}\right) - (j-1)b^{j-1,i}a_{i,k} \\
    & = & b^{j-1,i-1}a_{i-1,k-1} + k b^{j-1,i-1}a_{i-1,k} - (j-1)\delta^{j-1}_k \\
    & = & \delta^{j-1}_{k-1} + k\delta^{j-1}_k - (j-1)\delta^{j-1}_k \\
    & = & \delta^{j-1}_{k-1} \\
    & = & \delta^j_k
  \end{eqnarray*}

  We have now shown that
  \begin{equation*}
    \sum_{i=1}^{k} b^{k,i} a_{i,j} = \delta(j=k)
  \end{equation*}
  for all $k\geq 1$.
  This equation uniquely determines the $a_{i,j}$.  This equation is
  satisfied if and only if
  \begin{eqnarray*}
    \mu^k & = & \sum b^{k,i} m_i \\
          & = & E\left(X(X-1)(X-1)\dots(X-k+1) \right),
  \end{eqnarray*}
  where   $ m_i = \sum_{j=1}^{i} a_{ij} \mu^j$.
\end{proof}

%***************************************************************
\section{The Errors Effecting Higher Moments of $\G$}
%***************************************************************

There are two types of contributions to $E(\G^k)$ which we have not yet considered.
We have yet to consider the off block diagonal terms of the clean collapses, and
the mixed collapses.  Since the block diagonal terms of clean collapses
contribute approximately $E(G_{\mu}^k)$, which is what we want,
these remaining terms are ``errors''.  Our main tool in bounding these
errors will be Lemma~\ref{derivativeBounds}.

%***************************************************************
\subsection{Controlling the Off Block Diagonal Terms}

We now estimate the off block diagonal terms.  For this, we
perform row and column operations on the matrix which leave the
determinant unchanged.  Specifically, we subtract each odd column
from the following even column, and subtract each odd row from the
following even row. In addition to leaving the determinant
unchanged, this operation does not effect which terms in the
determinant are on the block diagonal and which are off block
diagonal.
\begin{eqnarray*}
  & & \frac18 \int_{\Omega}
    \left[ \begin{array}{cccc}
      \frac{n}{2\pi} & K_n(u_1,t_1) & K_n(u_1,u_3) & K_n(u_1,t_3) \\
      K_n(t_1,u_1) & \frac{n}{2\pi} & K_n(t_1,u_3) & K_n(t_1,t_3) \\
      K_n(u_3,u_1) & K_n(u_3,t_1) & \frac{n}{2\pi} & K_n(u_3,t_3) \\
      K_n(t_3,u_1) & K_n(t_3,t_1) & K_n(t_3,u_3) & \frac{n}{2\pi}
    \end{array} \right]
    du_1 dt_1 du_3 dt_3 \\
  & & = \frac18 \int_{\Omega}
    \left[ \begin{array}{cc}
      \frac{n}{2\pi} & K_n(u_1,t_1) - \frac{n}{2\pi}                                             \\
      K_n(t_1,u_1) - \frac{n}{2\pi} & \frac{2n}{2\pi} - K_n(u_1,t_1) - K_n(t_1,u_1)              \\
      K_n(u_3,u_1) & K_n(u_3,t_1) - K_n(u_3,u_1)                                                 \\
      K(t_3,u_1) - K(u_3,u_1) & K(t_3,t_1) + K(u_3,u_1) -  K(u_3,t_1) - K(t_3,u_1)
    \end{array} \right. \\
  & &
    \left. \begin{array}{cc}
      K_n(u_1,u_3) & K_n(u_1,t_3)- K_n(u_1,u_3)                                               \\
      K(t_1,u_3) - K(u_1,u_3) & K(t_1,t_3) + K(u_1,u_3) - K(u_1,t_3) - K(t_1,u_3) \\
      \frac{n}{2\pi} & K_n(u_3,t_3) - \frac{n}{2\pi}                                          \\
      K_n(t_3,u_3) - \frac{n}{2\pi} & \frac{2n}{2\pi} - K_n(u_3,t_3) - K_n(t_3,u_3)
    \end{array} \right]
    du_1 \dots dt_3
\end{eqnarray*}

To estimate the individual entries in the matrix in the integrand,
we  use Lemma~\ref{derivativeBounds} in conjunction with
a Taylor series expansion of $K_n$.

%***************************************************************
\subsection{Dividing $\Omega$ into Two Regions}

We first divide the region $\Omega$ into two regions.
In the first region $\Omega_{\alpha}$, at least one pair of the variables $u_1$ and $u_3$ are closer than $\alpha$, where
$\alpha >> (\frac{1}{n})$.  In the latter region $\Omega_{\bar{\alpha}}$,
no pair of variables among $u_1$ and $u_3$ are closer than $\alpha$:
\begin{eqnarray*}
  \Omega_{\alpha}
    & = & \left\{ (u_1,t_1,u_3,t_3) \left|
      \begin{array}{c}
        \frac{u_1+t_1}{2} \in I \\
        |u_1 - t_1| < \gamma \\
        \frac{u_3+t_3}{2} \in I \\
        |u_3 - t_3| < \gamma \\
        |u_1 - u_3| < \alpha
      \end{array} \right. \right\} \\
  \Omega_{\bar{\alpha}}
    & = & \left\{ (u_1,t_1,u_3,t_3) \left|
      \begin{array}{c}
        \frac{u_1+t_1}{2} \in I \\
        |u_1 - t_1| < \gamma \\
        \frac{u_3+t_3}{2} \in I \\
        |u_3 - t_3| < \gamma \\
        |u_1 - u_3| \geq \alpha
      \end{array} \right. \right\}.
\end{eqnarray*}
The size of the region $\Omega_{\alpha}$ is $\bigO(|I|\alpha
\gamma^2)$.  Using Lemma~\ref{derivativeBounds} in conjunction
with the power series expansions of $K_n$, the entries in the
matrix are:
\begin{equation*}
\frac18 \int_{\Omega_{\alpha}}
  \left[ \begin{array}{cccc}
      \bigO(n)         & \bigO(n^2\gamma)    & \bigO(n)         & \bigO(n^2\gamma)   \\
      \bigO(n^2\gamma) & \bigO(n^3\gamma^2)  & \bigO(n^2\gamma) & \bigO(n^3\gamma^2) \\
      \bigO(n)         & \bigO(n^2\gamma)    & \bigO(n)         & \bigO(n^2\gamma)   \\
      \bigO(n^2\gamma) & \bigO(n^3\gamma^2)  & \bigO(n^2\gamma) & \bigO(n^3\gamma^2)
    \end{array} \right]
    du_1 dt_1 du_3 dt_3
\end{equation*}
Each term in the determinant, and hence the determinant itself, is $\bigO(n^8\gamma^4)$.  Multiplying this
by the size of the region of integration yields the total contribution of $\bigO(n^8 |I| \alpha \gamma^6)$
of off block diagonal terms in the region $\Omega_{\alpha}$.

The size of the region $\Omega_{\bar{\alpha}}$ is
$\bigO(|I|^2\gamma^2)$.  Again using Lemma~\ref{derivativeBounds},
the terms in the matrix are:
\begin{equation*}
  \frac18 \int_{\Omega_{\bar{\alpha}}}
    \left[ \begin{array}{cccc}
      \bigO(n)                      & \bigO(n^2\gamma)                  & \bigO(\frac{1}{\alpha})       & \bigO(\frac{n\gamma}{\alpha})     \\
      \bigO(n^2\gamma)              & \bigO(n^3\gamma^2)                & \bigO(\frac{n\gamma}{\alpha}) & \bigO(\frac{n^2\gamma^2}{\alpha}) \\
      \bigO(\frac{1}{\alpha})       & \bigO(\frac{n\gamma}{\alpha})     & \bigO(n)         & \bigO(n^2\gamma)   \\
      \bigO(\frac{n\gamma}{\alpha}) & \bigO(\frac{n^2\gamma^2}{\alpha}) & \bigO(n^2\gamma) & \bigO(n^3\gamma^2)
    \end{array} \right]
    du_1 dt_1 du_3 dt_3
\end{equation*}
The off block terms in the determinant above are $\bigO(\frac{n^6\gamma^4}{\alpha^2})$.
Multiplying this by the size of the region of integration yields $\bigO(\frac{n^6\gamma^6|I|^2}{\alpha^2})$.

Now we wish to choose $\alpha$ to minimize the total error from the off block diagonal terms.
We use the method of dominant balance to choose $\alpha = \left(\frac{|I|}{n^2}\right)^{\frac13}$,
so that the two contributions
from $\Omega_{\alpha}$ and $\Omega_{\bar{\alpha}}$ are comparable.  The resulting contribution
from the off block diagonal terms is
\begin{equation*}
  \bigO\left(     \left( \frac{1}{|I| n} \right)^{\frac23}      \left( n^4 |I| \gamma^3  \right)^2    \right).
\end{equation*}

Let us generalize the above example to other clean collapses.
Consider a clean collapse of $(i_1|j_1)\wedge(1_2|j_2)\wedge \dots \wedge (i_k|j_k)$
into $l$ clusters.  Using the same techniques as above, the contribution from
$\Omega_{\alpha}$ is $\bigO(n^{4l} |I|^{l-1} \alpha \gamma^{3l} )$.  The
contribution from $\Omega_{\bar{\alpha}}$ is
$\bigO\left( \left(\frac{1}{\alpha n}\right)^2 (|I| n^4\gamma^3)^l \right)$.
Using the method of dominant balance, we again choose
$\alpha = \left( \frac{|I|}{n^2} \right)^{\frac13}$.
The total contribution of the off-block-diagonal terms to a clean collapse
with $l$ clusters is
\begin{equation*}
  \bigO\left(     \left( \frac{1}{|I| n} \right)^{\frac23}      \left( n^4 |I| \gamma^3  \right)^{l}    \right).
\end{equation*}

%***************************************************************
\subsection{Controlling the Mixed Collapses}

Let us consider one of the mixed collapses from $\G^3$:
\begin{equation*}
  (i_1,j_2,i_3|j_1|i_2,j_3).
\end{equation*}
The contribution to $E(\G^3)$ from this mixed collapse may be written as an integral:

\begin{eqnarray*}
  & & E\left( \sum_{\begin{array}{c} i_1,j_1,i_2 \\ \mbox{distinct}\end{array}}
    g(t_{i_1},t_{j_1}) g(t_{i_2},t_{i_1}) g(t_{i_1},t_{i_2}) \right) \\
  & & \quad = \int_{I^3} g(u_1,t_1) g(u_2,u_1) g(u_1,u_2) \\
  & & \qquad \times \left[ \begin{array}{ccc}
      K_n(u_1,u_1) & K_n(u_1,t_1) & K_n(u_1,u_2) \\
      K_n(t_1,u_1) & K_n(t_1,t_1) & K_n(t_1,u_2) \\
      K_n(u_2,u_1) & K_n(u_2,t_1) & K_n(u_2,u_2)
    \end{array} \right]
    du_1 dt_1 du_2 \\
  & & \quad = \frac18 \int_{\Omega}
    \left[ \begin{array}{ccc}
      K_n(u_1,u_1) & K_n(u_1,t_1) & K_n(u_1,u_2) \\
      K_n(t_1,u_1) & K_n(t_1,t_1) & K_n(t_1,u_2) \\
      K_n(u_2,u_1) & K_n(u_2,t_1) & K_n(u_2,u_2)
    \end{array} \right]
    du_1 dt_1 du_2,
\end{eqnarray*}
where $\Omega$ is the region:
\begin{equation*}
  \Omega = \Omega_{I,(i_1,j_2,i_3|j_1|i_2,j_3)}
  = \left\{ (u_1,t_1,u_2) \left|
    \begin{array}{c}
      \frac{u_1+t_1}{2} \in I \\
      |u_1 - t_1| < \gamma \\
      \frac{u_1+u_2}{2} \in I \\
      |u_1 - u_2| < \gamma
    \end{array} \right. \right\}.
\end{equation*}

As before, we perform row and column operations.  For each cluster, one of the
equivalence classes is selected; in this case the equivalence class represented by
the variable $u_1$.  The $u_1$ column is then subtracted from the columns
corresponding to the other equivalence classes in its cluster.
Similarly, the $u_1$ row is subtracted from the others equivalence classes in its cluster.
We then use Lemma~\ref{derivativeBounds} to estimate the terms in the matrix.
\begin{eqnarray*}
  & & \frac18 \int_{\Omega}
    \left[ \begin{array}{cc}
      K_n(u_1,u_1)                 & K_n(u_1,t_1) - K_n(u_1,u_1)                 \\
      K(t_1,u_1) - K(u_1,u_1)  & K(t_1,t_1) + K(u_1,u_1) - K(u_1,t_1) - K(t_1,u_1) \\
      K(u_2,u_1) - K(u_1,u_1)  & K(u_2,t_1) + K(u_1,u_1) - K(u_1,t_1) - K(u_2,u_1)
    \end{array} \right. \\
  & & \qquad \left. \begin{array}{c}
      K_n(u_1,u_2) - K_n(u_1,u_1) \\
      K_n(t_1,u_2) + K_n(u_1,u_1) - K_n(u_1,u_2) - K_n(t_1,u_1) \\
      K_n(u_2,u_2) + K_n(u_1,u_1) - K_n(u_1,u_2) - K_n(u_2,u_1)
    \end{array} \right]
    du_1 dt_1 du_2 \\
  & & \quad = \frac18 \int_{\Omega}
    \left[ \begin{array}{ccc}
      \bigO(n)          & \bigO(n^2\gamma)    & \bigO(n^2\gamma)   \\
      \bigO(n^2\gamma)  & \bigO(n^3\gamma^2)  & \bigO(n^3\gamma^2) \\
      \bigO(n^2\gamma)  & \bigO(n^3\gamma^2)  & \bigO(n^3\gamma^2)
    \end{array} \right]
    du_1 dt_1 du_2.
\end{eqnarray*}
The size of each term in the determinant is $\bigO(n^7\gamma^4)$.  The size of the
region of integration is $\bigO(|I|\gamma^2)$, so the total contribution from
this mixed collapse is $\bigO(|I|n^7\gamma^6)$.

To generalize, consider a mixed collapse of $(i_1|j_1)\wedge(1_2|j_2)\wedge \dots \wedge (i_k|j_k)$
with $l_1$ equivalence classes and $l_2$ clusters.
The size of the region of integration is $\bigO(|I|^{l_2} \gamma^{l_1-l_2})$.  The size
of each term in the determinant is $\bigO(n^{l_1} (n\gamma)^{2(l_1-l_2)})$.  The total
contribution is
\begin{eqnarray*}
  & & \bigO(|I|^{l_2} \gamma^{3l_1 - 3l_2} n^{3l_1 - 2l_2}) \\
  & = & \bigO\left( \frac{\mu^{l_1-l_2}}{(n|I|)^{l_1-2l_2}} \right),
\end{eqnarray*}
where
\begin{equation*}
  \mu = \frac{|I|}{2\pi} \frac{n^4 \gamma^3}{72\pi}.
\end{equation*}

The mixed collapses of $(i_1|j_1)\wedge(1_2|j_2)\wedge \dots \wedge (i_k|j_k)$ have
the constraints $2 l_2 < l_1 < 2k$.  Thus $l_1 - 2l_2 \geq 1$.  Since $l_1\geq 2$,
\begin{equation*}
  l_1 - l_2 = \frac12 l_1 + \frac12 (l_1-2l_2) \geq \frac32 \geq 1.
\end{equation*}
Considering the expansion of $(i_1|j_1)\wedge(1_2|j_2)\wedge \dots \wedge (i_k|j_k)$,
one sees that $l_1 - l_2 \leq k$.  Since
\begin{equation*}
  1 \leq l_1 -l_2 \leq k,
\end{equation*}
the main contribution to $E(\G)$ contains contributions comparable to $\mu^{l_1-l_2}$.
Relative to the main term, the contribution from this, or any, mixed collapse is
\begin{equation*}
  \bigO\left( \frac{1}{n|I|} \right).
\end{equation*}

%***************************************************************
\section{Review of Case 1: The Circular Unitary Ensemble}
%***************************************************************

There are several sources of error in the estimation of $E(\G^k)$.
We summarize these sources in a table.  In this table,
$\mu = \frac{|I_n| n^4 \gamma^3}{144\pi^2}$.

\begin{tabular}{|l|c|c|}
  \hline
  description & contribution & relative size \\
  \hline
  \begin{tabular}{l} main term: \\ block diagonal clean collapse \end{tabular}
      & $\sum_{j=1}^{k} a_{k,j} \mu^j$ & 1  \\
  \hline
  \begin{tabular}{l} approximation $E(\G) \approx \mu$ \\ used to estimate main term \end{tabular}
      & $\mu^l \bigO\left( n^{-1} + n^2\gamma^2 + (n^2\gamma^2)^3\right)$
      & $\bigO\left(n^{-1} + n^2\gamma^2 + (n^2\gamma^2)^3\right)$  \\
  \hline
  \begin{tabular}{l} clean collapse with $l$ clusters, \\ off-block diagonal terms $\Omega_{\alpha}$ \end{tabular}
      & $\bigO\left(n^{4l}I^{l-1}\alpha \gamma^{3l}\right)$
      & $\bigO\left( \frac{\alpha}{I} \right)$ \\
  \hline
  \begin{tabular}{l} clean collapse with $l$ clusters, \\ off-block diagonal terms $\Omega_{\bar{\alpha}}$ \end{tabular}
      & $\bigO\left( \left(\frac{1}{\alpha n}\right)^2 (I n^4 \gamma^3 )^l  \right)$
      & $\bigO\left( \frac{1}{\alpha^2 n^2} \right)$ \\
  \hline
  \begin{tabular}{l} clean collapse with $l$ clusters, \\ total of off-block diagonal terms, \\
          choosing $\alpha = \left( \frac{I}{n^2}  \right)^{\frac13}$\end{tabular}
      & $\bigO\left( \left(\frac{1}{In} \right)^{\frac23} (I n^4 \gamma^3 )^l  \right)$
      & $\bigO\left( \left(\frac{1}{In} \right)^{\frac23} \right)$ \\
  \hline
  \begin{tabular}{l} mixed collapse, $l_1$ equivalence \\ classes, and $l_2$ clusters  \end{tabular}
      & $\bigO\left( \frac{\mu^{l_1-l_2}}{n|I|}  \right)$    & $\bigO\left( \frac{1}{n |I|} \right)$ \\
  \hline
\end{tabular}

%***************************************************************
\section{Case 2: Universal Unitary Ensembles}
%***************************************************************

Let $V(x)$ be a potential which is real analytic on $\RR$ and has
sufficient growth at $\infty$:
\begin{equation*}
  \lim_{|x|\to\infty} \frac{V(x)}{\log(x^2+1)} = \infty.
\end{equation*}

The universal unitary ensemble $UUE_n$, with potential $V(x)$, is
the set of $n\times n$ Hermitian matrices $M = (m_{ij})$ with
joint probability density function
\begin{equation*}
  (\mbox{const})\ \  e^{-n \sum V(\lambda_j)} dM
\end{equation*}
The joint probability density function for the eigenvalues is
obtained from the joint probability density function for the
matrix entries using Weyl integration.  See Appendix~\ref{WeylAppendix}.
\begin{equation*}
  (\mbox{const})\ \
  \prod_{i<j} (\lambda_i - \lambda_j)^2
  e^{-n\sum V(\lambda_i)} d\Lambda
\end{equation*}

As discussed in Section~\ref{potentialAssumptions}, we assume that
the potential is {\it regular} and that the equilibrium measure
$\Psi(x)dx$ is supported on a single interval $[a,b]$.

%***************************************************************
\subsection{Heuristic Prediction for the Universal Ensemble}

As in the case of CUE, we again make the
simplifying but false assumption that the consecutive spacings are
independent random variables. Thus $\G$, as a sum of many unlikely
events, will be approximately Poisson. We now estimate its mean.

At a point $x\in I$, the local density of eigenvalues is given by
the diagonal of the projection kernel:
\begin{equation*}
  \mbox{density} = K_n(x,x).
\end{equation*}

At the point $x$, a separation of $\gamma$ is equal to $\gamma
K_n(x,x)$ times the local mean spacing.  Thus the probability of
any one of these spacings being less than $\gamma$ is
approximately
\begin{equation*}
  \left(\frac{\pi^2}{9}\right)   \left(\gamma K_n(x,x)\right)^3.
\end{equation*}

In the region $[x,x+dx]$, the number of consecutive spacings is
approximately $K_n(x,x) dx$.  Integrating over $x\in I$, the
expected value of $\G$ is approximately:
\begin{equation*}
  E(\G) \approx \frac{\pi^2 \gamma^3}{9} \int_{I} K_n(x,x)^4 dx.
\end{equation*}
For $x\in [a+\epsilon,b-\epsilon]$, $K_n(x,x) = n\Psi(x) (1+\bigO(n^{-1}))$.
See~\cite{deiftzhou4}.
Thus, the heuristic prediction agrees with the Moment Estimation Theorem.

%***************************************************************
\section{The First Moment in the Universal Case}
\label{firstUUE}
%***************************************************************

As in the case of CUE, $\G$ is the symmetrization of a function of
$2$ variables:
\begin{eqnarray*}
  g_2(x,y) = \left\{
    \begin{array}{cl}
     \frac12 & \mbox{if} \quad \begin{array}{c} |x-y| < \gamma \\ \left( \frac{x+y}{2} \right) \in I_n \end{array} \\
     0 & \mbox{otherwise}
  \end{array}
  \right\}\\
  \G(t_1,t_2,\dots,t_n) = \sum_{i\neq j}g_2(t_i,t_j).
\end{eqnarray*}

Using the method of Gaudin (see Appendix~\ref{intoutapp}),
the expected value of $\G$ is
\begin{eqnarray*}
  E(\G) & = & \int\int_{\RR^2} g_2(u,t)
    \left[
      \begin{array}{c c}
        K_n(u,u) & K_n(u,t) \\
        K_n(t,u) & K_n(t,t)
      \end{array}
    \right]
    du dt \\
  & = & \frac12 \int\int_{\Omega}
    \left[
      \begin{array}{c c}
        K_n(u,u) & K_n(u,t) \\
        K_n(t,u) & K_n(t,t)
      \end{array}
    \right]
    du dt.
\end{eqnarray*}
In the above formula, the region $\Omega$ is the set where
$g_2(t_1,t_2) = \frac12$:
\begin{equation*}
  \Omega = \left\{ (u,t) \quad \mbox{ s.t. } \quad
    \begin{array}{c}
      \frac{u+t}{2} \in I \\
      |t - u| < \gamma
    \end{array} \right\}.
\end{equation*}

The projection kernel $K_n(x,y)$ is defined in Section~\ref{intoutapp}.

Compared to the $CUE_n$ case, it is now slightly more difficult
to estimate $E(\G)$ because the kernel $K_n(x,y)$ no longer
depends solely on the difference $(y-x)$.

The region $\Omega$ is narrow in the $(t-u)$ direction. This
suggests expanding the integrand in a Taylor series. Let
$x=\frac{u+t}{2}$ and $y=\frac{u-t}{2}$.  Then $dx dy = \frac12 du
dt$. In terms of the new variables $x$ and $y$, the first moment
is:
\begin{eqnarray*}
  & & E(\G) \\
  & & = \int_{I} \int_{-\frac12\gamma}^{\frac12\gamma}
    \left[
      \begin{array}{c c}
        K_n(x+y,x+y) & K_n(x+y,x-y) \\
        K_n(x-y,x+y) & K_n(x-y,x-y)
      \end{array}
    \right]
    dx dy \\
  & & = \int_{I} \int_{-\frac12\gamma}^{\frac12\gamma}
    \begin{array}{c}
       K_n(x+y,x+y) K_n(x-y,x-y) \\
       -  K_n(x+y,x-y) K_n(x-y,x+y)
    \end{array} dx dy \\
  & & = \int_{I} \int_{-\frac12\gamma}^{\frac12\gamma}
    \left(  \begin{array}{c}
        y^2 \left(K \Kff - K \Kbb - {\Kf}^2 \right) \\
        + \bigO(y^4c_0c_4 + y^5c_1c_4 + \dots + y^8c_4c_4)
    \end{array} \right)
    dx dy, \label{firstmoment}
\end{eqnarray*}
where
\begin{eqnarray*}
  \Kf(x,y)  & = & \Df K(x,y)\\
  \Kff(x,y) & = & \Df \Df K(x,y) \\
  \Kbb(x,y) & = & \Db \Db K(x,y)
\end{eqnarray*}
and $c_j$ is the maximum over $\Omega$ of any $(j)$th partial
derivative of $K_n(x,y)$. Using the Lemma~\ref{trickyLemma}, the
integral of $\bigO(y^4 c_0 c_4)$ over $\Omega$ is $\bigO\left(
|I_n| \gamma_n^5 n^6 \right)$, the integral of $\bigO(y^5c_1c_4)$ is
$\bigO(|I_n|\gamma^6n^7)$, and so on, and the integral of $\bigO(y^8c_4c_4)$
is $\bigO(|I_n|\gamma^9n^{10})$.  Dropping the redundant error terms,
\begin{eqnarray*}
  E(\G) & = & \int_I \frac{\gamma^3}{12}
      \left(K \Kff - K \Kbb - (\Kf)^2 \right) dx \\
  & & \quad + \bigO(|I_n|\gamma^5 n^6 + |I_n|\gamma^9n^{10}).
\end{eqnarray*}

We first evaluate the derivatives $\Kf$, $\Kff$, and $\Kbb$ symbolically
as a function of $x$ and $y$.  Let
\begin{equation*}
  [i,j,k] = b_{n-1} \frac{\eta_{n}^{(i)}(x)\eta_{n-1}^{(j)}(y)
      - \eta_{n}^{(j)}(y)\eta_{n-1}^{(i)}(x)}{(x-y)^k},.
\end{equation*}
where $\eta_j(x) = \phi_j(x) e^{-\frac{n}{2}V(x)}$, and $\phi_j$ is the $(j)$th
normalized orthogonal polynomial with respect to $e^{-nV(x)}dx$.
In terms of this new notation, we have the following expressions for $K(x,y)$
and its derivatives:
\begin{eqnarray*}
  K & = & [0,0,1] \\
  \frac{\partial}{\partial x} K & = & [1,0,1] - [0,0,2] \\
  \frac{\partial}{\partial y} K & = & [0,1,1] + [0,0,2] \\
  \frac{\partial^2}{\partial x^2} K & = & [2,0,1] - 2[1,0,2] + 2[0,0,3] \\
  \frac{\partial^2}{\partial x \partial y} K & = & [1,1,1] - [0,1,2] + [1,0,2] -2 [0,0,3] \\
  \frac{\partial^2}{\partial y^2} K & = & [0,2,1] + 2[0,1,2] + 2[0,0,3] \\
  \Kf & = & \left(\frac{\partial}{\partial x} + \frac{\partial}{\partial y} \right) K \\
    & = & [1,0,1] + [0,1,1] \\
  \Kff & = & \left( \frac{\partial^2}{\partial x^2}  + 2\frac{\partial^2}{\partial x \partial y}
        + \frac{\partial^2}{\partial y^2} \right) K \\
    & = & [2,0,1] + 2[1,1,1] + [0,2,1] \\
  \Kbb & = & \left( \frac{\partial^2}{\partial x^2}  + 2\frac{\partial^2}{\partial x \partial y}
        + \frac{\partial^2}{\partial y^2} \right) K \\
    & = & ([2,0,1] - 2[1,1,1] + [0,2,1]) + (-4[1,0,2] + 4[0,1,2]) + 8[0,0,3].
\end{eqnarray*}
We wish to evaluate the above quantities $\Kf$, $\Kff$, and $\Kbb$ along
the diagonal $x=y$.  At first we are taken aback by the presence of $(x-y)$ in the
denominator, but then recall that $K_n(x,y)$ is smooth everywhere, including the diagonal.
We determine the limiting values along the diagonal by use of L'Hopital's rule,
which in the case of $\Kbb$ must be applied three times:
\begin{eqnarray*}
  \Kf(x,x) & = & [200] + [110] \\
  \Kff(x,x) & = & [300] + 2[210] + [120] \\
  \Kbb(x,x) & = & (x-y)^2([2,0,3] - 2[1,1,3] + [0,2,3]) \\
    & & + (x-y)(-4[1,0,3] + 4[0,1,3]) + 8[0,0,3] \\
  & = & \frac13\Big( (x-y)^2([3,0,2]-2[2,1,2] + [1,2,2]) \\
    & & \quad + 2(x-y) ([2,0,2]-2[1,1,2] + [0,2,2]) \\
    & & \quad + (x-y)(-4[2,0,2]+4[1,1,2]) \\
    & & \quad + (-4[1,0,2] + 4[0,1,2]) + 8[1,0,2] \Big) \\
  & = & \frac13\left( [3,0,0] - 2[2,1,0] + [1,2,0] \right) \\
    & & + \frac13 \Big( (x-y) (-2[2,0,2] + 2[0,2,2]) + (4[1,0,2]+4[0,1,2])  \Big) \\
  & = & \frac13\left( [3,0,0] - 2[2,1,0] + [1,2,0] \right) \\
    & & + \frac16 \Big( (x-y)(-2[3,0,1] + 2[1,2,1]) \\
    & & \quad + (-2[2,0,1]+2[0,2,1]) \\
    & & \quad + (4[2,0,1]+4[1,1,1]) \Big) \\
  & = & \left( \frac23[2,1,0] + \frac23[1,2,0] \right)
    + \left(\frac13[2,0,1] + \frac23[1,1,1] + \frac13[0,2,1]\right) \\
  & = & \left( \frac23[2,1,0] + \frac23[1,2,0] \right)
    + (\frac13[3,0,0] + \frac23[2,1,0] + \frac13[1,2,0]) \\
  & = & \frac13[3,0,0] + [1,2,0].
\end{eqnarray*}

and substitute the results into the integrand:
\begin{eqnarray*}
  & & \left( K \Kff - K \Kbb - (\Kf)^2 \right) \\
  & & = \left(
        [1,0,0] \left([3,0,0]+[2,1,0]\right)
        - [1,0,0] \left(\frac13[3,0,0]-[2,1,0]\right)
        - [2,0,0]^2
      \right) \\
  & & = \left( \frac23[1,0,0][3,0,0] + 2[1,0,0][2,1,0] - [2,0,0]^2 \right).
\end{eqnarray*}

We derive estimates for $[3,0,0]$ and then state without proof the
analogous estimates for $[1,0,0]$, $[2,1,0]$, and $[2,0,0]$.
\begin{eqnarray*}
  & & [3,0,0] = \left( \frac{(b-a)}{4} + \bigO\left( \frac1n \right)\right)
      \left( \eta_n^{(3)}(x)\eta_{n-1}(x)
      - \eta_n(x) \eta_{n-1}^{(3)}(x) \right) \\
  & & \quad = \left( \frac{(b-a)}{4} \right)
    \left(n\pi\Psi(x)\right)^3 \frac{2}{(b-a)\pi} \\
  & & \qquad \times \left[
        \Re(-ie^{i\theta(x)} u(x)) \Re(e^{i\theta(x)} v(x))
        - \Re(e^{i\theta(x)} u(x)) \Re(-ie^{i\theta(x)} v(x))
        + \bigO\left(\frac1n\right)
      \right] \\
  & & \quad = \frac{\left(n\pi\Psi(x)\right)^3}{2\pi}
    |u(x)| |v(x)| \left[
        \sin(\theta+\alpha) \cos(\theta+\beta)
        - \cos(\theta+\alpha) \sin(\theta+\beta)
        + \bigO\left(\frac1n\right)
      \right] \\
  & & \quad = \frac{\left(n\pi\Psi(x)\right)^3}{2\pi}
    \left( \sqrt{\frac{b-x}{x-a}} + \sqrt{\frac{x-a}{b-x}} \right)
    \left( \sin(\alpha-\beta) +\bigO(n^{-1}) \right) \\
  & & \quad = \frac{\left(n\pi\Psi(x)\right)^3}{2\pi}
    \left( \sqrt{\frac{b-x}{x-a}} + \sqrt{\frac{x-a}{b-x}} \right)
    \left( \Im\left(\frac{u(x)}{v(x)}\right) +\bigO(n^{-1}) \right) \\
  & & \quad = \frac{\left(n\pi\Psi(x)\right)^3}{\pi}
    \left( 1 +\bigO(n^{-1}) \right).
\end{eqnarray*}
where
\begin{eqnarray*}
  \theta(x) & = & n\pi \int_x^b \Psi(s) ds \\
  u(x) & = & e^{i\frac{\pi}{4}} \left( \frac{b-x}{x-a} \right)^{\frac14}
    + e^{-i\frac{\pi}{4}} \left( \frac{x-a}{b-x} \right)^{\frac14} \\
  v(x) & = & e^{-i\frac{3\pi}{4}} \left( \frac{b-x}{x-a} \right)^{\frac14}
    + e^{-i\frac{\pi}{4}} \left( \frac{x-a}{b-x} \right)^{\frac14} \\
  \alpha(x) & = & \arg(u(x)) \\
  \beta(x) & = & \arg(v(x)).
\end{eqnarray*}

Using the same techniques,
\begin{eqnarray*}
  \ [2,1,0] & = & \frac{\left(n\pi\Psi(x)\right)^3}{\pi}
    \left( -1 +\bigO(n^{-1}) \right) \\
  \ [2,0,0] & = & \frac{\left(n\pi\Psi(x)\right)^2}{\pi}
    \left( 0 +\bigO(n^{-1}) \right) \\
  \ [1,0,0] & = & \frac{\left(n\pi\Psi(x)\right)}{\pi}
    \left( -1 +\bigO(n^{-1}) \right).
\end{eqnarray*}

Returning to our expression for $E(\G)$,
\begin{eqnarray*}
  & & E(\G) = \bigO(|I|\gamma^5 n^6 + |I|\gamma^9n^{10})
    + \int_I \frac{\gamma^3}{12}
      \left(K \Kff - K \Kbb - (\Kf)^2 \right) dx \\
  & & \quad = \bigO(|I|\gamma^5 n^6,|I|\gamma^9n^{10}) \\
  & & \qquad \quad + \int_I \frac{\gamma^3}{12}
      \frac{(n\pi \Psi(x))^4}{\pi^2}
      \left( \frac23(-1)(1) + 2(-1)(-1) - 0^2 + \bigO(n^{-1}) \right)
    dx \\
  & & \quad = \bigO(|I|\gamma^5 n^6 + |I|\gamma^9n^{10}) + \int_I
      \frac{\pi^2 \gamma^3 n^4 \Psi(x)^4}{9}
      \left( 1+\bigO(n^{-1}) \right) dx \\
  & & \quad = \frac{\pi^2\gamma^3 n^4}{9} \int_I \Psi(x)^4 dx
      \left( 1 + \bigO\left(n^{-1} + \gamma^2n^2 + (\gamma^2n^2)^3 \right)  \right).
\end{eqnarray*}

These calculations confirm that, for the first moment $E(\G)$, the
conclusions of Theorem~\ref{METUUE} are correct.  We now estimate the
higher moments.

%***************************************************************
\section{Higher Moments in the Universal Case}
%***************************************************************

As in the case of a random unitary matrix, we express $E(\G)$ as a
sum of several terms, with each term corresponding to a collapse
of
\begin{equation*}
  (i_1|j_1)\wedge(i_2|j_2)\wedge(i_3|j_3).
\end{equation*}
Using the method of Gaudin, each term is expressed as an integral
involving only a few variables.  As before, the main contribution
will come from the block-diagonal terms of the clean collapses:
\begin{eqnarray*}
  \mbox{(clean collapse, block diagonal)} & = &
      \sum_{j=1}^{k} a_{k,j} E(\G)^j \\
  & = & \sum_{j=1}^{k} a_{k,j} \mu^j \left( 1 +\bigO(n^{-1}\gamma^2 n) \right),
\end{eqnarray*}
where
\begin{equation*}
  \mu = \frac{\pi^2 \gamma^3 n^4}{9} \int_{I} \Psi(t)^4 dt.
\end{equation*}

The errors in this approximation have the same sources as in the
CUE case:
\begin{itemize}
  \item the approximation $E(\G) = \mu$ used in estimating the
    main term.
  \item clean collapse, off-block diagonal terms.
  \item mixed collapses.
\end{itemize}
The bounds for these errors are obtained using exactly the same reasoning
as in the CUE case.  The table from the CUE case applies here with only minor changes.
The only differences are that
\begin{itemize}
  \item The value of $\mu$ has changed.
  \item We now use Lemma~\ref{derivativeBoundsUniversal} instead of Lemma~\ref{derivativeBounds} to
    bound $K_n(x,y)$ and its derivatives.
\end{itemize}

%***************************************************************
\section{Controlling the Derivatives of $K_n(x,y)$ in the Universal Case}
%***************************************************************

For bounding the error terms for higher moments in the UUE case,
we require estimates for $K_n(X,y)$ and its derivatives.  In order
to apply the Christoffel-Darboux formula from
Lemma~\ref{chrisdar}, we approximate the coefficient $b_{n-1}$.
We refer the
reader to~\cite{deiftzhou4} and state the result and error term without proof.
This approximation is valid when the potential $V(x)$ is regular and the
equilibrium measure is supported on a single interval $[a,b]$.
\begin{eqnarray*}
  b_{n-1} & = & \intline x \phi_n(x) \phi_{n-1}(x) e^{-nV(x)} dx \\
  & = & \intline x \eta_n(x) \eta_{n-1}(x) dx \\
  & = & \int_a^b x \eta_n(x) \eta_{n-1}(x) dx + \bigO\left(\frac1n\right)\\
  & = & \int_a^b x \frac{2}{(b-a)\pi} \left(
        - \frac12 \left(\frac{b-x}{x-a}\right)^{\frac12}
        + \frac12 \left(\frac{x-a}{b-x}\right)^{\frac12}
      \right)dx + \bigO\left( \frac1n \right)\\
  & = & \frac{(b-a)}{4} + \bigO\left( \frac1n \right).
\end{eqnarray*}

The projection kernel is
\begin{equation*}
  K_n(x,y) = b_{n-1} \frac{\eta_n(x)\eta_{n-1}(y) - \eta_n(y)
  \eta_{n-1}(x)}{x-y}.
\end{equation*}

\begin{lem}\label{derivativeBoundsUniversal}
  Suppose that the potential $V(x)$ is regular and that it's equilibrium
  measure $\Psi(x)dx$ is supported on the interval $[a,b]$.
  Fix $\epsilon>0$ and let $R$ be the region
  \begin{equation*}
    [a+\epsilon,b-\epsilon]\times [a+\epsilon,b-\epsilon].
  \end{equation*}
  For $(x,y)\in R$, a mixed partial derivative of $K_n(x,y)$ of total degree
  $k\geq 0$ is $\bigO\left(n^{k+1}\right)$.  When $|x-y|\geq \frac1n$, we have
  the stronger bound $\bigO\left( \frac{1}{|x-y|} n^k \right)$.
\end{lem}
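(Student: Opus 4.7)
The plan is to deduce both bounds directly from the Christoffel--Darboux representation
\[
  K_n(x,y) = b_{n-1}\,\frac{\eta_n(x)\eta_{n-1}(y) - \eta_n(y)\eta_{n-1}(x)}{x-y},
\]
combined with the pointwise derivative estimates for $\eta_n$ and $\eta_{n-1}$ that are obtained by differentiating the Deift--Zhou asymptotics on the interval $[a+\epsilon,b-\epsilon]$. The key analytic input, which I will take as given (it is established in the parallel discussion of $[3,0,0]$, $[2,1,0]$, $[2,0,0]$, $[1,0,0]$ in Section~\ref{firstUUE}), is that for every fixed $m\ge 0$ there is a constant $C_m = C_m(V,\epsilon)$ with $|\eta_n^{(m)}(x)|, |\eta_{n-1}^{(m)}(x)| \le C_m n^m$ uniformly for $x\in[a+\epsilon,b-\epsilon]$, together with $b_{n-1} = O(1)$ shown just above the lemma.

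To get the universal bound $O(n^{k+1})$, I first remove the apparent singularity at $x=y$. Writing
\[
  \eta_n(x)\eta_{n-1}(y) - \eta_n(y)\eta_{n-1}(x)
   = \eta_{n-1}(x)\bigl(\eta_n(y)-\eta_n(x)\bigr) - \eta_n(x)\bigl(\eta_{n-1}(y)-\eta_{n-1}(x)\bigr)
\]
and using $\frac{f(y)-f(x)}{y-x} = \int_0^1 f'\bigl(x+t(y-x)\bigr)\,dt$, I obtain
\[
  K_n(x,y) = b_{n-1}\!\int_0^1\!\!\Bigl[\eta_n(x)\,\eta_{n-1}'\!\bigl(x+t(y-x)\bigr) - \eta_{n-1}(x)\,\eta_n'\!\bigl(x+t(y-x)\bigr)\Bigr]\,dt.
\]
Applying $\partial_x^a\partial_y^b$ with $a+b=k$, each derivative either differentiates an outer factor $\eta_{n-1}(x)$ or $\eta_n(x)$, or passes under the integral (where $\partial_x$ and $\partial_y$ produce harmless factors $(1-t)$ or $t$ and raise the order of the inner $\eta$-derivative by one). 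In every resulting term the total order of $\eta$-derivatives is at most $k+1$, so by the uniform estimate above the integrand is $O(n^{k+1})$; integration in $t\in[0,1]$ and the bound on $b_{n-1}$ give the stated $O(n^{k+1})$ on $R$.

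For the sharper bound when $|x-y|\ge 1/n$, I keep the original Christoffel--Darboux form and apply Leibniz directly. A typical term of $\partial_x^a\partial_y^b K_n$ has the schematic form
\[
  \frac{b_{n-1}\, p!}{(x-y)^{p+1}}\cdot\bigl(\text{product of }\eta\text{-derivatives of total order }k-p\bigr),
\]
where $p\in\{0,1,\dots,k\}$ counts how many derivatives fell on the $1/(x-y)$ factor. Using the uniform $\eta$-derivative estimates, such a term is bounded by
\[
  C\,\frac{n^{k-p}}{|x-y|^{p+1}} = \frac{C\,n^k}{|x-y|}\cdot\bigl(n|x-y|\bigr)^{-p} \le \frac{C\,n^k}{|x-y|},
\]
since $n|x-y|\ge 1$. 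The number of terms depends only on $k$, giving the claimed $O(n^k/|x-y|)$.

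The only nontrivial obstacle is the derivative estimates for $\eta_n,\eta_{n-1}$ themselves; everything after that is bookkeeping with Leibniz and an integral representation. Those pointwise estimates, however, are precisely what is produced by differentiating the leading-order Deift--Zhou formulas (the minor modification referred to in the Outline), which exhibit $\eta_n^{(m)}$ and $\eta_{n-1}^{(m)}$ as a bounded amplitude times $n^m$ times a bounded oscillatory factor on compact subsets of the bulk. Once these are in hand, the two estimates in Lemma~\ref{derivativeBoundsUniversal} follow from the two rewritings of $K_n(x,y)$ above.
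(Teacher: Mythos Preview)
Your argument is correct (modulo a harmless sign slip in the identity for $\eta_n(x)\eta_{n-1}(y)-\eta_n(y)\eta_{n-1}(x)$; the right-hand side as written is the negative of the left-hand side). The off-diagonal bound $\bigO(n^k/|x-y|)$ via Leibniz on the Christoffel--Darboux form is exactly what the paper does in its region $R_{bulk}$. Where you differ is the diagonal estimate: the paper differentiates the Christoffel--Darboux form symbolically, obtaining terms with $(x-y)^{-1-k_3}$ in the denominator, then Taylor-expands $\eta_{n-1}^{(k_2)}(y)$ about $x$ and argues that the apparently singular pieces must cancel identically because $K_n(x,y)=\sum_j\eta_j(x)\eta_j(y)$ is manifestly smooth across the diagonal. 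Your integral representation $K_n(x,y)=b_{n-1}\int_0^1[\cdots]\,dt$ removes the singularity at the outset and makes the $\bigO(n^{k+1})$ bound a direct derivative count, with no cancellation argument needed; this is cleaner and shorter. One small point worth making explicit: you need the convex combination $x+t(y-x)$ to stay in $[a+\epsilon,b-\epsilon]$ so that the $\eta$-derivative bounds apply to the inner factor, which is immediate since $(x,y)\in R$.
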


\begin{proof}
  Take $k$ partial derivatives of $K_n(x,y)$ symbolically.  The
  result is a finite number of terms of the form
  \begin{equation*}
    b_{n-1} c_{k_1,k_2,k_3} \frac{
        \eta_{n}^{(k_1)}(x)\eta_{n-1}^{(k_2)}(y)
        - \eta_{n}^{(k_2)}(y) \eta_{n-1}^{(k_1)}(x)
      }{(x-y)^{1+k_3}},
  \end{equation*}
  where $k_1+k_2+k_3=k$, the coefficients $c_{k_1,k_2,k_3}$ are
  integers which do not depend on $n$, and a superscript in
  parenthesis indicates multiple differentiation.

  We divide the region $R$ into two regions:
  \begin{eqnarray*}
    R_{diag} & = & \left\{ (x,y) \in R \left| |x-y| < \frac{1}{n}  \right. \right\} \\
    R_{bulk} & = & \left\{ (x,y) \in R \left| |x-y| \geq \frac{1}{n}  \right. \right\}
  \end{eqnarray*}

  In the region $R_{bulk}$, $\eta_n^{(k_1)}(x)$ and
  $\eta_{n-1}^{(k_1)}(x)$ are of size $\bigO(n^{k_1})$;
  $\eta_n^{(k_2)}(y)$ and $\eta_{n-1}^{(k_2)}(y)$ are of size
  $\bigO(n^{k_2})$; and $\frac{1}{(x-y)^{1+k_3}}$ is of size
  $\bigO\left(\frac{1}{|x-y|}n^{k_3} \right)$. The total
  contribution in the bulk region is then
  \begin{equation*}
    \bigO\left( \frac{1}{|x-y|} n^k \right).
  \end{equation*}

  The region $R_{diag}$ is more subtle.  Consider the term
  \begin{equation*}
    \frac{
        \eta_{n}^{(k_1)}(x)\eta_{n-1}^{(k_2)}(y)
        - \eta_{n}^{(k_2)}(y) \eta_{n-1}^{(k_1)}(x)
      }{(x-y)^{1+k_3}},
  \end{equation*}
  We treat both $\eta_{n}^{(k_1)}(x)\eta_{n-1}^{(k_2)}(y)$ and
  $\eta_{n}^{(k_2)}(y)\eta_{n-1}^{(k_1)}(x)$ the same way, so let us discuss
  $\eta_{n}^{(k_1)}(x)\eta_{n-1}^{(k_2)}(y)$.
  We expand $\eta_{n-1}^{(k_2)}(y)$ in a Taylor series centered at
  $x$.
  \begin{equation*}
    \eta_{n-1}^{(k_2)}(y) = \eta_{n-1}^{(k_2)}(x) + \dots
      +\frac{(y-x)^{k_3}}{k_3!}\eta_{n-1}^{(k_2+k_3)}(x)
      +\frac{(y-x)^{k_3+1}}{(k_3+1)!}\eta_{n-1}^{(k_2+k_3+1)}(\tilde{x}),
  \end{equation*}
  for some $\tilde{x}$ between $x$ and $y$.  Selecting the
  remainder term yields
  \begin{eqnarray*}
    & & \frac{ \eta_n^{(k_1)}(x) \frac{(y-x)^{k_3+1}}{(k_3+1)!}
        \eta_{n-1}^{(k_2+k_3+1)}(\tilde{x})  }{(y-x)^{1+k_3}} \\
    & & = \bigO(n^{k_1+k_2+k_3+1}) = \bigO(n^{1+k}),
  \end{eqnarray*}
  which is within the desired bound.
  Selecting one of the other terms in the expansion of
  $\eta_{n-1}^{(k_2)}(y)$ yields
  \begin{eqnarray*}
    & & \frac{ \eta_n^{(k_1)}(x) \frac{(y-x)^{j}}{j!}
        \eta_{n-1}^{(k_2+j)}(x)  }{(y-x)^{1+k_3}} \\
    & & = \frac{ \eta_n^{(k_1)}(x) \frac{1}{j!}
        \eta_{n-1}^{(k_2+j)}(x)  }{(y-x)^{1+k_3-j}},
  \end{eqnarray*}
  for some $0\leq j \leq k_3$.  Observe that this term is of the
  form
  \begin{equation*}
    e^{-nV(x)} \frac{p(x)}{(y-x)^{1+k_3-j}},
  \end{equation*}
  where $p(x)$ is a polynomial in $x$, and the exponent $1+k_3-j$
  in the denominator is positive.  Upon adding all such terms in
  the Taylor expansion, for every term in the symbolic
  differentiation of $K_n(x,y)$, the result is
  \begin{equation*}
    \sum_{j=0}^{k_3} e^{-nV(x)}
    \frac{p_j(x)}{(y-x)^{1+k_3-j}}.
  \end{equation*}
  Since $K_n(x,y) = \sum_{i=0}^{n-1} \eta_i(x) \eta_i(y)$, it is
  smooth along the diagonal $x=y$ and $p_j(x)$ is identically
  zero for $0\leq j \leq k_3$.
\end{proof}

%***************************************************************
\section{Asymptotics for Orthogonal Polynomials with General Weights}
%***************************************************************

We outline the derivation of the leading order asymptotics for
the $\eta_n(x)$ and $\eta_{n-1}(x)$, where
\begin{equation*}
  \eta_j(x) = \frac{\phi_j(x)}{e^{\frac{n}{2}V(x)}}
\end{equation*}
and  $\phi_j(x)$ is the $(j)$th
normalized orthogonal polynomial with
respect to the measure $e^{-nV(x)}$.

Our derivation follows the exposition of~\cite{deiftzhou4} and~\cite{deift_book}
very closely, and omits the proofs of several facts which are proven in
these articles.  The strongest results in this direction are proven
in~\cite{deiftzhou1} and~\cite{deiftzhou2}.
The only departure from their presentation comes when we obtain bounds for
the derivatives of $L(z)$, and then use these bounds to derive the
leading order asymptotics for the derivatives of
$\eta_n(x)$ and $\eta_{n-1}(x)$.

The leading order asymptotics of the derivatives of $\eta_n$
and $\eta_{n-1}$ turn out to be the derivatives of the leading
order asymptotics.  Of course one expects this, but it is not
something which can be taken for granted.

%***************************************************************
\subsection{Facts About the Equilibrium Measure of a Potential}
\label{potentialAssumptions}

Let $V(x)$ be a potential which is real analytic on $\RR$ and has
sufficient growth at $\infty$:
\begin{equation*}
  \lim_{|x|\to\infty} \frac{V(x)}{\log(x^2+1)} = \infty
\end{equation*}
This growth condition guarantees, in particular, that all the
moments of $e^{-nV(x)}$ are finite.

Given a probability measure $\mu$ on $\RR$, let $I^{V}$ be the
energy functional:
\begin{equation*}
  I^{V}(\mu) = \int_{\RR}\int_{\RR}
      \log|t-s|^{-1} d\mu(t) d\mu(s)
    + \int_{\RR} V(t) d\mu(t)
\end{equation*}

\begin{thm}
  Let $V$ and $I^{V}$ be as above.  Then there exists a unique
  probability measure $\mu = \mu^{V}$ such that
  \begin{equation*}
    E^{V} = \inf_{\mu\in M^1} I^{V}(\mu) = I^{V}(\mu^{V}),
  \end{equation*}
  where the infimum is over all probability measures on $\RR$.
  The equilibrium measure $\mu^{V}$ is compactly supported.
\end{thm}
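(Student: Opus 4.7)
The plan is to apply the direct method of the calculus of variations. The first step is to verify that $I^V$ is not identically $+\infty$ and is bounded below on probability measures; both follow from the elementary inequality $\log|t-s|^{-1} \ge -\log(1+|t|) - \log(1+|s|) - C$, which lets the negative part of the logarithmic kernel be absorbed into $V(t)+V(s)$ thanks to the super-logarithmic growth hypothesis on $V$.

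Next I would take a minimizing sequence $\{\mu_k\}$ with $I^V(\mu_k) \to E^V$. The growth condition on $V$ forces tightness of this sequence: if appreciable mass leaked to $|t| > R$, the term $\int V\, d\mu_k$ would grow without bound and overwhelm the logarithmic contribution, contradicting boundedness of $I^V(\mu_k)$. By Prokhorov I extract a weakly convergent subsequence with limit $\mu^V$. I would then prove lower semicontinuity of $I^V$ under weak convergence: the external-field part is lower semicontinuous by approximating $V$ from below by bounded continuous functions and invoking Fatou, while the logarithmic energy is lower semicontinuous because its kernel is bounded below and lower semicontinuous on $\mathbb{R}^2$ (the singularity at $t=s$ points toward $+\infty$, not $-\infty$), so monotone approximation by bounded continuous truncations together with the portmanteau theorem gives the desired inequality. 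This yields $I^V(\mu^V) \le \liminf I^V(\mu_k) = E^V$, establishing existence.

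For uniqueness I would use strict convexity. The bilinear form $(\mu,\nu) \mapsto \iint \log|t-s|^{-1}\, d\mu(t)\, d\nu(s)$ is strictly positive on nonzero finite signed measures of total mass zero and finite energy, a classical fact provable via the Fourier representation of $\log|\cdot|^{-1}$. Hence $\mu \mapsto \iint \log|t-s|^{-1}\, d\mu(t)\, d\mu(s)$ is strictly convex on probability measures; since $\mu \mapsto \int V\, d\mu$ is linear, $I^V$ is strictly convex, and two distinct minimizers along the segment $t\mu_1 + (1-t)\mu_2$ are ruled out.

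Compact support is the final step. The variational characterization yields the Euler--Lagrange conditions $2U^{\mu^V}(t) + V(t) = \ell$ on $\mathrm{supp}(\mu^V)$ and $\ge \ell$ elsewhere, where $U^{\mu^V}(t) = \int \log|t-s|^{-1}\, d\mu^V(s)$. As $|t| \to \infty$, $U^{\mu^V}(t) \sim -\log|t|$ while $V(t)/\log|t| \to \infty$, so $2U^{\mu^V}(t) + V(t) \to +\infty$; the equality $=\ell$ cannot hold outside a bounded set, forcing $\mathrm{supp}(\mu^V)$ to be compact. The main obstacle is the lower semicontinuity of the logarithmic double integral under weak convergence, owing to the unboundedness of the kernel near the diagonal; the standard remedy is the truncation-then-monotone-convergence argument sketched above.
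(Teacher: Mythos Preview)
Your proof is essentially correct and follows the standard route found in the potential-theory literature (Saff--Totik, Deift's book). However, the paper itself does \emph{not} prove this theorem: it is stated as a background fact in the subsection on equilibrium measures, with the surrounding text explicitly noting that the derivation ``follows the exposition of~\cite{deiftzhou4} and~\cite{deift_book} very closely, and omits the proofs of several facts which are proven in these articles.'' So there is no paper proof to compare against; you have supplied what the paper deliberately outsourced to its references.

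One minor organizational remark: you invoke the Euler--Lagrange conditions to establish compact support, but in the paper those conditions are stated as a \emph{subsequent} theorem, after the existence/uniqueness/compact-support result. This is not a mathematical error---the variational inequalities can be derived directly from minimality once $\mu^V$ is known to exist and have finite energy---but if you want to match the paper's logical order, you could instead argue compact support directly: were $\mu^V$ to place mass $\varepsilon>0$ on $\{|t|>R\}$ for arbitrarily large $R$, transporting that mass to a fixed compact set would strictly lower $I^V$ (the gain in the $\int V\,d\mu$ term dominates any loss in the logarithmic energy by the growth hypothesis), contradicting minimality. This avoids any appearance of forward reference.
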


Let the support of $\mu^{V}$ be the following disjoint union of
intervals:
\begin{equation*}
  \mbox{Supp}(\mu) = \bigcup_{j=1}^{l} [a_j,b_j].
\end{equation*}

Now if
\begin{itemize}
  \item $V(x)$ is real analytic
  \item $V(x)$ grows fast enough at infinity: $\lim_{|x|\to\infty} \frac{V(x)}{\log(x^2+1)} = \infty$,
\end{itemize}
then the equilibrium measure is $\Psi(x) dx$, where
\begin{eqnarray*}
  \Psi(x) & = & R_+(x)^{\frac12} h(x) \\
  R(x) & = & - \prod_{j=1}^{l} (z-a_j)(z-b_j),
\end{eqnarray*}
and $h(x)$ is real analytic on $\RR$.  See~\cite{deiftzhou10}.

Since $\Psi$ is continuous one can use the calculus of variations
to derive the Euler-Lagrange equations for the equilibrium
measure:

\begin{thm}
  There is a constant $l\in\RR$ such that the equilibrium measure
  $\mu^{V}$ satisfies the following conditions.
  \begin{itemize}
    \item $2\int \log |x-y|^{-1} d\mu^{V}(y) + V(x) \geq l$ for all $x\in\RR$.
    \item $2\int \log |x-y|^{-1} d\mu^{V}(y) + V(x) = l$
      for $x$ in the support of $\Psi$.
  \end{itemize}
  Conversely, if a compactly supported measure $\mu$ satisfies
  the above conditions for some $l$, then it is the equilibrium
  measure $\mu^{V}$.
\end{thm}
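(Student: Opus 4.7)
The plan is a standard Frostman-type variational argument. Define the effective potential
\begin{equation*}
F(x) := 2\int_{\RR} \log|x-y|^{-1}\, d\mu^{V}(y) + V(x).
\end{equation*}
For any compactly supported probability measure $\nu \in M^1$, the convex combination $\mu_t := (1-t)\mu^{V} + t\nu$ lies in $M^1$ for $t \in [0,1]$. Since $I^{V}(\mu_t)$ is literally a quadratic polynomial in $t$, direct expansion (using symmetry of the log kernel) gives
\begin{equation*}
\frac{d}{dt}I^{V}(\mu_t)\Big|_{t=0} = \int F\, d\nu - \int F\, d\mu^{V},
\end{equation*}
and minimality of $\mu^{V}$ forces this quantity to be nonnegative. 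Setting $l := \int F\, d\mu^{V}$, we obtain $\int F\, d\nu \geq l$ for every compactly supported $\nu \in M^1$. Since $\Psi$ is bounded with compact support and $V$ is continuous, the logarithmic potential $U^{\mu^{V}}$ is continuous on $\RR$, so $F$ is continuous; taking $\nu = \delta_x$ yields $F(x) \geq l$ for every $x \in \RR$, the first Euler--Lagrange condition. Combining $F \geq l$ with $\int F\, d\mu^{V} = l$ forces $F = l$ $\mu^{V}$-almost everywhere, and continuity upgrades this to $F \equiv l$ on $\mathrm{supp}(\Psi)$.

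For the converse, suppose $\mu$ is compactly supported with $F_\mu := 2U^{\mu} + V \geq l$ on $\RR$ and $F_\mu = l$ on $\mathrm{supp}(\mu)$. For any compactly supported $\nu \in M^1$ with finite energy, direct expansion gives
\begin{equation*}
I^{V}(\nu) - I^{V}(\mu) = \iint \log|x-y|^{-1}\, d(\nu-\mu)(x)\, d(\nu-\mu)(y) + \int F_\mu\, d(\nu - \mu).
\end{equation*}
The second term is nonnegative because $\int F_\mu\, d\nu \geq l = \int F_\mu\, d\mu$. The first term, the logarithmic self-energy of the balanced signed measure $\sigma := \nu - \mu$, is nonnegative by the standard positive-definiteness of the log kernel on signed measures with $\sigma(\RR) = 0$, visible from the Fourier-type representation
\begin{equation*}
\iint \log|x-y|^{-1}\, d\sigma(x)\, d\sigma(y) = \int_0^{\infty} \frac{|\widehat{\sigma}(\xi)|^2}{\xi}\, d\xi \geq 0.
\end{equation*}
Hence $I^{V}(\nu) \geq I^{V}(\mu)$, so $\mu$ is a minimizer, and uniqueness from the preceding theorem forces $\mu = \mu^{V}$.

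The main technical subtlety is the continuity of $U^{\mu^{V}}$ on all of $\RR$, including inside $\mathrm{supp}(\Psi)$ where the kernel is logarithmically singular; this follows from boundedness of $\Psi$ by a standard estimate on the log-potential of an $L^{\infty}$ density of compact support. The positive-definiteness statement requires $\sigma$ to have finite logarithmic self-energy, which is handled by truncating the kernel near the diagonal and passing to the limit by monotone convergence. All remaining steps are routine algebra.
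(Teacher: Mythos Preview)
Your argument is the standard Frostman variational proof and is correct. Note, however, that the paper does not actually supply a proof of this theorem: it merely remarks that ``since $\Psi$ is continuous one can use the calculus of variations to derive the Euler--Lagrange equations'' and then states the result. Your write-up therefore fills in precisely what the paper gestures at, and there is no alternative argument in the paper to compare against.

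One small point worth tightening: when you pass from $\int F\,d\nu \geq l$ for all compactly supported $\nu$ to $F(x)\geq l$ by ``taking $\nu=\delta_x$'', bear in mind that the derivative formula for $I^V(\mu_t)$ at $t=0$ is only valid when $\nu$ has finite logarithmic energy, which $\delta_x$ does not. The clean way to phrase it is: if $F(x_0)<l$ then by continuity $F<l$ on a small interval, and the uniform measure on that interval has finite energy and gives $\int F\,d\nu<l$, a contradiction. You already have the continuity of $F$ in hand, so this is a one-line fix.
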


If $\psi(x)>0$ except at the endpoints of $J$, and we have strict
inequality in the above theorem for $x\not\in J$, then
we say that the potential $V(x)$ is {\it regular}.
Otherwise we call the potential {\it singular}.

For simplicity, we consider only the case when $V(x)$ is regular
and the support of $\psi$ is a single interval $J = [a,b]$.  This
is always the case when $V(x)$ is convex.

%***************************************************************
\subsection{Uniqueness for 2x2 Riemann Hilbert Problem}
% From p.193-194.

Let $\Sigma$ a contour in $\CC$, and $\Sigma_0$ be the same
contour excluding the points of intersection. Suppose that
$\nu(z): \Sigma_0 \to GL(2,\CC)$, such that $\nu$ is smooth,
bounded, and approaches $Id$ rapidly on the unbounded components
of $\Sigma_0$.

\begin{lem}
  Suppose that $m: \CC\setminus\Sigma \to GL(2,\CC)$ satisfies the
  Riemann-Hilbert problem $(\Sigma,\nu)$ if
  \begin{enumerate}
    \item $m$ is analytic in $\CC \setminus \Sigma$
    \item $m_+(z) = m_-(z) \nu(z)$ for $z\in \Sigma_0$.
    \item $m(z) \to Id$ as $z\to \infty$.
  \end{enumerate}
  Assume further that $\det(\nu) = 1$.
  Then the solution of the Riemann-Hilbert problem is unique, if it exists.
\end{lem}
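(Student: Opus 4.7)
The plan is to reduce the uniqueness assertion to Liouville's theorem by considering the quotient of two putative solutions. Suppose $m_1$ and $m_2$ both solve the Riemann-Hilbert problem $(\Sigma,\nu)$. I will define
\begin{equation*}
  h(z) = m_1(z) \, m_2(z)^{-1}
\end{equation*}
on $\CC \setminus \Sigma$ and show that $h$ extends to an entire function that tends to $Id$ at infinity, so that $h \equiv Id$ and hence $m_1 \equiv m_2$.

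First I would verify that $m_2(z)^{-1}$ is well defined. Because $\det(\nu)=1$, the scalar function $\det(m_2)$ satisfies $\det(m_2)_+ = \det(m_2)_- \det(\nu) = \det(m_2)_-$ on $\Sigma_0$, so it has no jump across $\Sigma_0$. Since $m_2(z) \to Id$ at infinity, $\det(m_2) \to 1$; together with analyticity off $\Sigma$ and removability of the isolated intersection points of $\Sigma$ (which are removable because $m_2$ is locally bounded there by assumption), Liouville's theorem forces $\det(m_2) \equiv 1$. Thus $m_2(z)$ is invertible for every $z \in \CC\setminus \Sigma$.

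Next I would check that $h$ has no jump across $\Sigma_0$. For $z \in \Sigma_0$,
\begin{equation*}
  h_+(z) = m_{1,+}(z) \, m_{2,+}(z)^{-1}
         = m_{1,-}(z)\, \nu(z)\, \nu(z)^{-1}\, m_{2,-}(z)^{-1}
         = h_-(z).
\end{equation*}
Hence $h$ extends analytically across $\Sigma_0$. The finitely many self-intersection points of $\Sigma$ are isolated, and since each $m_i$ is bounded in a punctured neighborhood of them (this is part of the standing assumption on admissible RHP solutions), $h$ is also bounded there; by Riemann's removable singularity theorem, $h$ extends to an entire function on $\CC$.

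Finally, both $m_1$ and $m_2$ tend to $Id$ as $z \to \infty$, so $h(z) \to Id \cdot Id^{-1} = Id$. Each entry of $h - Id$ is therefore a bounded entire function vanishing at infinity, and Liouville's theorem forces $h \equiv Id$, which gives $m_1 = m_2$. The only subtle point, and the one to be most careful about, is justifying the removable-singularity step at the self-intersection points of $\Sigma$; this requires the tacit assumption that admissible solutions of the RHP are locally bounded (or at worst have controlled mild singularities weaker than a simple pole) near those points, an assumption standard in the Deift-Zhou setup but that must be stated explicitly.
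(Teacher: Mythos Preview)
Your proposal is correct and follows essentially the same approach as the paper's sketch: first show $\det(m)\equiv 1$ from $\det(\nu)=1$ via Liouville, then form the ratio $h=m_1 m_2^{-1}$, observe that it has trivial jump, and apply Liouville again to conclude $h\equiv Id$. The paper's remark about an ``algebraic trick specific to $2\times 2$ matrices with determinant $1$'' is just the observation that $m_2^{-1}$ equals the adjugate of $m_2$ and is therefore entrywise polynomial in the entries of $m_2$, making analyticity of the inverse immediate; your route via $\det(m_2)\equiv 1$ and Cramer's rule amounts to the same thing.
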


\begin{proof}
  We sketch the proof from p. 194-198 of~\cite{deift_book}.
  First, because $\det(\nu)=1$, $\det(m)$ is
  analytic across the contour.  So $\det(m)$ is an analytic function
  and, because of the behavior of $m$ at infinity, $\det(m)$ must be
  the constant $1$.  In particular $m$ is always invertible.

  Now suppose that $m$ and $\tilde{m}$ are two solutions.  Then,
  using an algebraic trick specific to $2\times 2$ matrices with
  determinant $1$, it turns out that $H = \tilde{m} m^{-1}$
  satisfies a Riemann-Hilbert problem with the same contour but
  $\nu = Id$.  In other words $H$ is analytic, approaches $Id$ at
  infinity, hence is identically equal to $Id$, establishing
  uniqueness.
\end{proof}

%***************************************************************
\subsection{Expressing Orthogonal Polynomials as the Solution of Riemann Hilbert Problems}
% p. 198

Let $\pi_j(z)$ be the monic orthogonal polynomials with respect to
the weight $e^{-nV(z)}$.

\begin{thm}[Fokas, Its, Kitaev] \label{FIK}
  Let $Y(z)$ be the $2 \times 2$ matrix-valued function satisfying the following RHP:
  \begin{enumerate}
    \item $Y(z)$ is analytic in $\CC \setminus \RR$.
    \item $Y(z) = (1+\bigO(z^{-1})) \left(\begin{array} {c c} z^q & 0 \\ 0 & z^{-q} \end{array} \right)$  as  $z \to \infty$, $z \in \CC \setminus \RR$.
    \item $Y_{+}(z) = Y_{-}(z) \left( \begin{array} {c c} 1 & e^{-nV(z)} \\ 0 & 1 \end{array} \right) $  for  $z \in \RR$.
  \end{enumerate}
  Then $Y$ encodes information about $\pi_q(z)$, the $(q)$th monic
  orthogonal polynomial with respect to the measure $e^{-nV(x)}dx$:
  \begin{equation*}
    Y(z) = \left(\begin{array}{c c}
        \pi_q(z)
          & \int_{\RR} \frac{\pi_q(s) e^{-nV(s)}}{s-z} \frac{ds}{2\pi i} \\
        \gamma_{q-1} \pi_{q-1}(z)
          & \gamma_{q-1} \int_{\RR}
            \frac{\pi_q{q-1}(s) e^{-nV(s)}}{s-z} \frac{ds}{2\pi i}
      \end{array}\right),
  \end{equation*}
  where the constant $\gamma_{q-1}$ is
  \begin{equation*}
    \gamma_{q-1} = \frac{-2\pi i}
        {\int \pi_{q-1}^2(s) e^{-nV(s)}ds}.
  \end{equation*}
\end{thm}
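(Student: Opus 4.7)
The proof splits cleanly into a uniqueness part and an existence/verification part.

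For uniqueness, I would adapt the previous lemma to the present asymptotic normalization. Since the jump matrix $\nu(z) = \left(\begin{smallmatrix} 1 & e^{-nV(z)} \\ 0 & 1 \end{smallmatrix}\right)$ has determinant one, $\det Y(z)$ has no jump across $\mathbb{R}$ and is therefore entire. The asymptotic condition gives $\det Y(z) = \det(I+\mathcal{O}(z^{-1}))\cdot z^q\cdot z^{-q} = 1 + \mathcal{O}(z^{-1})$, so by Liouville's theorem $\det Y \equiv 1$ and $Y$ is invertible. If $\tilde Y$ is a second solution, then $H := \tilde Y Y^{-1}$ is analytic across $\mathbb{R}$ (the jumps cancel: $\tilde Y_-\nu\nu^{-1}Y_-^{-1}$) and tends to $I$ at infinity, so $H\equiv I$.

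For existence, I would verify that the proposed matrix satisfies all three conditions. Analyticity off $\mathbb{R}$ is immediate: the first column entries are polynomials, and the second column entries are Cauchy-type integrals
\begin{equation*}
(Cg)(z) = \frac{1}{2\pi i}\int_\mathbb{R} \frac{g(s)}{s-z}\,ds,
\end{equation*}
applied to $g(s) = \pi_q(s)e^{-nV(s)}$ and $g(s) = \pi_{q-1}(s)e^{-nV(s)}$. The jump condition reduces, by the Plemelj--Sokhotski formula $(Cg)_+(x) - (Cg)_-(x) = g(x)$, to the matrix identity $Y_+ = Y_-\left(\begin{smallmatrix}1 & e^{-nV}\\ 0 & 1\end{smallmatrix}\right)$ on $\mathbb{R}$.

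The asymptotic condition is where the specific choice of $\gamma_{q-1}$ is forced. Expanding $\frac{1}{s-z} = -\sum_{k\geq 0} s^k z^{-k-1}$ for $|z|>|s|$ and invoking orthogonality $\int s^k \pi_m(s)e^{-nV(s)}\,ds = 0$ for $k<m$, I would obtain, with $h_m := \int \pi_m(s)^2 e^{-nV(s)}\,ds$,
\begin{align*}
\frac{1}{2\pi i}\int \frac{\pi_q(s)e^{-nV(s)}}{s-z}\,ds &= -\frac{h_q}{2\pi i}\,z^{-q-1}\bigl(1+\mathcal{O}(z^{-1})\bigr), \\
\frac{1}{2\pi i}\int \frac{\pi_{q-1}(s)e^{-nV(s)}}{s-z}\,ds &= -\frac{h_{q-1}}{2\pi i}\,z^{-q}\bigl(1+\mathcal{O}(z^{-1})\bigr).
\end{align*}
Multiplying $Y(z)$ on the right by $\mathrm{diag}(z^{-q},z^q)$, the $(1,1)$ and $(2,1)$ entries give $1+\mathcal{O}(z^{-1})$ and $\mathcal{O}(z^{-1})$ (using monicity of $\pi_q$ and $\deg\pi_{q-1}=q-1$); the $(1,2)$ entry gives $\mathcal{O}(z^{-1})$; and the $(2,2)$ entry gives $-\gamma_{q-1}h_{q-1}/(2\pi i) + \mathcal{O}(z^{-1})$, which equals $1+\mathcal{O}(z^{-1})$ exactly when $\gamma_{q-1} = -2\pi i / h_{q-1}$, as stated.

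The proof is mostly mechanical; the one step requiring care is the asymptotic expansion at infinity, where orthogonality collapses the Neumann series down to its leading term and pins the normalization constant $\gamma_{q-1}$. Once this is checked, uniqueness promotes the candidate matrix to \emph{the} solution of the Riemann--Hilbert problem.
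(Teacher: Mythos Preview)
Your proposal is correct and follows exactly the approach the paper sketches: uniqueness via the determinant-one jump matrix (adapting the preceding lemma to the $z^{\pm q}$ normalization at infinity), and existence by direct verification that the candidate matrix satisfies the three conditions. The paper's own proof is only two sentences---it observes that the jump has determinant one so uniqueness holds, and says that ``one verifies that the given solution satisfies the Riemann--Hilbert problem''---so your proposal simply supplies the verification details (Plemelj jump, Neumann expansion at infinity, orthogonality forcing the value of $\gamma_{q-1}$) that the paper leaves to the reader.
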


The $2\times 2$ jump matrix in this Theorem has determinant $1$,
so the solution of the Riemann-Hilbert problem is unique if it
exists.  To prove this Theorem, one verifies that the
given solution satisfies the Riemann-Hilbert problem.

%***************************************************************
\subsection{Removing the Bulk of the Oscillatory Behavior}
%p. 202.

For $\Im(z) \neq 0$, let
\begin{equation*}
  g(z) = \int \log(z-s) \Psi(s) ds.
\end{equation*}

We choose $\log(z)$ to have a branch cut along the negative real
axis, and to be real on the positive real axis.

Observe that $g(z)$ is analytic in the region $\CC \setminus
(-\infty,b_l]$.  For real $z$, let $g_+(z)$ be the limiting value
from above the branch cut, and $g_-(z)$ the limiting value from
below. Then
\begin{eqnarray*}
  g_{\pm}(z) & = & \int_a^b \log|z-s|\Psi(s)ds
    \pm \pi i \int_z^b \Psi(s) ds \\
  g_+(z) + g_-(z) & = & 2\int \log |z-s| \Psi(s) ds \\
  g_+(z) - g_-(z) & = & 2\pi i \int_z^b \Psi(s) ds
\end{eqnarray*}

Observe that because $\Psi(s) ds$ is a probability measure,
$g_+(z) - g_-(z) = 2\pi i$ for $z<a$.  Thus $e^{ng(z)}$ is analytic in
$\CC \setminus [a,b]$.

For $z\in\CC\setminus\RR$, let
\begin{equation*}
  \mo(z) =
    \left(\begin{array}{c c} e^{\frac{nl}{2}} & 1 \\ 0 & e^{-\frac{nl}{2}} \end{array} \right)
    Y(z) e^{-ng(z)}
    \left(\begin{array}{c c} e^{-\frac{nl}{2}} & 1 \\ 0 & e^{\frac{nl}{2}} \end{array} \right)
\end{equation*}

Then $\mo$ satisfies a simpler Riemann-Hilbert problem:
\begin{enumerate}
  \item $\mo(z)$ is analytic in $\CC\setminus\RR$
  \item $\mo_+(z) = \mo_-(z) \vo(z)$ for $z\in\RR$
  \item $\mo(z) = I + \bigO(z^{-1})$ as $z\to \infty$,
\end{enumerate}
where
\begin{eqnarray*}
  \vo(z) & = &
      \left(\begin{array}{c c}
        e^{\frac{nl}{2}+ng_-(z)} & 1 \\
        0 & e^{-\frac{nl}{2}-ng_-(z)}
      \end{array} \right)
      \left(\begin{array}{c c} 1 & e^{-nV(z)} \\ 0 & 1 \end{array} \right)
      \left(\begin{array}{c c}
        e^{-ng_+(z) -\frac{nl}{2}} & 1 \\
        0 & e^{ng_+(z)+\frac{nl}{2}}
      \end{array} \right)\\
  & = & \left( \begin{array}{c c}
      e^{n(g_-(z) - g_+(z))} & e^{n(g_-(z) + g_+(z) -V(z) + l)} \\
      0 & e^{n(g_+(z) - g_-(z))}
    \end{array} \right).
\end{eqnarray*}

The upper right entry in the jump matrix $\nu$ is
$e^{n(g_+ + g_- -V + l)}$.
We analyze the exponent in detail.
\begin{eqnarray*}
  & & g_+(x) + g_-(x) - V(x) + l \\
  & & \quad = 2 \intline \log(|x-s|) \Psi(s) ds - V(x) + l
\end{eqnarray*}
Recall that the equilibrium measure $\Psi$ satisfies the
Euler-Lagrange equations.  Thus
\begin{equation*}
  g_+(x) + g_-(x) - V(x) + l \quad
    \left\{ \begin{array}{c @{\quad \mbox{for} \quad}l}
      < 0 & x<a \\
      = 0 & a\leq x\leq b \\
      < 0 & x>b
    \end{array} \right.
\end{equation*}
Because of our assumption that the potential $V$ is regular, the
inequalities above are sharp.

A further simplification to the jump matrix is this.  For $x<a$
and $x>b$,
\begin{equation*}
  e^{n(g_+(x)-g_-(z))}=e^{n(g_+(x)-g_-(z))}=1.
\end{equation*}

To summarize, the jump matrix defined on $\RR$ has the following
properties in the three regions:
\begin{equation*}
  \nu^{(1)}(x) = \left\{ \begin{array}{c @{\quad \mbox{for} \quad} l}
      \left( \begin{array}{c c}
          1 & e^{n(g_-(x) + g_+(x) -V(x) + l)} \\
          0 & 1
        \end{array} \right)
        & x < a \\
      \left( \begin{array}{c c}
          e^{-n(g_+(x)-g_-(x))} & 1 \\
          0 & e^{n(g_+(x)-g_-(x))}
        \end{array} \right)
        & a \leq x \leq b \\
      \left( \begin{array}{c c}
          1 & e^{n(g_-(x) + g_+(x) -V(x) + l)} \\
          0 & 1
        \end{array} \right)
        & x > b
    \end{array} \right.
\end{equation*}

%***************************************************************
\subsection{Analytic Continuation}

The difference $g_+(x)-g_-(x)$ has been defined on the real axis
as the difference between $g(z)$ on opposite sides of a branch
cut. Recall from the previous Subsection that $g_+(x)-g_-(x)$ has an
integral representation for $x\in\RR$.
Call this difference $G(x)$.
\begin{equation*}
  G(x) = 2\pi i \int_x^b \Psi(s) ds.
\end{equation*}
Recall that because $V(x)$ is real analytic, $\Psi(z)$ is an
analytic function times $\sqrt{(z-a)(b-z)}$:
\begin{equation*}
  \Psi(z) = h(z) \sqrt{(z-a)(b-z)}.
\end{equation*}
The function $h(z)$ is analytic in a neighborhood of $\RR$, and
real on the real axis. We choose $\Psi(z)$ to be positive on the
interval $[a,b]$ and have branch cuts along $(-\infty,a]$ and
$[b,\infty)$. Because of its integral representation, $G(z)$ is
also analytic in this region.

%***************************************************************
\subsection{Factoring the Jump Matrix}

Along $(a,b)$, $G'(x) = -2\pi i \Psi(x)$.  Thus there is a complex
neighborhood $U$ of $(a,b)$ such that $\Re(G)>0$ in $U\cap \CC^+$,
and $\Re(G)<0$ in $U\cap \CC^-$.  Because $G$ behaves as $z^{3/2}$
at the endpoints $a$ and $b$, $U$ cannot form an angle greater
than $\frac{\pi}{3}$, with respect to the real axis, at either
endpoint of $[a,b]$.  See Figure~\ref{createLenses}.

In the interval $[a,b]$, we factor the jump matrix as
\begin{eqnarray*}
  \nu & = &
    \left( \begin{array}{c c}
        1 & 0 \\
        e^{nG(z)} & 1
      \end{array} \right)
    \left( \begin{array}{c c}
        0 & 1 \\
        -1 & 0
      \end{array} \right)
    \left( \begin{array}{c c}
        1 & 0 \\
        e^{-nG(z)} & 1
      \end{array} \right) \\
  & = & \nu_-^{(1)} \nu_0^{(1)} \nu_+^{(1)}
\end{eqnarray*}

As illustrated in Figure~\ref{createLenses}, we deform the
original contour $\Sigma^{(1)}$ to obtain a Riemann Hilbert
problem on a new contour $\Sigma^{(2)}$.  It is possible
to translate between a solution of one Riemann-Hilbert
problem and a solution of the other.  Except for $z$ in the region
enclosed by the lenses, $m^{(2)}(z) = m^{(1)}(z)$.  Inside the
upper region,
\begin{equation*}
  m^{(2)}(z) = m^{(1)}(z) \left(\nu_+^{(1)}(z)\right)^{-1}.
\end{equation*}
For $z$ inside the lower region,
\begin{equation*}
  m^{(2)}(z) = m^{(1)}(z) \nu_-^{(1)}(z).
\end{equation*}

\begin{figure}
  \begin{center}
    \includegraphics[scale=0.6]{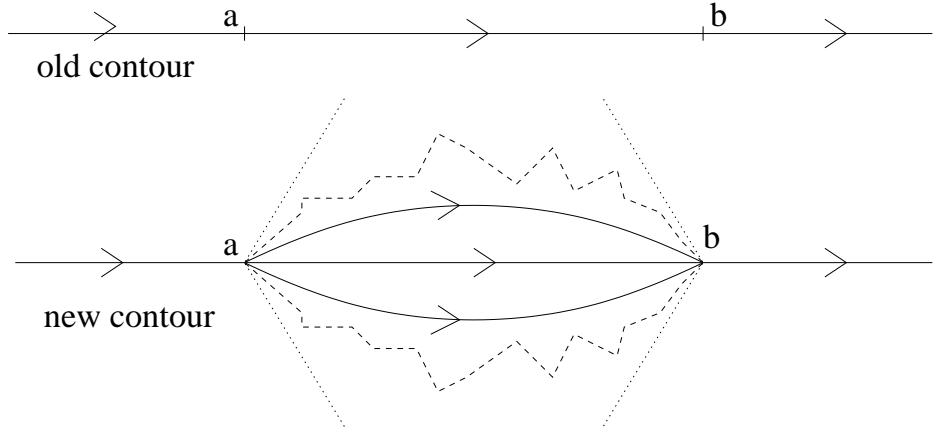}
    \caption{\label{createLenses}
        It is easy to translate between a solution of one
        Riemann-Hilbert problem and a solution of the other.}
  \end{center}
\end{figure}

The jump functions on the new contour are:

\begin{equation*}
  \nu^{(2)}(z) = \left\{ \begin{array}{c @{\quad \mbox{for} \quad} l}
      \left( \begin{array}{c c}
          1 & e^{n(g_-(z) + g_+(z) -V(z) + l)} \\
          0 & 1
        \end{array} \right)
        & z < a \\
      \left( \begin{array}{c c}
          1 & 0 \\
          e^{nG(z)} & 1
        \end{array} \right)
        & z \mbox{ on\ \ lower\ \ lens} \\
      \left( \begin{array}{c c}
          0 & 1 \\
          -1 & 0
        \end{array} \right)
        & a \leq z \leq b \\
      \left( \begin{array}{c c}
          1 & 0 \\
          e^{-nG(z)} & 1
        \end{array} \right)
        & z \mbox{ on\ \ upper\ \ lens} \\
      \left( \begin{array}{c c}
          1 & e^{n(g_-(z) + g_+(z) -V(z) + l)} \\
          0 & 1
        \end{array} \right)
        & z > b
    \end{array} \right.
\end{equation*}

All of the exponents appearing in these jump functions are negative.
As $n\to\infty$, these terms disappear.  The
limiting jump functions are:
\begin{equation*}
  \nu^{(\infty)}(z) = \left\{ \begin{array}{c @{\quad \mbox{for} \quad} l}
      \left( \begin{array}{c c}
          1 & 0 \\
          0 & 1
        \end{array} \right)
        & z < a \\
      \left( \begin{array}{c c}
          1 & 0 \\
          0 & 1
        \end{array} \right)
        & z \mbox{\ \ on\ \ the\ \ lower\ \  lip} \\
      \left( \begin{array}{c c}
          0 & 1 \\
          -1 & 0
        \end{array} \right)
        & a \leq z \leq b \\
      \left( \begin{array}{c c}
          1 & 0 \\
          0 & 1
        \end{array} \right)
        & z \mbox{\ \ on\ \ the\ \ upper\ \  lip} \\
      \left( \begin{array}{c c}
          1 & 0 \\
          0 & 1
        \end{array} \right)
        & z > b
    \end{array} \right.
\end{equation*}

Let $m^{(\infty)}$ be the solution of this limiting
Riemann-Hilbert problem.  The convergence $\nu^{(2)} \to
\nu^{(\infty)}$ as $n\to \infty$ is not uniform and occurs more
slowly in neighborhoods of $a$ and $b$.  Thus we cannot
automatically conclude that $m_2 \to m_{\infty}$ as $n\to
\infty$.

Because we assumed the potential $V(x)$ is regular, there are no
other areas of slow convergence.

%***************************************************************
\subsection{Solution of the Limiting Riemann Hilbert Problem}

To find the solution $m_{\infty}$, we perform a change of basis to
diagonalize the jump function.  This replaces the $2\times 2$
Riemann-Hilbert problem for $m_{\infty}$ with a pair of scalar
Riemann-Hilbert problems.

For $a < x < b$, the diagonalization of the jump function is:
\begin{equation*}
  \left( \begin{array}{c c} 0 & 1 \\ -1 & 0 \end{array} \right)
  = \left( \begin{array}{c c} 1 & 1 \\ i & -i \end{array} \right)
    \left( \begin{array}{c c} i & 0 \\ 0 & -i \end{array} \right)
    \left( \begin{array}{c c} 1 & 1 \\ i & -i \end{array} \right)^{-1}
\end{equation*}

The transformed Riemann-Hilbert problem is:
\begin{eqnarray*}
  \hat{m}(z)
    & = & \left( \begin{array}{c c} 1 & 1 \\ i & -i \end{array} \right)^{-1}
        m_{\infty}(z)
        \left( \begin{array}{c c} 1 & 1 \\ i & -i \end{array} \right)\\
  \hat{\nu}(z)
    & = & \left\{ \begin{array}{c @{\quad \mbox{for} \quad} l}
      \left( \begin{array}{c c}
          1 & 0 \\
          0 & 1
        \end{array} \right)
        & x < a \\
      \left( \begin{array}{c c}
          i & 0 \\
          0 & -i
        \end{array} \right)
        & a \leq x \leq b \\
      \left( \begin{array}{c c}
          1 & 0 \\
          0 & 1
        \end{array} \right)
        & x > b
    \end{array} \right.
\end{eqnarray*}

The solution of this pair of Riemann-Hilbert problems is
\begin{eqnarray*}
  \hat{m}(z) & = &
      \left( \begin{array}{c c}
          \beta & 0 \\
          0 & \beta^{-1}
        \end{array} \right) \\
  \beta & = & \left( \frac{z-b}{z-a} \right)^{\frac14},
\end{eqnarray*}
where $\beta$ is analytic on $\CC\setminus [a,b]$ and $\beta(z)
\to 1$ as $n \to \infty$.

Changing back to the original basis we recover the solution
$m_{\infty}(z)$:
\begin{eqnarray*}
  m_{\infty}(z) & = &
    \left( \begin{array}{c c} 1 & 1 \\ i & -i \end{array} \right)
    \hat{m}(z)
    \left( \begin{array}{c c} 1 & 1 \\ i & -i \end{array} \right)^{-1} \\
  & = & \left( \begin{array}{c c} 1 & 1 \\ i & -i \end{array} \right)
    \hat{m}(z)
    \left( \begin{array}{c c} \frac12 & \frac{-i}2 \\ \frac12 & \frac{i}2 \end{array} \right)\\
  & = & \left( \begin{array}{c c}
      \frac{\beta + \beta^{-1}}{2} & \frac{\beta - \beta^{-1}}{2i} \\
      -\frac{\beta - \beta^{-1}}{2i} & \frac{\beta +
      \beta^{-1}}{2}
    \end{array} \right).
\end{eqnarray*}

%***************************************************************
\subsection{Parametrices at the Endpoints}

The convergence $\nu^{(2)} \to \nu^{(\infty)}$ is not uniform at
the endpoints $a$ and $b$.  Thus the approximation $m^{(2)}(z)
\approx m^{(\infty)}(z)$ is inappropriate near $a$ and $b$.

Let $O_a$ and $O_b$ be neighborhoods of $a$ and $b$, which can be
chosen as small as desired.  Inside the neighborhood $O_a$, one
constructs a parametrix $m_a$ which satisfies the following
Riemann-Hilbert problem:
\begin{itemize}
  \item $m^{(a)}$ is analytic on $O_a \setminus \Sigma^{(2)}$
  \item $m^{(a)}_+(z) = m^{(a)}_+(z) \nu^{(2)}(z)$
    for $z \in \Sigma^{(2)} \cap O_a$
  \item $m^{(a)}(z) = m^{(\infty)}(z) \left(1 + \bigO\left(\frac1n \right)
    \right)$ for $z \in \partial O_a$.
\end{itemize}
In other words, $m_a$ satisfies the Riemann-Hilbert problem
$(\Sigma^{(2)},\nu^{(2)})$ exactly in a neighborhood of $a$, and
matches $m^{(\infty)}$ to within $\bigO(n^{-1})$.  Similarly, one
constructs a parametrix $m^{(b)}$ in $O_b$.  We will not discuss
the construction of these parametrices, which comprise a large
part of the work of~\cite{deiftzhou1} and~\cite{deiftzhou2}.

We now patch together an approximate solution of the
Riemann-Hilbert problem for $m^{(2)}$:
\begin{equation*}
  m^{(p)} = \left\{ \begin{array}{c @{\quad\mbox{ for } \quad} l}
      m^{(\infty)} & z\in \CC\setminus O_a \setminus O_b \\
      m^{(a)} & z\in O_a \\
      m^{(b)} & z\in O_b
    \end{array} \right.
\end{equation*}

The actual solution $m^{(2)}$ will be recovered as a perturbation
of the approximate solution.  Let $L(z) = m^{(2)}(z)(m^{(p)}(z))^{-1}$,
so that $m^{(2)}$ can be recovered from $m^{(p)}$ and $L$.  Then
$L$ approaches $Id$ at $\infty$ and has a jump function as
indicated in Figure~\ref{barbell}.

\begin{figure}
  \begin{center}
    \includegraphics[scale=0.6]{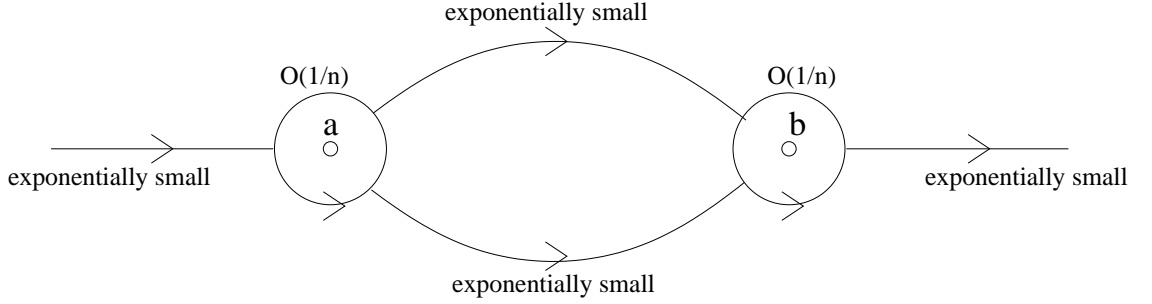}
    \caption{\label{barbell}
        The Riemann-Hilbert problem for $L$.}
  \end{center}
\end{figure}

Observe that the jump function for $L(z)$ in in both $L^2(\Sigma)$
and $L^{\infty}(\Sigma)$.

%***************************************************************
\subsection{The Solution $L(z)$ and its Properties}

Since $m^{(2)}(z) = L(z) m^{(p)}(z)$, the quantity $L(z) -Id$
plays the role of an error term in the approximation
$m^{(2)}(z) = m^{(p)}(z)$.  In order to bound these errors,
we make use of Theorem~\ref{abstractNonsense}, which appears
in~\cite{deift_book}, near p. 219.

Suppose that for a given Riemann-Hilbert problem $(\Sigma,\nu)$,
the jump function $\nu$ factors as $\nu = b_-^{-1}b_+$, where the
$b_{\pm}$ are bounded and invertible. Let
\begin{equation*}
  b_{\pm} = Id \pm \omega_{\pm} \qquad \mbox{and} \qquad \omega = \omega_+ + \omega_-.
\end{equation*}
For $f\in L^2(\Sigma)$, let $C_{\pm}$ be the limits of the Cauchy
operators:
\begin{eqnarray*}
  (Cf)(z) & = & \frac{1}{2\pi i} \int_{\Sigma} \frac{f(s)}{s-z} ds
    \qquad \mbox{for} \quad z\in CC\setminus\Sigma \\
  (C_{\pm}f)(z) & = & \lim_{z'\to z} Cf(z') \qquad \mbox{for} \quad
    z\in \Sigma_0
\end{eqnarray*}
Let $C_{\omega}f = C_+(f\omega_-) + C_-(f\omega_+)$. Assume that
$\omega_{\pm}$ are in $L^2(\Sigma)$, so that $C_{\omega}Id \in
L^2(\Sigma)$.

\begin{thm} \label{abstractNonsense}
  Suppose that $I-C_{\omega}$ is invertible on $L^2(\Sigma)$ and
  let $\mu \in I + L^2(\Sigma)$ be the unique solution of
  \begin{equation*}
    (I-C_{\omega})\mu = Id
  \end{equation*}
  or, more properly,
  \begin{equation*}
    (I-C_{\omega})(\mu-Id) = C_{\omega} Id \in L^2(\Sigma).
  \end{equation*}
  Then
  \begin{equation*}
    m(z) = I + (C(\mu\omega))(z) \qquad
      \mbox{for} \quad z\in \CC\setminus\Sigma
  \end{equation*}
  is the solution of the Riemann Hilbert problem.
\end{thm}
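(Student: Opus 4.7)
The plan is to verify directly that the function $m(z) = I + C(\mu\omega)(z)$ solves the three requirements of the Riemann--Hilbert problem: analyticity off $\Sigma$, normalization $m(z)\to I$ at infinity, and the jump relation $m_+ = m_- \nu$. Analyticity off $\Sigma$ is automatic from the definition of the Cauchy operator $C$ as a contour integral of $\mu\omega$ over $\Sigma$. The normalization at infinity follows because $\mu\omega \in L^1(\Sigma) + L^2(\Sigma)\cdot L^2(\Sigma) \subset L^1(\Sigma)$ (recall $\mu - I \in L^2(\Sigma)$ and $\omega_{\pm} \in L^2(\Sigma)$), so $C(\mu\omega)(z) = \frac{1}{2\pi i}\int_\Sigma \frac{(\mu\omega)(s)}{s-z}\,ds \to 0$ as $z\to\infty$.

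The heart of the argument is the jump relation, which I would derive purely algebraically from the Plemelj--Sokhotski identity $C_+ - C_- = I$ and the equation $(I - C_\omega)\mu = I$. Rewriting the latter as $\mu - I = C_+(\mu\omega_-) + C_-(\mu\omega_+)$, I would compute
\begin{align*}
  m_+ - I &= C_+(\mu\omega_+) + C_+(\mu\omega_-) \\
          &= \bigl(C_-(\mu\omega_+) + \mu\omega_+\bigr) + C_+(\mu\omega_-) \\
          &= (\mu - I) + \mu\omega_+,
\end{align*}
which gives $m_+ = \mu(I + \omega_+) = \mu b_+$. The symmetric calculation, pushing $C_+(\mu\omega_-) = C_-(\mu\omega_-) + \mu\omega_-$ instead, yields $m_- = \mu(I - \omega_-) = \mu b_-$. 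Since $b_\pm$ are invertible, we conclude $m_+ = m_- b_-^{-1} b_+ = m_- \nu$, which is the desired jump relation.

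The only nonroutine step I expect is justifying the Plemelj limits $m_\pm = I + C_\pm(\mu\omega)$ at the level of $L^2$ boundary values rather than pointwise ones. This rests on the $L^2$-boundedness of the Cauchy projections $C_\pm$ on the class of contours $\Sigma$ under consideration, which is standard for the unions of smooth arcs arising in the Deift--Zhou steepest descent analysis. Once those boundary values are available, the algebraic computation above produces the jump relation almost formally, and together with the easy analyticity and normalization statements this completes the verification that $m$ solves the Riemann--Hilbert problem $(\Sigma, \nu)$.
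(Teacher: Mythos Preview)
Your argument is correct and is precisely the standard Beals--Coifman verification: use the Plemelj relation $C_+-C_-=I$ together with the defining equation $\mu - I = C_+(\mu\omega_-)+C_-(\mu\omega_+)$ to obtain $m_+=\mu b_+$ and $m_-=\mu b_-$, whence $m_+=m_-\nu$. There is nothing to compare against in the paper itself, however: the paper does not prove Theorem~\ref{abstractNonsense} but simply quotes it from~\cite{deift_book} (``which appears in~\cite{deift\_book}, near p.~219'') and then \emph{uses} its conclusion to bound $L(z)-Id$ and its derivatives. Your write-up therefore supplies more than the paper does.

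One small technical remark: your claim that $\mu\omega\in L^1(\Sigma)$ is not quite what you wrote. You have $(\mu-I)\omega\in L^1$ by Cauchy--Schwarz, but the remaining piece $I\cdot\omega=\omega$ is only asserted to be in $L^2$, so a priori $\mu\omega\in L^1+L^2$. This is harmless in the paper's setting (where in fact $\omega\in L^2\cap L^\infty$ and the contour outside a compact set is just $\RR$, so the Cauchy transform of an $L^2$ function still decays at infinity), but if you want the normalization step to be self-contained you should either invoke $\omega\in L^2\cap L^\infty$ or note that the $L^2$ piece also gives $C(\omega)(z)\to 0$ as $z\to\infty$ for the contours at hand.
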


We now depart from~\cite{deift_book}  in order to bound
$L(z)$ and its derivatives.

In our case, we may choose $b_- = Id$, $b_+ = \nu$, and $\omega_+
= \nu-Id$.  Then $C_{\omega}f = C_-(f\omega_+)$.

For smooth contours, the $C_{\pm}$ are bounded on $L^2(\Sigma)$
with constants depending only on $\Sigma$.  Since $\omega_+\in
L^{\infty}(\Sigma)$,
\begin{eqnarray*}
  ||C_{\omega}f||^{L^2(\Sigma)}
    & \leq & c ||\omega_+ f||_{L^2(\Sigma)} \\
  & \leq & c ||\omega_+||_{L^{\infty}(\Sigma)}
      || f||_{L^2(\Sigma)}.
\end{eqnarray*}

Since $||\omega_+||_{L^{\infty}(\Sigma)} = \bigO(\frac1n)$, we
eventually have $||C_{\omega}||^{L^2\to L^2} < \frac12$.  Once
this occurs, $(I-C_{\omega})$ is invertible with $L^2\to L^2$ norm
at most $2$.  This allows us to bound $(\mu-Id)$ in $L^2(\Sigma)$:
\begin{eqnarray*}
  ||\mu - I||_{L^2(\Sigma)} & \leq &
    ||(I-C_{\omega})C_{\omega} I ||_{L^2(\Sigma)} \\
  & \leq & 2 ||C_{\omega} I ||_{L^2(\Sigma)} \\
  & \leq & 2 c ||\omega_+||_{L^2(\Sigma)} \\
  & = & \bigO\left(\frac1n\right).
\end{eqnarray*}

Finally we have an expression for $L(z)$:
\begin{equation*}
  L(z) = Id + \frac{1}{2\pi i} \int_{\Sigma} \frac{\mu
  \omega}{s-z} ds.
\end{equation*}
Since $\mu\in L^2(\Sigma)$ and $\omega \in L^2(\Sigma)$,
$(\mu\omega) \in L^1(\Sigma)$ and
\begin{eqnarray*}
  ||\mu\omega||_{L^1(\Sigma)}
    & \leq & ||\mu||_{L^2(\Sigma)}^{\frac12}
      ||\omega||_{L^2(\Sigma)}^{\frac12} \\
  & = & \bigO\left(\frac1n\right)^{\frac12}  \bigO\left(\frac1n\right)^{\frac12} \\
  & = & \bigO\left(\frac1n\right).
\end{eqnarray*}
For $z$ not within distance $d$ of the contour $\Sigma$,
\begin{eqnarray*}
  |L(z) - Id| & = &
    \left|\frac{1}{2\pi i} \int_{\Sigma} \frac{\mu\omega}{s-z} ds \right| \\
  & \leq & \frac{1}{2\pi d} ||\mu \omega||_{L^1(\Sigma)} \\
  & = & \bigO\left(\frac{1}{nd}\right).
\end{eqnarray*}
For $|z-\Sigma|\geq d$, we can also control the derivatives of
$L(z)$:
\begin{eqnarray*}
  \left|\frac{\partial^k}{\partial z^k}  L(z)\right| & = &
    \left|\frac{k!}{2\pi i} \int_{\Sigma} \frac{\mu\omega}{(s-z)^{k+1}} ds\right| \\
  & \leq & \frac{k!}{2\pi d^{k+1}} ||\mu \omega||_{L^1(\Sigma)} \\
  & = & \bigO\left(\frac{1}{nd^{k+1}}\right).
\end{eqnarray*}

%***************************************************************
\subsection{Retracing Our Steps}

We now retrace our steps in order to obtain asymptotics for
orthogonal polynomials, on the real axis for $a<z<b$.

First, we choose a proper subinterval of
$[a+\epsilon,b-\epsilon]\subset[a,b]$. Next, review the above
steps to make sure that the neighborhoods $O_a$ and $O_b$ have
radius strictly less than $\epsilon$.  Let $d$ be the distance
between the contour $\Sigma$ and the interval
$[a+\epsilon,b-\epsilon]$.

For $z\in [a+\epsilon,b-\epsilon]$, we obtain the asymptotics for
$Y(z)$ by approaching the real axis from above.  Recall that
$Y_{11}(z)=\pi_n(z)$ and $Y_{21}(z)=-i(\mbox{const})\pi_{n-1}(z)$
are the quantities that interest us, and are unaffected by the
jump function along the real axis.
\begin{eqnarray*}
  Y(z) & = & \left( \begin{array}{c c}
      e^{-\frac{nl}{2}} & 0 \\
      0 & e^{\frac{nl}{2}}
    \end{array} \right)
    L(z)
    m^{(\infty)}(z)
    \left( \begin{array}{c c}
      1 & 0 \\
      e^{-nG(z)} & 1
    \end{array} \right) \\
  & & \quad \times
    \left( \begin{array}{c c}
      e^{\frac{nl}{2}} & 0 \\
      0 & e^{-\frac{nl}{2}}
    \end{array} \right)
    e^{n g_+(z)} \\
  & = & \left( \begin{array}{c c}
      e^{-\frac{nl}{2}} L_{11}(z) & e^{-\frac{nl}{2}} L_{12}(z) \\
      e^{\frac{nl}{2}}  L_{21}(z) & e^{\frac{nl}{2}}  L_{22}(z)
    \end{array} \right)
    \left( \begin{array}{c c}
      \frac{\beta + \beta^{-1}}{2} & \frac{\beta - \beta^{-1}}{2i} \\
      -\frac{\beta - \beta^{-1}}{2i} & \frac{\beta +
      \beta^{-1}}{2}
    \end{array} \right) \\
  & & \quad \times
    \left( \begin{array}{c c}
      e^{\frac{nl}{2} + ng_+(z)} & 0 \\
      e^{\frac{nl}{2} + ng_+(z) - n G(z)} & e^{-\frac{nl}{2}+ng_+(z)}
    \end{array} \right)
\end{eqnarray*}

The entry $Y_{11}(z)$ in the above product is
\begin{eqnarray*}
  Y_{11}(z) & = & e^{ng_+(z)}\left(
        L_{11}(z)\frac{\beta+\beta^{-1}}{2}
        - L_{12}(z)\frac{\beta - \beta^{-1}}{2i}
      \right) \\
  & & \quad + e^{ng_-(z) - nG(z)} \left(
        L_{11}(z)\frac{\beta - \beta^{-1}}{2i}
        + L_{12}\frac{(z)\beta+\beta^{-1}}{2}
      \right) \\
  & = & \Re\left[e^{ng_+(z)}\left(
      e^{i\frac{\pi}{4}}(L_{11}(z)+iL_{12}(z))
        \left(\frac{b-z}{z-a}\right)^{\frac14} \right. \right.\\
  & & \qquad \left. \left. + e^{-i\frac{\pi}{4}}(L_{11}(z)-iL_{12}(z))
        \left(\frac{z-a}{b-z}\right)^{\frac14} \right) \right] \\
  & = & e^{n\int_a^b \log|z-s| \Psi(s) ds} \times \\
  & & \Re\left[ e^{n i\pi \int_z^b \Psi(s) ds}
      \left(e^{i\frac{\pi}{4}}(L_{11}(z)+iL_{12}(z))
        \left(\frac{b-z}{z-a}\right)^{\frac14} \right. \right.\\
  & & \qquad \left. \left. + e^{-i\frac{\pi}{4}}(L_{11}(z)-iL_{12}(z))
        \left(\frac{z-a}{b-z}\right)^{\frac14} \right) \right].
\end{eqnarray*}

The entry $Y_{21}(z)$ in the above product is:
\begin{eqnarray*}
  & = & e^{nl + n\int_a^b \log|z-s| \Psi(s) ds} \times \\
  & & i\times \Im\left[ e^{n i\pi \int_z^b \Psi(s) ds}
      \left(e^{i\frac{3\pi}{4}}(L_{22}(z)-iL_{21}(z))
        \left(\frac{b-z}{z-a}\right)^{\frac14} \right. \right.\\
  & & \qquad \left. \left. + e^{-i\frac{3\pi}{4}}(L_{22}(z)+iL_{21}(z))
        \left(\frac{z-a}{b-z}\right)^{\frac14} \right) \right].
\end{eqnarray*}

In the Christoffel-Darboux formula, it is easier to deal with
normalized orthogonal polynomials $\phi_j(z)$ than with monic
orthogonal polynomials $\pi_j(z)$:
\begin{equation*}
  \phi_j(x) = \frac{\pi_j(x)}
    {\left( \intline (\pi_j(x))^2 e^{-nV(x)} dx \right)^{\frac12}}
\end{equation*}
When $j=n$, we approximate heuristically the normalizing constant
in the denominator, refer the reader to to~\cite{deiftzhou4} for the precise
result, and state without proof the error term in our
approximation:
\begin{eqnarray*}
  & & \intline (\pi_n(x))^2 e^{-nV(x)} dx \\
  & & \quad = \left(1 + \bigO\left( \frac1n \right)\right)
    \int_a^b (\pi_n(x))^2 e^{-nV(x)} dx \\
  & & \quad = \left(1 + \bigO\left( \frac1n \right)\right)
    \times \int_a^b e^{2n\int_a^b log|x-s| \Psi(s) ds -nV(x)} \\
  & & \qquad \quad \times \Re \left[ \mbox{oscillatory} \cdot
      \sqrt{ \left(\frac{b-x}{x-a}\right)^{\frac12} +
        \left(\frac{x-a}{b-x}\right)^{\frac12} }
    \right]^2 dx \\
  & & \quad = \left(1 + \bigO\left( \frac1n \right)\right)
    e^{-nl} \int_a^b \frac12
    \left( \left(\frac{b-x}{x-a}\right)^{\frac12} +
      \left(\frac{x-a}{b-x}\right)^{\frac12}
    \right) dx \\
  & & \quad = e^{-nl} \frac{(b-a)\pi}{2}
    \left(1 + \bigO\left( \frac1n \right)\right).
\end{eqnarray*}

Even more convenient than the normalized orthogonal polynomials,
for later computations, are
\begin{equation*}
  \eta_j(x) = \frac{\phi_j(x)}{e^{\frac{n}{2}V(x)}}.
\end{equation*}
The advantage of using $\eta_j(x)$ is that they are orthonormal
with respect to Lebesgue measure on $\RR$.

The entry $Y_{21}(z)$ is a negative imaginary constant times
$\pi_{n-1}(z)$. Using techniques similar to those above, we
approximately normalize this polynomial, obtaining an
approximation for $\phi_{n-1}(x)$.

Using our knowledge of $Y_{11}(z)$, $Y_{21}(z)$, the normalizing
constants, and $L(z)$, we obtain the leading order asymptotics for
$\eta_n(x)$, $\eta_{n-1}(x)$, and their derivatives.

\begin{thm}
  Let $\epsilon>0$, and $k$ a nonnegative integer.
  For $z\in [a+\epsilon,b-\epsilon]$, the
  leading order asymptotics of
  \begin{equation*}
    \frac{\partial^k}{\partial z^k} \eta_n(z)
    \qquad \mbox{and} \qquad
    \frac{\partial^k}{\partial z^k} \eta_{n-1}(z)
  \end{equation*}
  as $n\to \infty$ are, respectively,
  \begin{eqnarray*}
    & & \left(n \pi \Psi(z)\right)^k \sqrt{\frac{2}{(b-a)\pi}} \\
    & & \qquad \Re\left[ i^k e^{n i\pi \int_z^b \Psi(s) ds}
        \left(e^{i\frac{\pi}{4}}
          \left(\frac{b-z}{z-a}\right)^{\frac14}
        + e^{-i\frac{\pi}{4}}
          \left(\frac{z-a}{b-z}\right)^{\frac14} \right)
        + \bigO\left( \frac1n \right)
      \right]
  \end{eqnarray*}
  and
  \begin{eqnarray*}
    & & \left(n\pi \Psi(z)\right)^k \sqrt{\frac{2}{(b-a)\pi}} \\
    & & \qquad \Re\left[ i^k e^{n i\pi \int_z^b \Psi(s) ds}
        \left(e^{-i\frac{3\pi}{4}}
          \left(\frac{b-z}{z-a}\right)^{\frac14}
        + e^{-i\frac{\pi}{4}}
          \left(\frac{z-a}{b-z}\right)^{\frac14} \right)
        + \bigO\left( \frac1n \right)
      \right].
  \end{eqnarray*}
  The constants implied by $\bigO$ depend only on $k$, the
  potential $V(x)$, and $\epsilon$.
\end{thm}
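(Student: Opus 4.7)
The plan is to combine the asymptotic formulas for $Y_{11}(z)$ and $Y_{21}(z)$ derived in the preceding subsection with the normalization estimate for $\|\pi_n\|_{L^2(e^{-nV})}^2$ to produce a closed-form expression for $\eta_n(z)$ and $\eta_{n-1}(z)$ valid on $[a+\epsilon,b-\epsilon]$. The key simplification is that on the support of $\Psi$, the Euler--Lagrange equation gives $2\int \log|z-s|\Psi(s)\,ds = V(z)-l$ exactly, so the factor $e^{n\int \log|z-s|\Psi(s)\,ds}=e^{n(V(z)-l)/2}$ combines with $e^{-nV(z)/2}$ and with the normalizing factor $e^{nl/2}$ to yield a bounded prefactor independent of $n$. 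All of the remaining $n$-dependence then sits in the oscillatory exponential $e^{in\pi\int_z^b \Psi(s)\,ds}$ and in the error $L(z)-I$, giving
\begin{equation*}
\eta_n(z) = \sqrt{\tfrac{2}{(b-a)\pi}}\,\Re\!\left[e^{in\pi\int_z^b \Psi(s)\,ds}\bigl(A(z) + O(n^{-1})\bigr)\right],
\end{equation*}
and analogously for $\eta_{n-1}$, where $A(z)$ is the explicit algebraic function appearing in the theorem statement.

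To derive the derivative asymptotics, I would apply $\partial_z^k$ directly to this closed-form. Since the constant prefactor and $\Re$ commute with differentiation, it suffices to expand
\begin{equation*}
\partial_z^k\!\left[e^{in\pi\int_z^b \Psi(s)\,ds}(A(z)+O(n^{-1}))\right] = \sum_{j=0}^k \binom{k}{j}\partial_z^j\!\left[e^{in\pi\int_z^b \Psi(s)\,ds}\right]\partial_z^{k-j}\!\bigl(A(z)+O(n^{-1})\bigr)
\end{equation*}
by Leibniz. Each differentiation of the oscillatory exponential pulls down the factor $-in\pi\Psi(z)$, so $\partial_z^j[e^{in\pi\int_z^b \Psi(s)\,ds}]=(-in\pi\Psi(z))^j e^{in\pi\int_z^b \Psi(s)\,ds}$ plus lower-order corrections in $n$ of size $O(n^{j-1})$. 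The $j=k$ term produces the asserted leading order $(n\pi\Psi(z))^k$; the sign $(-i)^k$ reconciles with the stated $i^k$ after absorbing $(-1)^k$ into the complex-conjugate symmetry of $A(z)$ inside $\Re$. All remaining Leibniz terms carry at most $n^{k-1}$ and therefore contribute relatively $O(1/n)$.

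The main obstacle is to ensure that $L(z)-I$ and all of its first $k$ derivatives are genuinely $O(1/n)$ uniformly on $[a+\epsilon,b-\epsilon]$, so that the Leibniz terms involving derivatives of the error are truly negligible. The bound $\partial_z^j L(z) = O(1/(nd^{j+1}))$ already established from the Cauchy representation $L(z) = I + \frac{1}{2\pi i}\int_\Sigma (\mu\omega)/(s-z)\,ds$ suffices, provided we arrange the deformed contour $\Sigma^{(2)}$, together with the endpoint neighborhoods $O_a,O_b$ and the lens lips, to sit at a fixed positive distance $d=d(\epsilon,V)>0$ from $[a+\epsilon,b-\epsilon]$. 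This is available because the geometry of $\Sigma^{(2)}$ is dictated only by the domain of analyticity of $\Psi$ and by the radii of $O_a,O_b$, and is independent of $n$; shrinking $O_a$ and $O_b$ to lie inside $[a,a+\epsilon/2]$ and $[b-\epsilon/2,b]$ respectively guarantees $d\geq\epsilon/2$. With this $n$-independent $d$, every subleading Leibniz term is $O(1/n)$ relative to the leading $n^k$ contribution, and the uniform asymptotic with the stated error term follows.
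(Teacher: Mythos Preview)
Your proposal is correct and follows essentially the same route as the paper: the paper likewise assembles $\eta_n,\eta_{n-1}$ from the explicit $Y_{11},Y_{21}$ formulas, the Euler--Lagrange identity, and the normalization estimate, and then obtains the derivative asymptotics by differentiating the closed-form expression, using the bound $\partial_z^j L(z)=O(n^{-1}d^{-(j+1)})$ together with an $n$-independent choice of contour geometry to control the error terms. Your Leibniz-rule bookkeeping makes explicit what the paper only summarizes in a sentence.

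One small caution: your claimed reconciliation of $(-i)^k$ with the stated $i^k$ is not quite right as written. Conjugation inside $\Re[\,\cdot\,]$ replaces $(-i)^k e^{i\theta}A$ by $i^k e^{-i\theta}\bar A$, not by $i^k e^{i\theta}A$, so the factor $(-1)^k$ cannot be ``absorbed'' independently of the rest of the expression. This is a cosmetic point about matching the precise form of the theorem as stated (and may reflect a sign convention or typo in the paper itself, which does not spell out this step); it does not affect the correctness of your method or of the leading-order magnitude $(n\pi\Psi(z))^k$ and error $O(n^{-1})$.
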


%***************************************************************
\section{Special Case: The Gaussian Unitary Ensemble}
%***************************************************************

The ensemble $GUE_n$ is the set of $n\times n$ Hermitian matrices
with joint probability density function (j.p.d.f.)
\begin{eqnarray*}
  & & \left( \pi^{-\frac{n^2}{2}} 2^{\frac{n(n+1)}{2}} \right)
      \exp\left(-\sum_{i} m_{i,i}^2 - 2\sum_{i<j} |m_{i,j}|^2 \right)dM \\
  & & = C_n e^{-Tr(M^2)} dM
\end{eqnarray*}
In other words, the entries on the diagonal are real, and are chosen
independently from the Gaussian distribution with variance $1$.
In the upper triangle, the real and imaginary parts of each entry are chosen
independently, all from the Gaussian distribution with variance $\frac12$.

The j.p.d.f. for the eigenvalues is obtained from
the j.p.d.f. for the matrix entries using Weyl integration:
\begin{equation*}
    \left( \prod_{j=0}^{n-1}\left(\frac{2^j}{j!} \right)
    \pi^{-n/2} \frac{1}{n!} \right)
 \prod_{i<j} (\lambda_i - \lambda_j)^2 e^{-\sum \lambda_i^2}
\end{equation*}

At first glance the Gaussian unitary ensemble seems not to be
an instance of the universal unitary ensemble, because the
j.p.d.f. contains the term $e^{-\sum \lambda_i^2}$ instead
of $e^{-n \sum \lambda_i^2}$.  Since the potential $V(x)=x^2$
is homogeneous, this difference is eliminated by rescaling.
Thus, the Moment Estimation Theorem we present for GUE is
really a special case of the universal version of the Theorem.

Instead of just using the universal Theorem, we prove
the Moment Estimation Theorem in the GUE case by using
Plancherel-Rotach asymptotics for Hermite polynomials
instead of the more general asymptotics by
Deift, Kriecherbauer, McLaughlin, Venakides, and Zhou.
The standard reference for Plancherel-Rotach asymptotics
is~\cite{szego}.

We present this alternative proof because
many readers will be familiar with Plancherel-Rotach asymptotics
but not the Deift-Zhou asymptotics.   For such readers,
the GUE version of the Moment Estimation Theorem is a good
warm-up for the universal case.

\begin{thm} [Moment Estimation Theorem, GUE version]
  Let $\epsilon>0$.  For each $n$, let $I_n \subset
  \left(-(\sqrt{2}-\epsilon)\sqrt{n},(\sqrt{2}-\epsilon)\sqrt{n}\right)$
  be an interval, such that $\sqrt{n}|I_n| \to \infty$ as $n\to
  \infty$.  Let $\G$ be the random variable which counts the number of $GUE_n$ eigenvalues
  whose average is in $I_n$ and whose difference is at most
  $\gamma_n$.  Let $G_{\mu}$ be the Poisson distribution with mean
  \begin{equation*}
    \mu_n = \frac{\pi^2 \gamma^3}{9} \int_{I} K_n(x,x)^4 dx.
  \end{equation*}
  Then for all $k\geq 1$,
  \begin{equation*}
    E(\G^k) = E(G_{\mu}^k)\left(1 +
        \bigO\left(
          n^{-1}, n\gamma^2,(n^{\frac12}|I|)^{-\frac23}
        \right)
    \right).
  \end{equation*}
  The constant implied by $\bigO$ depends only on $k$ and $\epsilon$.
\end{thm}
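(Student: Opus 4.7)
The proof follows the three-step architecture already established for CUE and UUE, with Plancherel--Rotach asymptotics replacing Deift--Zhou. First, I would express $E(\G^k)$ by Gaudin's method (Theorem~\ref{intoutCue}) as a sum over collapses, each term being an integral of a determinant built from the GUE projection kernel $K_n(x,y) = \sum_{j=0}^{n-1} \eta_j(x)\eta_j(y)$, where $\eta_j(x) = \phi_j(x) e^{-x^2/2}$ and $\phi_j$ is the normalized Hermite polynomial. The combinatorics of clean vs.\ mixed collapses, and the identification of the block-diagonal clean contribution with $\sum_j a_{k,j} E(\G)^j = E(G_\mu^k)$, carry over verbatim from the CUE case; only the estimates on $K_n$ must be redone.

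For the main term (step~2), after the change of variables $x=(u+t)/2$, $y=(u-t)/2$ the first moment becomes
\begin{equation*}
E(\G) = \int_{I_n}\!\!\int_{-\gamma/2}^{\gamma/2}\!\!
  y^2\bigl((\Kf)^2 - K\Kff\bigr) \,dy\,dx + \text{(Taylor remainder)},
\end{equation*}
and the diagonal quantities reduce via Christoffel--Darboux to expressions in $\eta_n^{(j)}(x)$ and $\eta_{n-1}^{(j)}(x)$ exactly as in Section~\ref{firstUUE}. Plancherel--Rotach gives, uniformly for $x \in [-(\sqrt2-\epsilon)\sqrt n, (\sqrt2-\epsilon)\sqrt n]$, an expression of the form $\eta_n(x) = \sqrt{2/\pi}\,(2n-x^2)^{-1/4}\cos\theta_n(x) + \bigO(n^{-3/4})$ with a conjugate formula for $\eta_{n-1}$ differing by a phase of approximately $\pi/2$; each differentiation brings down the local frequency $\pi K_n(x,x) = \sqrt{2n-x^2} + \bigO(1)$. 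Substituting these and averaging the oscillatory cross terms exactly as in the UUE computation of $[3,0,0]$, $[2,1,0]$, $[2,0,0]$, $[1,0,0]$ yields the leading coefficient $\pi^2 K_n(x,x)^4/9$, and thus $E(\G) = \mu_n\bigl(1 + \bigO(n^{-1} + n\gamma^2 + (n\gamma^2)^3)\bigr)$.

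For step~3 I need the GUE analog of Lemma~\ref{derivativeBoundsUniversal}: for $(x,y)$ in the bulk square, any mixed partial of total order $k$ satisfies $\bigO(n^{(k+1)/2})$ always, and $\bigO(|x-y|^{-1}\, n^{k/2})$ when $|x-y| \geq n^{-1/2}$. The proof is the same two-region argument as in the universal case; in the bulk region $|x-y| \geq n^{-1/2}$ each $\eta_j^{(k_i)}$ contributes $n^{k_i/2}$ by Plancherel--Rotach, while in the near-diagonal region one Taylor-expands $\eta_{n-1}^{(k_2)}(y)$ about $x$, handles the remainder term directly, and observes that the lower-order polynomial terms in $1/(x-y)^{1+k_3-j}$ must cancel identically in $j$ because $K_n(x,y) = \sum_i \eta_i(x)\eta_i(y)$ is smooth on the diagonal. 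Given these bounds, the splitting of $\Omega$ into $\Omega_\alpha$ and $\Omega_{\bar\alpha}$ and the dominant-balance choice $\alpha = (|I_n|/\sqrt n)^{1/3}$ produce off-block-diagonal and mixed-collapse errors of relative size $\bigO((\sqrt n\,|I_n|)^{-2/3})$ and $\bigO((\sqrt n\,|I_n|)^{-1})$ respectively, which match the stated error.

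The principal technical obstacle is establishing the uniform-in-bulk asymptotics for the derivatives $\eta_n^{(k)}$, since one cannot simply differentiate an asymptotic expansion with oscillatory phase of amplitude $n^{1/2}$ without arguing that the differentiated error remains $\bigO(n^{k/2-1/4})$. For Hermite functions this is tractable because $\phi_n$ satisfies the explicit ODE $\phi_n''(x) - 2x\phi_n'(x) + 2n\phi_n(x) = 0$ and the classical three-term recurrence, either of which lets one express $\phi_n'$ algebraically in terms of $\phi_n$ and $\phi_{n\pm1}$ and thereby inherit Plancherel--Rotach for $\phi_n$ at the level of derivatives. Once these derivative asymptotics are in place, every remaining step is a routine transcription of the CUE analysis with $n|I_n|$ replaced by $\sqrt n\,|I_n|$.
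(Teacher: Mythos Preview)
Your architecture is correct and matches the paper's: the same Gaudin/collapse combinatorics, the same identification of the block-diagonal clean-collapse contribution with $E(G_\mu^k)$, and the same two-region splitting $\Omega_\alpha \cup \Omega_{\bar\alpha}$ for the off-block-diagonal error. The one substantive difference is in how derivatives of the kernel are handled. You follow the UUE template, carrying the quantities $\eta_n^{(k)}(x)$ through and invoking Plancherel--Rotach at the level of derivatives, with the ODE or recurrence brought in at the end to justify that step. The paper instead exploits the Hermite-specific identity $H_n'(x) = 2nH_{n-1}(x)$ \emph{from the outset}: every derivative appearing in the first-moment integrand and in the proof of Lemma~\ref{trickyLemma} is immediately converted to an index shift, so that all estimates reduce to Plancherel--Rotach for the undifferentiated polynomials $H_n, H_{n-1}, \dots, H_{n-4}$, compared pairwise via Lemma~\ref{consecutivePRA} on consecutive phases $\Phi(x,n-j)$. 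This sidesteps entirely the ``differentiate-an-asymptotic'' issue you flag as the principal obstacle, and is in fact the whole reason the paper presents a separate GUE proof rather than deducing it from the UUE case. Your route works too, but the paper's is shorter and more elementary.

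Two minor slips worth fixing: your displayed first-moment integrand $(\Kf)^2 - K\Kff$ omits the $K\Kbb$ term (the correct quadratic-in-$y$ coefficient is $K\Kff - K\Kbb - (\Kf)^2$, exactly as in Section~\ref{firstUUE}); and the dominant-balance choice should be $\alpha = (|I_n|/n)^{1/3}$, not $(|I_n|/\sqrt n)^{1/3}$, though the final relative error $(\sqrt n\,|I_n|)^{-2/3}$ you state is correct.
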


The novel aspects of the proof of the GUE Theorem are the estimation
of the first moment using Plancherel-Rotach asymptotics,
the different scaling, and a slightly different proof of the
bounds for $K_n(X,y)$ and its derivatives.
We only treat these aspects of the GUE proof.

%***************************************************************
\subsection{The First Moment in the GUE Case}

As in the CUE and UE cases, one uses the method of Gaudin to express
the expected value of $\G$ as a two-dimensional integral.
\begin{eqnarray*}
  E(\G) & = & \int\int_{\RR^2} g_2(u,t)
    \left[
      \begin{array}{c c}
        K_n(u,u) & K_n(u,t) \\
        K_n(t,u) & K_n(t,t)
      \end{array}
    \right]
    du dt \\
  & = & \frac12 \int\int_{\Omega}
    \left[
      \begin{array}{c c}
        K_n(u,u) & K_n(u,t) \\
        K_n(t,u) & K_n(t,t)
      \end{array}
    \right]
    du dt,
\end{eqnarray*}
where $\Omega$ is the region
\begin{equation*}
  \Omega = \left\{ (u,t) \quad \mbox{ s.t. } \quad
    \begin{array}{c}
      \frac{u+t}{2} \in I \\
      |t - u| < \gamma
    \end{array} \right\}.
\end{equation*}

We state without proof an expression for the projection kernel:
\begin{eqnarray*}
  K_n(x,y) & = & \frac{e^{-\frac{(x^2+y^2)}{2}}}{2^n(n-1)!\sqrt{\pi}}
      \left(\frac{H_n(x) H_{n-1}(y) - H_{n-1}(x) H_n(y)}{x-y}\right),
\end{eqnarray*}
where $H_n(x)$ is the $(n)$th Hermite polynomial.  This expression is
an instance of the Christoffel-Darboux formula, which we
encountered in the UUE case.  The only challenge in the above formula
is to derive the correct normalizing constant.

As compared to the $U_n$ case, it is now slightly more difficult to
estimate $E(\G)$ because the kernel $K_n(x,y)$ no longer depends
solely on the difference $(y-x)$.

As in the case of UUE (Subsection~\ref{firstUUE}), we change
variables and expand the integrand in a Taylor series:

The region $\Omega$ is narrow in the $(t-u)$ direction.
This suggests expanding the integrand in a Taylor series.
Let $x=\frac{u+t}{2}$ and $y=\frac{u-t}{2}$.  Then
$dx dy = \frac12 du dt$.
In terms of the new variables $x$ and $y$, the first moment is:
\begin{eqnarray*}
  E(\G) & = & \int_{I} \int_{-\frac12\gamma}^{\frac12\gamma}
    \left(  y^2 \left(K \Kff - K \Kbb - {\Kf}^2\right)
      + \bigO(y^4 c_0 c_4 + y^8c_4c_4)
    \right)
    dx dy,
\end{eqnarray*}
where
and $c_j$ is the maximum over $\Omega$ of any $(j)$th partial
derivative of $K_n(x,y)$. Using the Lemma~\ref{trickyLemma} instead
of Lemma~\ref{derivativeBoundsUniversal}, the
integral of $\bigO(y^4 c_0 c_4)$ and $\bigO(y^5c_4c_4)$ over $\Omega$ are
$\bigO\left( |I| \gamma^5 n^3 \right)$ and $\bigO\left( |I| \gamma^9 n^5 \right)$
respectively.

We explicitly expand the integrand in terms of Hermite polynomials.
After collecting similar terms,
\begin{eqnarray*}
  E(\G) & = & \bigO\left( |I| \gamma^5 n^3 + |I|\gamma^9 n^5 \right)
    + \int_{I} \frac{\gamma^3}{12}
    \left(\frac{e^{-2t^2}}{2^{2n}(n-1)!^2\pi} \right) \times \\
  & & \quad \left[ \begin{array}{c}
        2(H_{n-1}H_n' - H_nH_{n-1}') (H_{n-1}'H_n'' - H_n'H_{n-1}'') \\
        + \frac23 (H_{n-1}H_n''' - H_nH_{n-1}''')(H_{n-1}H_n' - H_nH_{n-1}') \\
        -(H_nH_{n-1}'' - H_{n-1}H_n'')^2
      \end{array} \right] dt.
\end{eqnarray*}
All Hermite polynomials or their derivatives are evaluated at
$t$; this is omitted to save space.
In the next section, we will use the Plancherel Rotach asymptotics to derive an
approximation to the integrand valid as $n \to \infty$.

%***************************************************************
\subsection{Application of Plancherel Rotach Asymptotics to the
First Moment}

Let's use Plancherel Rotach to estimate $E(\G)$.
Unlike other systems of orthogonal polynomials (such as we will encounter in the
case of universal unitary ensembles), the Hermite polynomials can
be differentiated easily.  For fixed $k$ as $n\to \infty$:
\begin{equation*}
  H_{n-k}'(x) = 2(n-k)H_{n-k-1}(x) = 2nH_{n-k-1}(x)(1 + \bigO(n^{-1})).
\end{equation*}
This shortcut for differentiating Hermite polynomials allows us to simplify the integrand from
the most recent formula for $E(\G)$:
\begin{eqnarray*}
  E(\G) & = & \bigO\left( |I| \gamma^5 n^3 + |I|\gamma^9 n^5 \right)
    + \int_{I} \frac{4\gamma^3}{3} \left(\frac{e^{-2t^2}}{2^{2n}(n-1)!^2\pi} \right) n^4
       \left(1 + \bigO(n^{-1})\right) \\
  & & \qquad \times  \left[ \begin{array}{c}
        2(H_{n-1}^2 - H_nH_{n-2}) (H_{n-2}^2 - H_{n-1}H_{n-3}) \\
        + \frac23 (H_{n-1}H_{n-3} - H_nH_{n-4})(H_{n-1}^2 - H_nH_{n-2}) \\
        -(H_nH_{n-3} - H_{n-1}H_{n-2})^2
      \end{array} \right] dt.
\end{eqnarray*}

Notice that we factored the $(1+\bigO(n^{-1}))$ outside of the brackets
in order to save space.  Technically this is dangerous because
of the possibility that the terms inside brackets interfere destructively,
but we will see that destructive interference does not occur in our
case.

We use the Plancherel Rotach asymptotics in Theorem~\ref{PlancherelRotachTheorem}
to write the Hermite polynomials as an envelope times a phase function and
estimate each of the above groupings.  For the first grouping, the result is:
\begin{eqnarray*}
  & & (H_{n-1}H_{n-1} - H_nH_{n-2}) = e^{t^2} 2^{n} (n-1)^{n-1} e^{1-n} \left( 1-\frac{t^2}{2n} \right)^{-\frac12} \\
  & & \qquad \times \left[ \Re(\Phi(,n-1))^2 - \Re(\Phi(t,n))\Re(\Phi(t,n-2)) + \bigO(n^{-1}) \right].
\end{eqnarray*}

We will now use Lemma~\ref{consecutivePRA}.  Let
\begin{eqnarray*}
  u & = & \Phi(x,n) \\
  v & = & \left( \frac{x - i\sqrt{2n-x^2}}{\sqrt{2n}} \right).
\end{eqnarray*}
The in terms of these variables,
\begin{eqnarray*}
  \Phi(x,n) & = & u \\
  \Phi(x,n-1) & = & uv \\
  \Phi(x,n-2) & = & uv^2
\end{eqnarray*}
Using the identity $\Re[uv]^2 -\Re[u]\Re[uv^2] = \Im[v]^2$ for
unimodular $u$ and $v$, we have
\begin{equation*}
  (H_{n-1}^2 H_n H_{n-2}) = e^{t^2} 2^n (n-1)^{n-1} e^{1-n} \left(1 - \frac{t^2}{2n} \right)^{\frac12} \left[1 + \bigO(n^{-1}) \right].
\end{equation*}
Applying the same techniques to other groupings,
\begin{eqnarray*}
  (H_{n-1}^2 - H_nH_{n-2}) & = & e^{t^2} 2^n (n-1)^{n-1} e^{1-n}
      s^{-1} \left(s^2\right) \left[1 + \bigO(n^{-1}) \right]  \\
  (H_{n-2}^2 - H_{n-1}H_{n-3}) & = &  e^{t^2} 2^{n-1} (n-2)^{n-2} e^{2-n}
      s^{-1} \left(s^2\right) \left[1 + \bigO(n^{-1}) \right]  \\
  (H_{n-1}H_{n-3} - H_nH_{n-4}) & = &  e^{t^2} 2^{n-1} (n-2)^{n-2} e^{2-n}
      s^{-1} \left(3s^2 - 4s^4\right) \left[1 + \bigO(n^{-1}) \right]  \\
  (H_nH_{n-3} - H_{n-1}H_{n-2}) & = &  e^{t^2} 2^{n-\frac12} n^{\frac{n}{2}} (n-\frac32)^{n-\frac32} e^{\frac32 - n}
      s^{-1} \left(-2cs^2\right) \left[1 + \bigO(n^{-1}) \right]  \\
\end{eqnarray*}
where
\begin{eqnarray*}
  s & = & \sqrt{1-\frac{t^2}{2n}} \\
  c & = & t/\sqrt{2n}.
\end{eqnarray*}
Incorporating these newly derived asymptotics into the recent expression for $E(\G)$ yields:
\begin{eqnarray*}
  E(\G) & = & \bigO\left( I \gamma^5 n^3 + |I|\gamma^9 n^5 \right)
    + \int_{I} \frac{4\gamma^3 n^2}{3} s^{-2}
       \left(1 + \bigO(n^{-1})\right)
    \left[ \begin{array}{c}
      2(s^2)(s^2) \\
      \frac23 (s^2)(3s^2-4s^4) \\
      - (-2c s^2)^2 + \bigO(n^{-1})
    \end{array} \right] dt \\
  & = & \bigO\left( I \gamma^5 n^3 + |I|\gamma^9 n^5 \right)
    + \int_{I} \frac{4\gamma^3 n^2}{3} s^{-2}
       \left(1 + \bigO(n^{-1})\right)
    \left[\frac{4}{3} s^6 \right] dt
\end{eqnarray*}
Recall the Wigner Semicircle law from Appendix~\ref{WignerSemicircleLaw}
which says that
\begin{equation*}
  K_n(t,t) = \frac{\sqrt{2n}}{\pi}
    \sqrt{1 - \frac{t^2}{2n}} (1 + \bigO(n^{-1})).
\end{equation*}
We therefore see that:
\begin{equation*}
  E(\G) = \bigO\left( I \gamma^5 n^3 + |I|\gamma^9 n^5 \right) +
    \frac{\pi^2 \gamma^3}{9} \int_{I} K_n(t,t)^4 (1 + \bigO(n^{-1})) dt.
\end{equation*}
This agrees with our heuristic prediction.

%***************************************************************
\subsection{Controlling the Derivatives of $K_n(x,y)$ in the GUE Case}

The error terms in our estimates of the moments of $E(\G^k)$ are
bounded in terms of derivatives of $K_n(x,y)$. We now bound these
derivatives. Unlike the projection kernels of universal ensembles
to come, we can now exploit a shortcut when differentiating the
orthogonal polynomials.  In this case the orthogonal polynomials
are Hermite polynomials, and the shortcut is
\begin{equation*}
  \delx H_n(x) = 2 n H_{n-1}(x).
\end{equation*}

The following Theorem is the GUE version of
Lemma~\ref{derivativeBounds}.

\begin{lem} \label{trickyLemma}
  Fix $\epsilon > 0$.  For $(x,y)$ in the region
  \begin{equation*}
    R = \fatreg\times\fatreg,
  \end{equation*}
  a mixed partial derivative of $K_n(x,y)$
  of total degree k is $\bigO\left( n^{\frac{1+k}{2}} \right)$.
  When $|x-y| \geq n^{-\frac12}$, we have the stronger bound
  $K_n(x,y) = \bigO\left( \frac{1}{|x-y|} n^{\frac{k}{2}} \right)$.
  The constants implied by $\bigO$ depend only on $k$ and $\epsilon$.
\end{lem}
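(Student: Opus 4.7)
The plan is to adapt the proof of Lemma~\ref{derivativeBoundsUniversal} to the Hermite setting, exploiting the GUE-specific shortcut $H_n'(x)=2nH_{n-1}(x)$ and the Plancherel-Rotach asymptotics in place of the Deift-Zhou asymptotics. By Christoffel-Darboux,
\begin{equation*}
  K_n(x,y) = c_n\,\frac{\eta_n(x)\eta_{n-1}(y)-\eta_{n-1}(x)\eta_n(y)}{x-y},
\end{equation*}
with $c_n=O(\sqrt{n})$. Taking $k$ mixed partials symbolically produces a finite sum of terms
\begin{equation*}
  c_n\,C_{k_1,k_2,k_3}\,\frac{\eta_n^{(k_1)}(x)\eta_{n-1}^{(k_2)}(y)-\eta_{n-1}^{(k_1)}(x)\eta_n^{(k_2)}(y)}{(x-y)^{1+k_3}},
\end{equation*}
with $k_1+k_2+k_3=k$ and integer constants independent of $n$. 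The key input from Plancherel-Rotach, via the shortcut $H_{n-j}'(x)=2(n-j)H_{n-j-1}(x)$ iterated, is that for $x\in[-(\sqrt{2}-\epsilon)\sqrt{n},(\sqrt{2}-\epsilon)\sqrt{n}]$ each derivative costs a factor of $\sqrt{n}$: one shows $\eta_{n-j}^{(m)}(x)=O(n^{m/2-1/4})$ uniformly in $j=O(1)$.

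First, I would treat the bulk region $R_{bulk}=\{(x,y)\in R:|x-y|\geq n^{-1/2}\}$. Here each term in the symbolic expansion is bounded by
\begin{equation*}
  \sqrt{n}\cdot\frac{n^{k_1/2-1/4}\cdot n^{k_2/2-1/4}}{|x-y|^{1+k_3}}=\frac{n^{(k_1+k_2)/2}}{|x-y|^{1+k_3}}.
\end{equation*}
Since $|x-y|\geq n^{-1/2}$, we can absorb $k_3$ powers of $1/|x-y|$ into $n^{k_3/2}$, obtaining the desired bound $O(n^{k/2}/|x-y|)$. This in particular gives the weaker $O(n^{(k+1)/2})$ bound on the boundary $|x-y|=n^{-1/2}$.

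Second, I would handle the near-diagonal region $R_{diag}=\{(x,y)\in R:|x-y|<n^{-1/2}\}$, which is the main obstacle because the denominator $(x-y)^{1+k_3}$ is apparently singular while $K_n(x,y)=\sum_{j=0}^{n-1}\eta_j(x)\eta_j(y)$ is in fact smooth along the diagonal. I will follow exactly the strategy used for Lemma~\ref{derivativeBoundsUniversal}: in each term, expand $\eta_{n-1}^{(k_2)}(y)$ (or $\eta_n^{(k_2)}(y)$) in a Taylor series about $x$ up to order $k_3$ with a Lagrange remainder. The remainder term absorbs the full denominator and yields a contribution of size $\sqrt{n}\cdot n^{k_1/2-1/4}\cdot n^{(k_2+k_3+1)/2-1/4}=O(n^{(k+1)/2})$, within the asserted bound. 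The non-remainder Taylor terms produce expressions of the form $e^{-nV(x)}p_j(x)/(y-x)^{1+k_3-j}$ with $p_j$ polynomial; summed over all contributions to $\partial^k K_n(x,y)$, the singular terms must vanish identically because $K_n$ is smooth on the diagonal, exactly as in the universal case.

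The hardest step is verifying this cancellation of the apparently singular contributions in $R_{diag}$; once one accepts the universal argument that the global smoothness of $K_n(x,y)$ forces the polynomials $p_j(x)$ to be identically zero for $0\leq j\leq k_3$, the rest is bookkeeping with the Plancherel-Rotach amplitude $n^{-1/4}$ and the derivative scaling $\sqrt{n}$. The constant depends on $k$ through the combinatorial constants $C_{k_1,k_2,k_3}$ and on $\epsilon$ through the uniform Plancherel-Rotach bounds on $[-(\sqrt{2}-\epsilon)\sqrt{n},(\sqrt{2}-\epsilon)\sqrt{n}]$, exactly as claimed.
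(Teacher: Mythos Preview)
Your approach is correct and follows the same overall strategy as the paper: symbolic differentiation of the Christoffel--Darboux form, a split into $R_{bulk}$ and $R_{diag}$, Taylor expansion in $y$ near the diagonal, and the observation that the apparently singular remainder must vanish identically because $K_n(x,y)=\sum_{j}\eta_j(x)\eta_j(y)$ is smooth there.

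The one noteworthy difference is a bookkeeping choice. The paper works directly with the raw Hermite polynomials and the explicit prefactor $e^{-(x^2+y^2)/2}$, so that each $\partial_x$ or $\partial_y$ can hit the Gaussian and produce a polynomial factor $q_{k_1}(x,y)$ depending on \emph{both} variables. In the diagonal region this forces the paper to Taylor expand not only $H_{n-k_3}(y)$ but also $q_{k_1}(x,y)$ about $y=x$, and to track a five-index invariant $(k_0,k_1,k_2,k_3,k_4)$. By contrast, you absorb the Gaussian into $\eta_n,\eta_{n-1}$ from the start, so the symbolic derivative has only three indices $(k_1,k_2,k_3)$ and the diagonal argument is literally identical to that of Lemma~\ref{derivativeBoundsUniversal}. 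Your route is a little cleaner; the price is that the bound $\eta_{n-j}^{(m)}(x)=O(n^{m/2-1/4})$ does not follow from the Hermite shortcut $H_n'=2nH_{n-1}$ alone, since $\eta_n'=\sqrt{2n}\,\eta_{n-1}-x\,\eta_n$ also picks up the $-x$ term, and one needs $x=O(\sqrt n)$ on the region to see that this too costs only a factor $\sqrt n$. You allude to this but should make it explicit. Also, the singular pieces in $R_{diag}$ are of the form $e^{-x^2}p_j(x)/(y-x)^{1+k_3-j}$ in the GUE normalization, not $e^{-nV(x)}p_j(x)$; this is purely notational.
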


\begin{proof}
  Recall the closed form expression for $K_n(x,y)$:
  \begin{eqnarray*}
    K_n(x,y) & = & \frac{e^{-\frac{(x^2+y^2)}{2}}}{2^n(n-1)!\sqrt{\pi}}
        \frac{H_n(x) H_{n-1}(y) - H_{n-1}(x) H_n(y)}{x-y}
  \end{eqnarray*}

  Take $k$ partial derivatives of $K_n(x,y)$.  The result is a sum of several terms
  of the form:
  \begin{equation*}
    \frac{e^{-\frac{(x^2+y^2)}{2}}}{2^n(n-1)!\sqrt{\pi}}
    \frac{p_{k_0}(n) q_{k_1}(x,y) H_{n-k_2}(x) H_{n-k_3}(y)}{(x-y)^{k_4}},
  \end{equation*}
  where $p_{k_0}(n)$ and $q_{k_1}(x,y)$ are integral polynomials
  of total degrees $k_0$ and $k_1$ respectively.
  Before taking any derivatives, i.e. $k=0$, there are two terms, each
  with $(k_0,k_1,k_2+k_3,k_4) = (0,0,1,1)$.  Each new partial derivative
  can hit in one of four places, and increments the worst-case
  vector $(k_1,k_2+k_3,k_4)$.  Here are the four cases:
  \begin{itemize}
    \item It hits $e^{\frac{-(x^2+y^2)}{2}}$.  The increment is $(0,1,0,0)$.
    \item It hits $q$.  The increment is $(0,-1,0,0)$.
    \item It hits a Hermite polynomial.  The increment is $(1,0,1,0)$.
    \item It hits the denominator.  The increment is $(0,0,0,1)$.
  \end{itemize}

  We divide the region $R$ into two regions
  \begin{eqnarray*}
    R_{diag} & = & \left\{ (x,y) \in R \left| |x-y| < \frac{1}{\sqrt{n}}  \right. \right\} \\
    R_{bulk} & = & \left\{ (x,y) \in R \left| |x-y| \geq \frac{1}{\sqrt{n}}  \right. \right\}
  \end{eqnarray*}

  Our estimates are straightforward in the bulk region $R_{bulk}$.
  Consider any of the finitely many terms resulting from taking $k$ partial
  derivatives of $K_n(x,y)$:
  \begin{eqnarray*}
    & & \frac{e^{-\frac{(x^2+y^2)}{2}}}{2^n(n-1)!\sqrt{\pi}}
        \frac{p_{k_0}(n) q_{k_1}(x,y) H_{n-k_2}(x) H_{n-k_3}(y)}{(x-y)^{k_4}} \\
    & & \quad = \bigO\left( \frac{e^{-\frac{x^2+y^2}{2}}}{2^n n^{n-1} e^{-n} \sqrt{n}}
          \frac{ n^{k_0} n^{\frac{k_1}{2}}
              2^{\frac{n}{2}} n^{\frac{n-k_2}{2}} e^{-\frac{n}{2}} e^{\frac{x^2}{2}}
              2^{\frac{n}{2}} n^{\frac{n-k_3}{2}} e^{-\frac{n}{2}} e^{\frac{y^2}{2}}
          }{|x-y| n^{-\frac{k_4-1}{2}}}
      \right) \\
    & & \quad = \bigO\left( \frac{1}{|x-y|} n^{\frac{2k_0 + k_1 - k_2 - k_3 + k_4}{2}} \right) \\
    & & \quad = \bigO\left( \frac{1}{|x-y|} n^{\frac{1+k}{2}} \right) \\
    & & \quad = \bigO\left( n^{\frac{1+k}{2}} \right)
  \end{eqnarray*}

  Estimates for the diagonal region $R_{diag}$ are more subtle.

  Consider the term
  \begin{equation*}
    \frac{e^{-\frac{(x^2+y^2)}{2}}}{2^n(n-1)!\sqrt{\pi}}
    \frac{p_{k_0}(n) q_{k_1}(x,y) H_{n-k_2}(x) H_{n-k_3}(y)}{(x-y)^{k_4}}.
  \end{equation*}
  We expand $H_{n-k_3}(y)$ and $q_{k_1}(x,y)$ as a Taylor series centered at $x$:
  \begin{eqnarray*}
    q_{k_1}(x,y) & = & q_{k_1}(x,x) + (y-x) \left(\partial_2 q_{k_1}(x,x)\right) + \dots \\
      & & \quad + \frac{(y-x)^{k_4-1}}{(k_4-1)!} \left(\partial_2^{k_4-1} q_{k_1}(x,x)\right)
        + \frac{(y-x)^{k_4}}{k_4!} \left(\partial_2^{k_4} q_{k_1}(x,x_1)\right) \\
    H_{n-k_3}(y) & = & H_{n-k_3}(x) + (y-x)H_{n-k_3}'(x) + \dots \\
      & & \quad + \frac{(y-x)^{k_4-1}}{(k-1)!} H_{n-k_3}^{k_4-1}(x)
        + \frac{(y-x)^{k_4}}{k!} H_{n-k_3}^{k_4}(x_3),
  \end{eqnarray*}
  for some $x_1 \in (x,y)$ and $x_3 \in (x,y)$.
  Here's what happens
  if we select the remainder term from the expansion of $H_{n-k_3}(y)$, and a lower order term,
  $0 \leq j_1 < k_4$, in the expansion of $q_{k_1}(x,y)$:
  \begin{eqnarray*}
    & & \frac{e^{-\frac{(x^2+y^2)}{2}}}{2^n(n-1)!\sqrt{\pi}}
        \frac{p_{k_0}(n)
        \left( \frac{(y-x)^{j_1}}{j_1!} \left(\partial_2^{j_1} q_{k_1}(x,x)\right) \right)
            H_{n-k_2}(x) \left( \frac{(y-x)^{k_4}}{k_4!}H_{n-k_3}^{k_4}(x_3)\right) }
          {(x-y)^{k_4}} \\
    & & \quad = \bigO\left(   n^{ \frac{1+2k_0+k_1-j_1-k_2-k_3+k_4}{2} }   \right)
        = \bigO\left( n^{\frac{1+k-j_1}{2}} \right)
  \end{eqnarray*}
  Here's what happens when we take a lower order term, $0\leq j_3 < k_4$, from
  the expansion of $H_{n-k_3}(y)$, and the remainder term for the expansion
  of $q_{k_1}(x,y)$:
  \begin{eqnarray*}
    & & \frac{e^{-\frac{(x^2+y^2)}{2}}}{2^n(n-1)!\sqrt{\pi}}
        \frac{p_{k_0}(n)
        \left( \frac{(y-x)^{k_4}}{k_4!} \left(\partial_2^{k_4} q_{k_1}(x,x)\right) \right)
            H_{n-k_2}(x) \left( \frac{(y-x)^{j_3}}{j_3!}H_{n-k_3}^{j_3}(x_3)\right) }
          {(x-y)^{k_4}} \\
    & & \quad = \bigO\left(   n^{ \frac{1+2k_0+k_1-k_2-k_3}{2} }   \right)
        = \bigO\left( n^{\frac{1+k-k_4}{2}} \right)
  \end{eqnarray*}
  A term with lower order terms $j_1$ and $j_3$ with $j_1+j_3\geq k_4$
  also make a small contribution,
  \begin{equation*}
    \bigO\left( n^{\frac{1+2k_0+k_1-k_2-k_3+k_4-j_1}{2}} \right) = \bigO\left( n^{\frac{1+k-j_1}{2}}   \right).
  \end{equation*}

  Most subtle of all are the remaining terms
  \begin{equation*}
    \frac{e^{-\frac{(x^2+y^2)}{2}}}{2^n(n-1)!\sqrt{\pi}}
        \frac{p_{k_0}(n)
        \left( \frac{(y-x)^{j_1}}{j_1!} \left(\partial_2^{j_1} q_{k_1}(x,x)\right) \right)
            H_{n-k_2}(x) \left( \frac{(y-x)^{j_3}}{j_3!}H_{n-k_3}^{j_3}(x) \right)  }
          {(x-y)^{k_4}},
  \end{equation*}
  where $j_1 + j_3 < k_4$.
  Adding all of these terms together, we obtain, for each $1\leq l \leq k_4$,
  a polynomial in $x$ times
  \begin{equation*}
    \frac{e^{-\frac{x^2+y^2}{2}}}{(x-y)^l}.
  \end{equation*}
  Since $K_n(x,y)$ is smooth, there are no singularities on the diagonal, and all
  these polynomials in $x$ must vanish identically.  Thus all the terms
  with $j_1+j_3 < k_4$ cancel out against each other.
\end{proof}

%***************************************************************
\subsection{Derivation of Plancherel-Rotach Asymptotics}

We will use the method of steepest descent (see~\cite{bender})
to prove the Plancherel Rotach asymptotics
for Hermite polynomials.

\begin{thm}[Plancherel-Rotach] \label{PlancherelRotachTheorem}
  Fix $\epsilon >0$, and let $(x,n)$ be a pair for which
  $|x| < (\sqrt{2} - \epsilon)\sqrt{n}$.  Then:
  \begin{equation*}
    H_n(x) = e^{\frac{x^2}{2}} 2^{\frac{n+1}2} n^{\frac{n}{2}} e^{-\frac{n}{2}}
       \left(1-\frac{x^2}{2n} \right)^{-\frac14} \left[ Re(\Phi(x,n)) + \bigO(n^{-1}) \right],
  \end{equation*}
  where the phase $\Phi(x,n)$ is:
  \begin{equation*}
    \Phi(x,n) = e^{i \frac{\pi}{4}}
            \left( \frac{x}{\sqrt{2n}} + i \sqrt{1 - \frac{x^2}{2n} }  \right)^{n-\frac12}
            e^{-\frac12 i x \sqrt{2n-x^2}}
  \end{equation*}
  The constant implied by $\bigO$ depends only on $\epsilon$.
\end{thm}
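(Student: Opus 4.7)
The plan is to apply the classical method of steepest descent to the contour integral representation
\[
H_n(x) = \frac{n!}{2\pi i} \oint \frac{e^{2xz - z^2}}{z^{n+1}}\, dz,
\]
where the contour encircles the origin once counterclockwise. Writing the integrand as $e^{f(z)}/z$ with $f(z) = 2xz - z^2 - n\log z$, the saddle equation $f'(z) = 0$ becomes $2z^2 - 2xz + n = 0$, with complex conjugate roots
\[
z_\pm \;=\; \frac{x \pm i\sqrt{2n - x^2}}{2} \;=\; \sqrt{n/2}\; e^{\pm i\theta}, \qquad \cos\theta = \frac{x}{\sqrt{2n}}.
\]
The hypothesis $|x| < (\sqrt{2}-\epsilon)\sqrt{n}$ keeps $\theta$ in a compact subinterval of $(0,\pi)$, so that the two saddles are separated by a distance of order $\sqrt{n}$ and $\sin\theta$ is bounded below by a positive constant depending only on $\epsilon$.

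Next I would deform the original contour into the union of two steepest-descent paths, one through each saddle, and apply the standard Gaussian approximation at each. Using $z_\pm^2 = xz_\pm - n/2$ to eliminate the quadratic term gives
\[
f(z_\pm) \;=\; \frac{x^2}{2} \pm \frac{i x}{2}\sqrt{2n - x^2} + \frac{n}{2} - \frac{n}{2}\log(n/2) \mp in\theta,
\]
and $f''(z_\pm) = -2 + n/z_\pm^{2} = -4i\sin\theta\; e^{\mp i\theta}\bigl(1 + \bigO(n^{-1})\bigr)$. The saddle-point formula therefore yields
\[
H_n(x) = \frac{n!}{2\pi i}\!\left[ \frac{e^{f(z_+)}}{z_+}\sqrt{\frac{2\pi}{-f''(z_+)}} \;-\; \frac{e^{f(z_-)}}{z_-}\sqrt{\frac{2\pi}{-f''(z_-)}} \right]\bigl(1 + \bigO(n^{-1})\bigr),
\]
where the relative sign comes from the orientations of the two descent paths and the branches of the square roots are fixed by the steepest-descent directions at the saddles.

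Because $z_- = \overline{z_+}$, the two bracketed contributions are complex conjugates of each other, so the bracket reduces to twice the real part of a single expression. Applying Stirling's approximation $n! = \sqrt{2\pi n}\,(n/e)^n\bigl(1 + \bigO(n^{-1})\bigr)$ and combining the modulus factors $|z_\pm|^{-1} = \sqrt{2/n}$, $|f''(z_\pm)|^{-1/2} = (2\sin\theta)^{-1/2}$, together with $\sin\theta = \sqrt{1 - x^2/(2n)}$, the amplitude consolidates precisely to $e^{x^2/2}\,2^{(n+1)/2}\,n^{n/2}\,e^{-n/2}\bigl(1 - x^2/(2n)\bigr)^{-1/4}$. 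The phase bookkeeping identifies the remaining oscillatory factor with $\Phi(x,n)$, using $x/\sqrt{2n} + i\sqrt{1 - x^2/(2n)} = e^{i\theta}$ so that its $(n-\frac12)$th power contributes the phase $(n-\frac12)\theta$; the half-integer shift and the overall $e^{i\pi/4}$ come from the extra $1/z$ in the integrand and from $\sqrt{-f''(z_+)} = 2\sqrt{\sin\theta}\, e^{i(\pi/2 - \theta)/2}$ respectively.

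The principal obstacle will be to make the relative error $\bigO(n^{-1})$ \emph{uniform} on the full range $|x| \leq (\sqrt{2} - \epsilon)\sqrt{n}$, rather than merely pointwise. This requires three controlled estimates: the decay of the integrand along the descent contour outside a shrinking neighborhood of each saddle; the higher-order terms in the Taylor expansion of $f$ at each saddle; and the separation of the saddles, which degenerates at the turning points $x = \pm\sqrt{2n}$. The condition $|x| < (\sqrt{2}-\epsilon)\sqrt{n}$ is used precisely to keep $\sin\theta$ bounded below and $f''(z_\pm)$ of uniform order, so that all three estimates go through with constants depending only on $\epsilon$; everything else is routine saddle-point bookkeeping.
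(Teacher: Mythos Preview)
Your proposal is correct and follows the same method as the paper---steepest descent applied to an integral representation of $H_n$---with one cosmetic difference: the paper starts from the real-axis integral
\[
H_n(z) = \frac{2^n}{\sqrt{\pi}} \int_{-\infty}^{\infty} (z + it)^n e^{-t^2}\, dt
\]
and rescales $w = z/\sqrt{n}$, $u = t/\sqrt{n}$ before locating saddles, whereas you start from the generating-function contour integral. The two representations yield the same pair of saddle points $\tfrac{1}{2}(x \pm i\sqrt{2n-x^2})$ (in your variable $z$; equivalently $z+it_\pm$ in the paper's), the same second derivatives, and the same amplitude--phase bookkeeping. Your treatment of uniformity is at the same level of rigor as the paper's: both note that $\sin\theta$ (equivalently $\sqrt{2-w^2}$) stays bounded away from zero on $|x| \le (\sqrt{2}-\epsilon)\sqrt{n}$ and invoke this to make the $\bigO(n^{-1})$ uniform. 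One small slip: your formula $f''(z_\pm) = -4i\sin\theta\, e^{\mp i\theta}$ is exact (no $\bigO(n^{-1})$ needed), and the sign at $z_-$ should flip, but since the two contributions are complex conjugates this washes out when you take the real part.
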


\begin{proof}
  Begin with the well-known integral representation for Hermite polynomials:
  \begin{equation*}
    H_n(z) = \frac{2^n}{\sqrt{\pi}} \int_{-\infty}^{\infty} (z + it)^n e^{-t^2} dt.
  \end{equation*}
  One may verify this integral representation by checking that $H_0(x)$
  and $H_1(x)$ are correct, and then verifying the three term recurrence
  $H_n(t) = 2tH_{n-1}(t) - 2(n-1)H_{n-2}(t)$ for Hermite polynomials.

  First change variables so that the locations of
  the saddle points will remain constant as
  $n \to \infty$.  Let $w = \frac{z}{\sqrt{n}}$ and
  $u = \frac{t}{\sqrt{n}}$. The integral representation becomes:
  \begin{equation*}
    H_n(\sqrt{n}w) = \frac{2^n n^{\frac{n+1}{2}}}{\sqrt{\pi}} \intline e^{n(\log(w+iu)-u^2)} du.
  \end{equation*}
  For fixed
  $w\in\left(-(\sqrt{2}-\epsilon) ,\sqrt{2}-\epsilon \right)$
  let us apply stationary phase to the
  above integral in the $n \to \infty$ limit.
  The saddle points occur when $\frac{i}{w+iu} - 2u = 0$, or
  \begin{equation*}
    u_{\pm} = \frac12\left(iw \pm \sqrt{2-w^2} \right).
  \end{equation*}
  The second derivatives of the phase function at these points are
  \begin{equation*}
    f_{\pm} = 2\left( w^2-2 \mp i w \sqrt{2-w^2}\right).
  \end{equation*}
  The absolute value of $f_{\pm}$ is:
  \begin{equation*}
    2\sqrt{4-2w^2}.
  \end{equation*}

  \begin{figure}
    \begin{center}
      \includegraphics[scale=0.6]{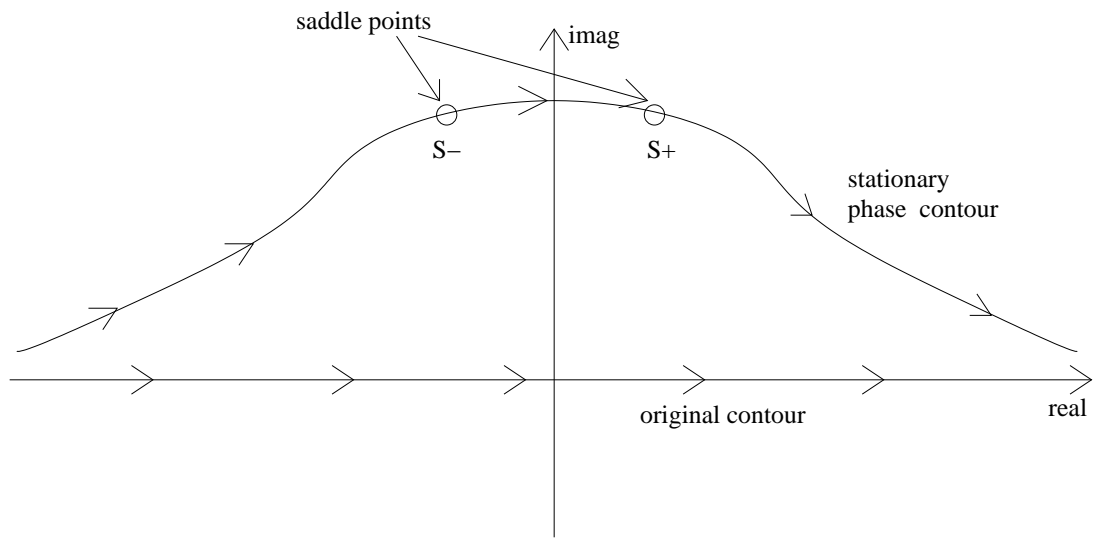}
      \caption{\label{plancherelRotachContour}
          The contour for the integral representation of Hermite polynomials
          deformed so that it passes through the saddle points.}
    \end{center}
  \end{figure}

  The slope of the stationary phase contour at the saddle point $u_{+}$ is
  \begin{equation*}
    \sqrt{\frac{-f_{+}}{|f_{+}|}},
  \end{equation*}
  with the sign chosen to have positive real part.

  The value of the integrand is $v_{\pm}^n$, where
  \begin{equation*}
    v_{\pm} = \left(\frac12 w \pm \frac12 i \sqrt{2-w^2} \right)
        e^{-\frac14 (iw \pm \sqrt{2-w^2})^2}.
  \end{equation*}

  The integral representation for $H_n(\sqrt{n} w)$ has the real
  axis as its contour of integration.  We change this contour to a stationary phase
  contour, that is $\Im(\log(w+iu)-u^2) = const$,
  which passes through the two saddle points.
  The stationary phase contour begins at $-\infty$, hits
  the left saddle point, crosses over to the right saddle point, then
  goes to $+\infty$, and overall looks a bit like a Gaussian.
  Reducing to small neighborhoods $S_{-}$ and $S_{+}$ of the saddle points
  produces only exponentially small errors.

  The contributions from the two neighborhoods are complex conjugates
  of each other, so we focus on $S_{+}$:

  \begin{eqnarray*}
    H_n(\sqrt{n} w) & = & 2\Re \left[ \frac{2^n n^{\frac{n+1}{2}}}{\sqrt{\pi}} \int_{S_{+}}
        e^{n \left( \log(w+i u) - u^2\right)} du \left(1 + \bigO(e^{-c\sqrt{n}})  \right)\right] \\
    & = & 2\Re \left[ \frac{2^n n^{\frac{n+1}{2}}}{\sqrt{\pi}} v_{+}^{n}
        \sqrt{\frac{-f_{+}}{|f_{+}|}} \sqrt{\frac{2\pi}{n|f_{+}|}}
        \left(1 + \bigO(n^{-1})\right) \right] \\
    & = & 2^{\frac{n+1}{2}} n^{\frac{n}{2}} e^{\frac12 n (w^2-1)} \left(1-\frac{w^2}{2}\right)^{-\frac14} \\
    & & \quad \times \quad \Re\left[ \left(\frac{w}{\sqrt{2}} + \frac{i \sqrt{2-w^2}}{\sqrt{2}} \right)^n e^{-\frac12 i w n \sqrt{2-w^2} }
        \sqrt{ \frac{\sqrt{2-w^2}}{\sqrt{2}} + \frac{iw}{\sqrt{2}} }      \right].
  \end{eqnarray*}
  Now recall that $w = \frac{z}{\sqrt{n}}$, and notice that two
  of the terms in the phase function can be combined:
  \begin{eqnarray*}
    H_n(z) & = & 2^{\frac{n+1}{2}} n^{\frac{n}{2}} e^{-\frac{n}{2}} e^{\frac{z^2}{2}}
        \left( 1 - \frac{z^2}{2n}  \right)^{\-\frac14} \\
    & & \quad \times \quad \Re\left[ e^{i \frac{\pi}{4}}
            \left( \frac{z}{\sqrt{2n}} + i \sqrt{1 - \frac{z^2}{2n} }  \right)^{n-\frac12}
            e^{-\frac12 i z \sqrt{2n-z^2}}  + \bigO(n^{-1})
        \right]
  \end{eqnarray*}
\end{proof}

We'll often use Plancherel-Rotach by comparing $H_n(x)$
for consecutive values of $n$.  The following simple
Lemma describes the way in which $\Phi$ varies with $n$:
\begin{lem} \label{consecutivePRA}
  Fix $\epsilon>0$.  Then for pairs $(x,n)$ for which
  $|x| < (\sqrt{2}-\epsilon)\sqrt{n}$,
  \begin{eqnarray*}
    \frac{\Phi(x,n+1)}{\Phi(x,n)} =
      \left(\frac{x+i\sqrt{2n-x^2}}{\sqrt{2n}} \right) + \bigO(n^{-1})
  \end{eqnarray*}
\end{lem}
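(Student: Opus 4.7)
The plan is to analyze the ratio via its logarithm and show that the leading behavior is captured by one base factor of $\Phi$, while the perturbative corrections cancel to within $O(n^{-1})$. Writing $\theta(n) = \arccos(x/\sqrt{2n})$, so that the first factor in $\Phi(x,n)$ is $e^{i(n-1/2)\theta(n)}$, and treating $n$ as a continuous variable, I will compute
\begin{equation*}
  \frac{d}{dn}\log\Phi(x,n) = i\theta(n) + i(n-\tfrac12)\theta'(n) + \frac{d}{dn}\!\left(-\tfrac12 ix\sqrt{2n-x^2}\right).
\end{equation*}

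A direct calculation gives $\theta'(n) = x/(2n\sqrt{2n-x^2})$, so $(n-\tfrac12)\theta'(n) = x/(2\sqrt{2n-x^2}) + O(n^{-1})$. The derivative of the other piece is exactly $-ix/(2\sqrt{2n-x^2})$. These two contributions cancel, leaving
\begin{equation*}
  \frac{d}{dn}\log\Phi(x,n) = i\theta(n) + O(n^{-1}),
\end{equation*}
where the constant is uniform under the hypothesis $|x| < (\sqrt{2}-\epsilon)\sqrt{n}$, which guarantees $\sqrt{2n-x^2} \gtrsim \sqrt{n}$. Applying the mean value theorem to $\log\Phi(x,\cdot)$ on $[n,n+1]$ and using the Lipschitz bound $|\theta(\xi)-\theta(n)| = O(n^{-1})$ on that interval, I conclude that
\begin{equation*}
  \log\Phi(x,n+1) - \log\Phi(x,n) = i\theta(n) + O(n^{-1}).
\end{equation*}
Exponentiating and noting that $e^{i\theta(n)} = (x+i\sqrt{2n-x^2})/\sqrt{2n}$ is unimodular yields the claim.

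An alternative purely algebraic route, which I might prefer for transparency, is to write $w = (x+i\sqrt{2n-x^2})/\sqrt{2n}$ and $w' = (x+i\sqrt{2n+2-x^2})/\sqrt{2n+2}$, so that
\begin{equation*}
  \frac{\Phi(x,n+1)}{\Phi(x,n)} = w \cdot (w'/w)^{n+1/2} \cdot \exp\!\Big(-\tfrac{i}{2}x\bigl(\sqrt{2n+2-x^2} - \sqrt{2n-x^2}\bigr)\Big),
\end{equation*}
then use the conjugate trick $\sqrt{2n+2-x^2}-\sqrt{2n-x^2} = 1/\sqrt{2n-x^2} + O(n^{-3/2})$ and a first-order expansion of $\arccos(x/\sqrt{2n+2}) - \arccos(x/\sqrt{2n}) = x/(2n\sqrt{2n-x^2}) + O(n^{-2})$ to see the two exponential corrections cancel.

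The only delicate point is uniformity of the error. Because $x$ can be as large as $(\sqrt{2}-\epsilon)\sqrt{n}$, factors like $1/\sqrt{2n-x^2}$ can grow like $\epsilon^{-1/2}n^{-1/2}$, so I must check that every implicit constant depends only on $\epsilon$; this is automatic since each step uses only the lower bound $\sqrt{2n-x^2} \geq \sqrt{\epsilon(2\sqrt{2}-\epsilon)}\,\sqrt{n}$. Multiplying the Taylor-expanded $O(n^{-2})$ phase error by the exponent $n+\tfrac12$ produces precisely the stated $O(n^{-1})$ and no worse, which is the sharp obstruction — if one tried the same argument at the edge $|x| \to \sqrt{2n}$ the error would blow up, reflecting the need for a different (Airy-type) asymptotic regime there.
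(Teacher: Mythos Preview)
Your argument is correct. One small point: you invoke the mean value theorem for $\log\Phi(x,\cdot)$, which is complex-valued; but since $|\Phi|=1$ the logarithm is purely imaginary, so the MVT applies to its imaginary part (or equivalently use the fundamental theorem of calculus), and the conclusion stands.

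Your \emph{alternative} algebraic route is exactly the paper's proof: the paper writes out the ratio directly, expands $(w'/w)^n$ to first order as $1 + i\!\left(\tfrac{x}{\sqrt{2n}}\sqrt{1-\tfrac{x^2}{2n+2}} - \tfrac{x}{\sqrt{2n+2}}\sqrt{1-\tfrac{x^2}{2n}}\right) + O(n^{-2})$, and checks that the $O(n^{-1})$ phase this produces cancels against the exponential factor $e^{-\frac{i}{2}x(\sqrt{2n+2-x^2}-\sqrt{2n-x^2})}$, leaving only the single base factor $w$. Your primary approach via the logarithmic derivative is a genuinely different and somewhat cleaner packaging: by differentiating $\log\Phi$ in $n$ you make the cancellation between $(n-\tfrac12)\theta'(n)$ and $-\tfrac{x}{2\sqrt{2n-x^2}}$ transparent in one line, and the integral over $[n,n+1]$ replaces the somewhat fiddly $n$th-power expansion. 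The paper's direct expansion has the virtue of being entirely elementary (no calculus in the auxiliary variable), while your derivative method scales more gracefully if one ever needed higher-order terms.
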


\begin{proof}
  \begin{eqnarray*}
    \frac{\Phi(x,n+1)}{\Phi(x,n)} & = &
        \frac{\left( \frac{z}{\sqrt{2n+2}} + i \sqrt{1 - \frac{z^2}{2n+2} }   \right)^{n+\frac12}}
            {\left( \frac{z}{\sqrt{2n}} + i \sqrt{1 - \frac{z^2}{2n} }  \right)^{n-\frac12}}
        e^{ -\frac12 i z \sqrt{2n+2-z^2} + \frac12 i z \sqrt{2n-z^2} } \\
    & = & \left( 1 + i \frac{z}{\sqrt{2n}}\sqrt{1 - \frac{z^2}{2n+2}}
            - i \frac{z}{\sqrt{2n+2}}\sqrt{1 - \frac{z^2}{2n}} + \bigO(n^{-2})
        \right)^{n} \\
    & & \qquad \times
        \left( \frac{z}{\sqrt{2n+2}} + i \sqrt{1 - \frac{z^2}{2n+2} }   \right)
        e^{-\frac{i z}{2 \sqrt{2n-z^2}}}
        + \bigO(n^{-1}) \\
    & = & \left( \frac{z}{\sqrt{2n}} + i \sqrt{1 - \frac{z^2}{2n} } \right)
        + \bigO(n^{-1})
  \end{eqnarray*}
\end{proof}

%***************************************************************
\subsection{Derivation of Wigner Semicircle Law using Plancherel Rotach Asymptotics}
\label{WignerSemicircleLaw}

We use Plancherel Rotach asymptotics, Theorem~\ref{PlancherelRotachTheorem},
to prove the Wigner semicircle law.  This is similar to our estimation
of $E(\G)$ using Plancherel-Rotach asymptotics.

\begin{lem}
  For fixed $\epsilon > 0$ and $x\in \fatreg$,
  the diagonal of the kernel $K_n(x,y)$ satisfies the following
  asymptotics as $n\to \infty$:
  \begin{equation*}
    K_n(x,x) = \frac{\sqrt{2n}}{\pi}
        \sqrt{1 - \frac{x^2}{2n}} (1 + \bigO(n^{-1})).
  \end{equation*}
\end{lem}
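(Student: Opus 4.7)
The plan is to reduce $K_n(x,x)$ to a closed-form combination of Hermite polynomials and then apply Plancherel-Rotach. Writing the Christoffel-Darboux expression at $y=x$ as a $0/0$ limit, L'H\^opital's rule gives
\begin{equation*}
  K_n(x,x) = \frac{e^{-x^2}}{2^n(n-1)!\sqrt{\pi}}\left[H_n'(x)H_{n-1}(x) - H_{n-1}'(x)H_n(x)\right].
\end{equation*}
The derivative identity $H_m'(x) = 2m\,H_{m-1}(x)$ rewrites the bracket as $2n\,H_{n-1}(x)^2 - 2(n-1)\,H_n(x)H_{n-2}(x)$, a shape tailor-made for Plancherel-Rotach.

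Next I would substitute the asymptotic of Theorem~\ref{PlancherelRotachTheorem} into each of $H_n$, $H_{n-1}$, $H_{n-2}$. The envelope $e^{x^2/2}\,2^{(m+1)/2}\,m^{m/2}\,e^{-m/2}\,(1-x^2/(2m))^{-1/4}$, together with the prefactor $e^{-x^2}/[2^n(n-1)!\sqrt{\pi}]$ and Stirling's formula $(n-1)! = (n-1)^{n-1}e^{-(n-1)}\sqrt{2\pi(n-1)}(1+\bigO(n^{-1}))$, produces the clean normalization $\sqrt{2n}/\pi$ together with a residual envelope of $(1-x^2/(2n))^{-1/2}$. Along the way one needs $n^{n/2}(n-2)^{(n-2)/2} = (n-1)^{n-1}(1+\bigO(n^{-1}))$, which follows from a Taylor expansion of the logarithm about $n-1$.

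For the oscillatory content, I would invoke Lemma~\ref{consecutivePRA} to write $\Phi(x,n-1) = \Phi(x,n)\,v + \bigO(n^{-1})$ and $\Phi(x,n-2) = \Phi(x,n)\,v^2 + \bigO(n^{-1})$, where $v = (x - i\sqrt{2n-x^2})/\sqrt{2n}$ is unimodular. Setting $u = \Phi(x,n)$ and using $2(n-1) = 2n(1+\bigO(n^{-1}))$, the bracket becomes $2n\bigl[\Re(uv)^2 - \Re(u)\Re(uv^2)\bigr] + \bigO(1)$. The elementary identity $\Re(uv)^2 - \Re(u)\Re(uv^2) = \Im(v)^2$ for unimodular $u,v$, the same one already exploited in the first-moment GUE calculation, reduces this to $2n\cdot\Im(v)^2 = 2n - x^2$; combining with the residual envelope $(1-x^2/(2n))^{-1/2}$ leaves exactly $\sqrt{2n}/\pi\cdot\sqrt{1-x^2/(2n)}$ to leading order.

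The main obstacle is the careful bookkeeping of the envelope and Stirling factors, since the prefactors multiplying $H_{n-1}^2$ and $H_n H_{n-2}$ are only equal up to $\bigO(n^{-1})$ corrections. The natural worry is destructive interference between the two bracket terms, but the trigonometric identity shows that the leading-order contribution is genuinely $2n(1-x^2/(2n))$, which stays bounded away from zero uniformly on $|x|\leq(\sqrt{2}-\epsilon)\sqrt{n}$. Because the leading term does not vanish, the $\bigO(n^{-1})$ relative errors from each Plancherel-Rotach and Stirling step propagate cleanly to the final $(1+\bigO(n^{-1}))$ multiplicative error for $K_n(x,x)$.
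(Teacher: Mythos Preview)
Your proposal is correct and follows essentially the same route as the paper: L'H\^opital/continuity to reach $H_n'H_{n-1}-H_{n-1}'H_n$, the derivative identity $H_m'=2mH_{m-1}$, Plancherel--Rotach plus Lemma~\ref{consecutivePRA} to write everything in terms of $u=\Phi(x,n)$ and $v=(x-i\sqrt{2n-x^2})/\sqrt{2n}$, and then the unimodular identity $\Re(uv)^2-\Re(u)\Re(uv^2)=\Im(v)^2$. Your explicit mention of Stirling and of the non-vanishing of the leading term (so that the $\bigO(n^{-1})$ errors stay multiplicative) is a useful elaboration, but the argument is the same.
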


\begin{proof}
  Since $K_n(x,y)$ is
  smooth everywhere, it is continuous on the
  diagonal $x=y$. By continuity,
  \begin{eqnarray*}
    & & K_n(x,x) = \lim_{h\to 0} K_n(x+h,x) \\
    & & \quad = \lim_{h\to 0} \frac{e^{-x^2}}{2^n(n-1)!\sqrt{\pi}}
      \left(\frac{H_n(x+h) H_{n-1}(x) - H_{n-1}(x+h) H_n(x)}{h}\right) \\
    & & \quad = \frac{e^{-x^2}}{2^n(n-1)!\sqrt{\pi}}
      \left(H_n'(x) H_{n-1}(x) - H_{n-1}'(x) H_n(x)\right) \\
    & & \quad = \frac{e^{-x^2}}{2^n(n-1)!\sqrt{\pi}}
      \left(2n H_{n-1}(x)^2 - 2(n-1)H_{n-2}(x) H_n(x)\right) \\
    & & \quad = \frac{\sqrt{2n}}{\pi}\left(1 - \frac{x^2}{2n} \right)^{-\frac12}
      \left(\Re(\Phi(x,n-1))^2 - \Re(\Phi(x,n)) \Re(\Phi(x,n-2)) + \bigO\left(\frac1n \right) \right) \\
  \end{eqnarray*}
  We will now use Lemma~\ref{consecutivePRA}.  Let
  \begin{eqnarray*}
    u & = & \Phi(x,n) \\
    v & = & \left( \frac{x - i\sqrt{2n-x^2}}{\sqrt{2n}} \right).
  \end{eqnarray*}
  The in terms of these variables,
  \begin{eqnarray*}
    \Phi(x,n) & = & u \\
    \Phi(x,n-1) & = & uv \\
    \Phi(x,n-2) & = & uv^2
  \end{eqnarray*}
  Using the identity $\Re[uv]^2 -\Re[u]\Re[uv^2] = \Im[v]^2$ for
  unimodular $u$ and $v$, we have
  \begin{eqnarray*}
    K_n(x,x)  & = & \frac{\sqrt{2n}}{\pi}\left(1 - \frac{x^2}{2n} \right)^{-\frac12}
      \left(\Re(uv)^2 - \Re(u) \Re(uv^2) + \bigO\left(\frac1n \right) \right) \\
    & = & \frac{\sqrt{2n}}{\pi} \left(1-\frac{x^2}{2n} \right)^{\frac12}
      \left(1 + \bigO\left( \frac1n \right) \right) \\
  \end{eqnarray*}
\end{proof}

%***************************************************************
\section{Appendix: Weyl integration}
\label{WeylAppendix}
%***************************************************************

The following discussion of Weyl integration is from~\cite{mehta}, p.62-63.

\begin{lem}
  Suppose that an ensemble of Hermitian matrices has the following
  joint probability density function for the matrix entries:
  \begin{equation*}
    C_n e^{-n \sum V(\theta_i)} dM,
  \end{equation*}
  where $V(x)$ is a potential, the $\theta_i$ are eigenvalues, and
  $C_n$ is a normalization constant.  Then the joint probability
  density function for the eigenvalues is
  \begin{equation*}
    \tilde{C}_n  \prod_{i<j} (\theta_j-\theta_i)^2
      e^{-n \sum V(\theta_i)} d\Theta,
  \end{equation*}
  where $\tilde{C}_n$ is a new normalization constant.
\end{lem}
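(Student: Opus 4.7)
The plan is to change variables from the matrix entries of $M$ to the spectral data $(U,\Theta)$, where $M = U \Theta U^{*}$ with $U$ unitary and $\Theta = \mathrm{diag}(\theta_1,\dots,\theta_n)$. Since $\mathrm{Tr}(V(M)) = \sum V(\theta_i)$ depends only on eigenvalues, the weight $e^{-n\sum V(\theta_i)}$ is $U$-invariant, so the whole content of the lemma is the computation of the Jacobian of this parametrization.

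First I would restrict to the dense open set of Hermitian matrices with simple spectrum. On this set, the spectral decomposition map $(U,\Theta)\mapsto U\Theta U^{*}$, taken modulo the right-action of the diagonal torus $T^n\subset U(n)$ and restricted to the Weyl chamber $\theta_1<\theta_2<\cdots<\theta_n$, is a diffeomorphism onto its image. The complementary locus (repeated eigenvalues) has measure zero and does not affect either marginal density.

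Next I would compute the Jacobian at a diagonal point $U=I$, $M=\Theta$. Writing $U = e^{X}$ with $X$ skew-Hermitian, to first order
\begin{equation*}
  dM = d\Theta + [X,\Theta].
\end{equation*}
The diagonal part of $dM$ is $d\Theta$, contributing $\prod_{i} d\theta_i$. The off-diagonal $(i,j)$ entry of $[X,\Theta]$ equals $X_{ij}(\theta_j-\theta_i)$. Decomposing $X_{ij} = a_{ij}+ib_{ij}$ for $i<j$, the real and imaginary parts of the $(i,j)$ entry of $dM$ each scale by $(\theta_j-\theta_i)$, so the Jacobian of $dM$ with respect to $\{d\theta_i\}\cup\{da_{ij},db_{ij}\}$ contributes the factor $\prod_{i<j}(\theta_j-\theta_i)^2$. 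Because the integrand is $U$-invariant, the $\{a_{ij},b_{ij}\}$ variables (coordinates on a compact quotient of $U(n)$) integrate out to a finite constant, which can be absorbed into the normalization to produce the stated $\tilde{C}_n$.

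The main obstacle is the bookkeeping rather than any deep point: one must be careful about the factors of $2^{n(n-1)/2}$ and $(2\pi)^{\dim}$ that arise when decomposing complex off-diagonal entries into real/imaginary parts and when computing the volume of the unitary quotient, and one must justify that the Weyl-group and torus redundancies in the spectral decomposition only change the overall constant. These are standard considerations in the derivation of the Weyl integration formula, and full details may be found in the reference to Mehta cited in the statement.
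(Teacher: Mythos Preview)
Your proposal is correct and is essentially the same argument as the paper's: both diagonalize $M=U\Theta U^{*}$, compute the infinitesimal change $dM$ in terms of $d\Theta$ and the skew-Hermitian generator of $U$, and read off the factor $\prod_{i<j}(\theta_j-\theta_i)^2$ from the off-diagonal entries of $[X,\Theta]$. The only cosmetic difference is that the paper works at a general $U$ and introduces $S^{(\mu)}=U^{*}\,\partial U/\partial p_{\mu}$, then multiplies the Jacobian by a $U$-dependent matrix to exhibit the factorization, whereas you invoke $U$-invariance of the integrand to reduce to $U=I$ from the outset; these are equivalent viewpoints on the same computation.
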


Let $U$ be a unitary matrix and $\Theta$ be a diagonal matrix so
that
\begin{equation*}
  H = U\Theta U^*.
\end{equation*}
Except on a set of measure zero, the eigenvalues will be distinct.
Assume without loss of generality that the diagonal entries of
$\Theta$ are in ascending order.  We also assume without loss of
generality that the first nonzero entry in each column of $U$ is
positive real.  With these conventions, the $\Theta$ and $U$ are
uniquely determined by $H$, and $H$ is of course uniquely
determined by $U$ and$\Theta$.  Let $p_{\mu}$ be $n(n-1)$ real
variables which specify a unitary matrix.
\begin{equation*}
  J(\theta,p) = \det\left[
    \frac{\partial\left(H_{11}^{(0)},\dots,H_{nn}^{(0)},H_{12}^{(0)},H_{12}^{(1)},\dots \right)}
    {\partial\left( \theta_1,\theta_2,\dots,\theta_n,p_1,p_2,\dots,p_{n(n-1)}   \right)   }
  \right]
\end{equation*}
If one takes $J(\theta,p)$ above, and integrates out the variables
$p$, then multiplying by the joint probability density function
for matrix entries, one has the joint probability density function
for the eigenvalues themselves. It will turn out that
$J(\theta,p)$ is $\prod_{i<j} (\theta_j-\theta_i)^2$ times a function of
$p$, so that all that remains is a function of
$\theta$ times a constant.

We will multiply the above $n^2\times n^2$ matrix for
$J(\theta,p)$ by another matrix of the same dimensions depending
only on $U$, and the determinant of the result will be
$\prod_{i<j} (\theta_j-\theta_i)^2$ times a function depending
only on $U$.

Now $UU^{*}=1$, so for each variable $p_{\mu}$,
\begin{equation*}
  S^{(\mu)} = U^{*} \frac{\partial U}{\partial p_{\mu}} = - \frac{\partial U^{*}}{\partial p_{\mu}} U
\end{equation*}
is conjugate Hermitian.  Then
\begin{equation*}
  U^{*} \frac{\partial H}{\partial p_{\mu}} U = S^{\mu} \Theta -
  \Theta S^{\mu},
\end{equation*}
or
\begin{eqnarray*}
  & & \sum_{j,k} U^{*}_{\alpha,j} \frac{\partial H_{j,k}}{\partial p_{\mu}} U_{k,\beta}
    = S_{\alpha,\beta}^{\mu}(\theta_\beta-\theta_{\alpha}) \\
  & & \quad = \sum_{j} \frac{\partial H_{j,j}}{\partial p_{\mu}}
      \left[ U_{\alpha,j}^* U_{j,\beta}\right]
    + \sum_{j<k} \Re\left( \frac{\partial H_{j,k}}{\partial p_{\mu}} \right)
      \left[ U_{\alpha,j}^* U_{k,\beta} + U_{\alpha,k}^* U_{j,\beta} \right] \\
  & & \quad \qquad + i \sum_{j<k} \Im\left( \frac{\partial H_{j,k}}{\partial p_{\mu}} \right)
      \left[ U_{\alpha,j}^* U_{k,\beta} - U_{\alpha,k}^* U_{j,\beta} \right]
\end{eqnarray*}
Similarly,
\begin{equation*}
  \sum_{j,k} U^{*}_{\alpha,j} \frac{\partial H_{j,k}}{\partial
  \theta_{\gamma}} U_{k,\beta} =
  \delta_{\alpha,\beta,\gamma}.
\end{equation*}

Writing these two equations in matrix form,

\begin{eqnarray*}
  & & \quad \left[ \begin{array}{c c c}
      \frac{\partial H_{jj}^{(0)}}{\partial \theta_{\gamma}}
        & \frac{\partial H_{jk}^{(0)}}{\partial \theta_{\gamma}}
        & \frac{\partial H_{jk}^{(1)}}{\partial \theta_{\gamma}} \\
      \frac{\partial H_{jj}^{(0)}}{\partial p_{\mu}}
        & \frac{\partial H_{jk}^{(0)}}{\partial p_{\mu}}
        & \frac{\partial H_{jk}^{(1)}}{\partial p_{\mu}} \\
    \end{array} \right] \\
  & & \qquad \times \quad \left[ \begin{array}{c c c}
      [U_{\alpha,j}^{*} U_{j,\alpha}]
        & \Re[U_{\alpha,j}^{*} U_{j,\alpha}]
        & \Im[U_{\alpha,j}^{*} U_{j,\alpha}] \\

      \ [U_{\alpha,j}^{*} U_{k,\beta} + U_{\alpha,k}^{*} U_{j,\beta} ]
        & \Re[U_{\alpha,j}^* U_{k,\beta} + U_{\alpha,k}^* U_{j,\beta} ]
        & \Im[U_{\alpha,j}^* U_{k,\beta} + U_{\alpha,k}^* U_{j,\beta} ] \\
      \ [U_{\alpha,j}^* U_{k,\beta} - U_{\alpha,k}^* U_{j,\beta} ]
        & -\Im[U_{\alpha,j}^* U_{k,\beta} - U_{\alpha,k}^* U_{j,\beta} ]
        & \Re[U_{\alpha,j}^* U_{k,\beta} - U_{\alpha,k}^* U_{j,\beta} ]
    \end{array} \right] \\
  & & \quad = \quad \left[ \begin{array}{c c c}
      I & 0 & 0 \\
      0 & \Re(S_{\alpha,\beta}^{\mu}) (\theta_{\beta}-\theta_{\alpha})
        & \Im(S_{\alpha,\beta}^{\mu}) (\theta_{\beta}-\theta_{\alpha})
    \end{array} \right]
\end{eqnarray*}

%***************************************************************
\section{Appendix: The Method of Gaudin}
\label{intoutapp}
%***************************************************************

Suppose that $\Fsp$ is a symmetric function
of $n$ variables, which only depends on the variables $m$ at a
time.  Applying $\Fs$ to the eigenvalues of an \nbyn
matrix chosen randomly from some ensemble makes $\Fs$ a random variable.  We
express the expected value of $\Fs$ as an $m$ dimensional
integral.

%***************************************************************
\subsection{The Universal Unitary Ensemble}

Let $V(x)$ be a real-analytic potential function growing sufficiently
rapidly at infinity.  Then for the ensemble $UUE_n$, the j.p.d.f.
of the eigenvalues is
\begin{equation*}
  \kapone  \prod_{0\leq j<k<n} (t_j-t_k)^2 e^{-n \sum V(t_j)} dT,
\end{equation*}
where $\kapone$ is a constant depending only on $V(x)$ and $n$.
See Appendix~\ref{WeylAppendix}.

This leads to an expression for the expected value of $\Fs$.
\begin{eqnarray*}
  E(\Fs) & = & \kapone \int \Fs(t_1,\dots,t_n) \\
  & & \quad \times
      \prod_{0\leq j<k<n} (t_j-t_k)^2
      e^{-n\sum V(t_j)} dT
\end{eqnarray*}

\begin{thm} \label{intoutGue}
  Let $m \geq 1$ and $F$ a function of $m$ variables.
  Define the symmetrization of $F$ to $n$ variables as:
  \begin{equation*}
    \Fsp = \sum_{i_1,i_2,\dots,i_m \mbox{ distinct}} \Fp.
  \end{equation*}
  Then the expected value of $\Fs$, applied to the eigenvalues of
  a random unitary matrix from $UUE_n$, can be expressed as
  an integral in $m$ variables:
  \begin{eqnarray*}
    E(\Fs) & = & \int F(t_1,t_2,\dots,t_m)
      \det_{m\times m} \left[ K_n(t_j,t_k) \right]
      dt_1 \dots dt_m.
  \end{eqnarray*}
  The projection kernel $K_n(x,y)$ appearing inside the determinant will
  be defined below.
\end{thm}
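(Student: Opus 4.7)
The plan is to reduce the joint probability density function (j.p.d.f.) to a determinantal form built from the projection kernel, and then integrate out $n-m$ variables using a reproducing-kernel identity (Gaudin's lemma).

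First, I would rewrite the j.p.d.f. of $UUE_n$. Let $\phi_j$ denote the $(j)$th orthonormal polynomial with respect to $e^{-nV(x)}dx$ and set $\eta_j(x) = \phi_j(x)e^{-\frac{n}{2}V(x)}$, so that $\{\eta_j\}_{j=0}^{n-1}$ is orthonormal in $L^2(\mathbb{R})$. Since $\phi_j$ has degree exactly $j$ with positive leading coefficient, row-reduction on a Vandermonde matrix gives
\begin{equation*}
  \prod_{i<j}(t_j-t_i) \;=\; (\text{const})\,\det_{0\le j,k \le n-1}\!\bigl[\phi_j(t_{k+1})\bigr].
\end{equation*}
Squaring this, absorbing each factor $e^{-nV(t_k)}$ symmetrically into the two copies of the Vandermonde determinant, and defining $K_n(x,y) = \sum_{j=0}^{n-1}\eta_j(x)\eta_j(y)$, the j.p.d.f. becomes
\begin{equation*}
  \rho_n(t_1,\ldots,t_n) \;=\; \frac{1}{n!}\,\det_{1\le j,k\le n}\!\bigl[K_n(t_j,t_k)\bigr],
\end{equation*}
with normalization fixed by requiring $\int \rho_n\,dT = 1$.

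Next, I would prove the Gaudin reduction lemma: if $K$ is any symmetric kernel satisfying (a) $\int K(x,y)K(y,z)\,dy = K(x,z)$ and (b) $\int K(y,y)\,dy = c$, then for every $N\ge 1$,
\begin{equation*}
  \int \det_{1\le j,k\le N}\!\bigl[K(t_j,t_k)\bigr]\,dt_N \;=\; (c-N+1)\,\det_{1\le j,k\le N-1}\!\bigl[K(t_j,t_k)\bigr].
\end{equation*}
The orthonormality of $\{\eta_j\}$ gives both (a) and (b) with $c=n$ for our $K_n$. The lemma follows by expanding the determinant along permutations: the identity permutation contributes $n$ times the $(N-1)\times(N-1)$ determinant, and each permutation touching $t_N$ in two off-diagonal entries collapses, via the reproducing property, to a permutation of size $N-1$; careful sign/cycle bookkeeping yields the stated coefficient $(c-N+1)$.

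Iterating the lemma from $N=n$ down to $N=m+1$ integrates out $t_{m+1},\ldots,t_n$ and produces
\begin{equation*}
  \int \rho_n(t_1,\ldots,t_n)\,dt_{m+1}\cdots dt_n \;=\; \frac{(n-m)!}{n!}\,\det_{1\le j,k\le m}\!\bigl[K_n(t_j,t_k)\bigr].
\end{equation*}
Finally, because $\rho_n$ is permutation invariant, each of the $n!/(n-m)!$ ordered choices of distinct indices $(i_1,\ldots,i_m)$ contributes the same integral, so
\begin{equation*}
  E(\Fs) \;=\; \frac{n!}{(n-m)!}\int F(t_1,\ldots,t_m)\,\rho_n(t_1,\ldots,t_n)\,dT \;=\; \int F(t_1,\ldots,t_m)\,\det_{1\le j,k\le m}\!\bigl[K_n(t_j,t_k)\bigr]\,dt_1\cdots dt_m,
\end{equation*}
which is the claim. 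The main obstacle is the combinatorial accounting in Gaudin's reduction lemma: one must carefully pair each permutation of $\{1,\ldots,N\}$ that moves $t_N$ nontrivially with its image under collapsing the cycle through $N$, track the sign change, and verify that the reproducing property $K*K = K$ produces exactly the permutation of $\{1,\ldots,N-1\}$ needed — all while keeping the constant $(c-N+1)$ straight.
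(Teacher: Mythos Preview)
Your proposal is correct and follows essentially the same route as the paper: rewrite the Vandermonde factor via orthonormal polynomials to get $\rho_n = (\text{const})\det[K_n(t_j,t_k)]$, then iterate Gaudin's reduction lemma (the paper's Lemma~\ref{oneoutlemGue}, proved by expanding along the last column and using the reproducing property) to integrate out $t_{m+1},\ldots,t_n$, and finally use symmetry to account for the $n!/(n-m)!$ ordered tuples. The only cosmetic difference is that you pin the normalization constant to $1/n!$ up front, whereas the paper carries an undetermined constant and fixes it at the end by testing against $F\equiv 1$.
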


%***************************************************************
\subsection{Definition of the Projection Kernel $K_n(x,y)$}

We form the sequence of {\it normalized orthogonal polynomials} by
performing Gram-Schmidt orthonormalization on the sequence
$1,x,x^2,x^3,\dots$, with respect to the inner product
\begin{equation*}
  (f,g) = \intline f(x) g(x) e^{-nV(x)} dx.
\end{equation*}
We always choose the sign of the leading coefficient of the polynomials
to be positive.  We call th resulting sequence $\phi_{n,j}(x)$ the
normalized orthogonal polynomials with respect to the
measure $e^{-nV(x)}dx$.

For each polynomial we have a choice of sign -- we choose so that
the leading coefficient of each polynomial is positive. Let the
resulting sequence of polynomials be $\phi_j$.  Thus,
\begin{equation*}
  \intline \phi_j(x) \phi_k(x) e^{-nV(x)} dx = \delta_{j,k}.
\end{equation*}

An alternative normalization is to choose the leading coefficient
of each polynomial to be $1$. This would result in {\it monic
orthogonal polynomials.}  We will use $\phi_{n,j}$ to denote
normalized orthogonal polynomials and $\pi_{n,j}$ to denote monic
orthogonal polynomials.  When the context is clear, we will omit the
subscript $n$.

The most familiar examples of orthogonal polynomials are Hermite
polynomials.  Hermite polynomials are orthogonal polynomials with
respect to $e^{-x^2}dx$.  They are normalized so that the leading
term is $2^jx^j$.

\begin{lem}[Christoffel-Darboux + three-term recurrence relation] \label{chrisdar}
  Using the above notation, let
  \begin{eqnarray*}
    a_k & = & \int x \phi_k^2(x) e^{-nV(x)} dx \qquad \mbox{for} \quad k\geq 0\\
    b_k & = & \int x \phi_{k+1}(x)\phi_k(x) e^{-nV(x)} dx \qquad \mbox{for} \quad k\geq 0.
  \end{eqnarray*}
  Then
  \begin{eqnarray*}
    x\phi_0(x) & = & b_0\phi_1(x) + a_0 \phi_0(x) \\
    x\phi_k(x) & = & b_k\phi_{k+1}(x) + a_k \phi_k(x) +
      b_{k-1}\phi_{k-1}(x) \quad \mbox{ for } k\geq 1.
  \end{eqnarray*}
  and
  \begin{equation*}
    \sum_{j=0}^{n-1} \phi_j(x)\phi_j(y)
    = b_{n-1} \frac{\phi_n(x)\phi_{n-1}(y) - \phi_n(y)\phi_{n-1}(x)}{x-y}.
  \end{equation*}
\end{lem}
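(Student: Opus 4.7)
The plan is to prove the three-term recurrence first by a direct orthogonality argument, and then obtain the Christoffel--Darboux identity from it by a standard telescoping calculation.

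For the recurrence, note that $x\phi_k(x)$ is a polynomial of degree $k+1$, so it lies in the span of $\phi_0,\phi_1,\ldots,\phi_{k+1}$. Writing $x\phi_k(x)=\sum_{j=0}^{k+1} c_{k,j}\phi_j(x)$ and using orthonormality, the coefficients are
\begin{equation*}
  c_{k,j} \;=\; \int x\,\phi_k(x)\phi_j(x)\,e^{-nV(x)}\,dx.
\end{equation*}
For $j\leq k-2$, the function $x\phi_j(x)$ is a polynomial of degree at most $k-1$, so by orthogonality of $\phi_k$ to all lower-degree polynomials we get $c_{k,j}=0$. The three surviving coefficients are $c_{k,k}=a_k$ by definition, $c_{k,k+1}=b_k$ by definition, and $c_{k,k-1}=b_{k-1}$ by the symmetry of the weighted inner product applied to the defining formula for $b_{k-1}$. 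The case $k=0$ is identical, simply with no $b_{-1}$ term.

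For Christoffel--Darboux, I would run the classical two-variable trick. Multiply the recurrence at $x$ by $\phi_k(y)$, multiply the recurrence at $y$ by $\phi_k(x)$, and subtract. The diagonal $a_k\phi_k(x)\phi_k(y)$ terms cancel, and after setting
\begin{equation*}
  D_k \;=\; \phi_{k+1}(x)\phi_k(y)-\phi_{k+1}(y)\phi_k(x),
\end{equation*}
one obtains the clean identity
\begin{equation*}
  (x-y)\,\phi_k(x)\phi_k(y) \;=\; b_k D_k \;-\; b_{k-1} D_{k-1},
\end{equation*}
valid for $k\geq 1$, with the $k=0$ case giving $(x-y)\phi_0(x)\phi_0(y)=b_0 D_0$ (adopt the convention $b_{-1}D_{-1}=0$). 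Summing from $k=0$ to $k=n-1$ telescopes the right-hand side to $b_{n-1}D_{n-1}$, and dividing by $x-y$ yields exactly the stated Christoffel--Darboux formula.

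There is no genuine obstacle here; the whole proof is algebraic bookkeeping around the orthogonality of the $\phi_j$. The only step that requires a moment of care is verifying that the $k=0$ boundary term in the telescoping sum behaves correctly (equivalently, that the convention $b_{-1}=0$ is consistent with the truncated $k=0$ recurrence), which one checks directly by reading off the coefficient formula. Everything else is forced by orthonormality and the degree of $x\phi_k(x)$.
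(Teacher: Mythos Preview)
Your proof is correct and essentially matches the paper's own argument: the three-term recurrence is established by the same orthogonality-plus-degree argument, and your telescoping identity $(x-y)\phi_k(x)\phi_k(y)=b_kD_k-b_{k-1}D_{k-1}$ is exactly the inductive step the paper writes out. The only cosmetic difference is that the paper phrases the Christoffel--Darboux portion as an induction on the partial-sum identity rather than as an explicit telescoping sum, but the algebra is identical.
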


\begin{proof}
  To prove the three term recurrence relation, observe that
  $x\phi_k(x)$ is a polynomial of degree $(k+1)$, so it is
  orthogonal to all polynomials of degree higher than $(k+1)$.  It
  is also orthogonal to polynomials of degree $j$ less than
  $(k-1)$:
  \begin{equation*}
    \int (x\phi_k(x))\phi_j(x)d\mu(x) = \int \phi_k(x)
    (x\phi_j(x)) d\mu(x) = 0.
  \end{equation*}
  The second integral above is zero because $(x\phi_j)$ is a
  polynomial of degree at most $(k-1)$, and $\phi_k(x)$ is
  orthogonal to polynomials of such low degree.
  The Christoffel-Darboux formula itself is proven by induction.
  The inductive step is the following:
  \begin{eqnarray*}
    & & (x-y) \left( b_{k} \frac{\phi_{k+1}(x)\phi_{k}(y)
      - \phi_{k+1}(y)\phi_{k}(x)}{x-y} \right. \\
    & & \qquad \left. - b_{k-1} \frac{\phi_k(x)\phi_{k-1}(y)
      - \phi_k(y)\phi_{k-1}(x)}{x-y} \right) \\
    & & = \left( b_k\phi_{k+1}(x)
      + b_{k-1}\phi_{k-1}(x) \right) \phi_k(y) \\
    & & \qquad - \phi_k(x)\left( b_k\phi_{k+1}(y)
      + b_{k-1}\phi_{k-1}(y) \right) \\
    & & = \left( b_k\phi_{k+1}(x)
      + b_{k-1}\phi_{k-1}(x) + a_k\phi_k(x) \right) \phi_k(y) \\
    & & \qquad - \phi_k(x)\left( b_k\phi_{k+1}(y)
      + b_{k-1}\phi_{k-1}(y) + a_k\phi_k(y) \right)\\
    & & =  x\phi_k(x)\phi_k(y) - \phi_k(x) y \phi_k(y) \\
    & & = (x-y) \phi_k(x)\phi_k(y)
  \end{eqnarray*}
\end{proof}

For computations elsewhere, we find it more convenient to use $\eta_{n,j}(x)$, where
\begin{equation*}
  \eta_{n,j}(x) = \frac{\phi_{n,j}(x)}{e^{\frac{n}{2}V(x)}}.
\end{equation*}
The advantage of using $\eta_{n,j}(x)$ is that they are orthonormal with
respect to Lebesgue measure on $\RR$.  Again, we will omit the subscript $n$
when the context is clear.  Using this notation, let
\begin{eqnarray*}
  K_n(x,y) & = & \sum_{j=0}^{n-1} \eta_j(x)\eta_j(y) \\
  & = & b_{n-1} \frac{\eta_n(x)\eta_{n-1}(y) - \eta_n(y)\eta_{n-1}(x)}{x-y}.
\end{eqnarray*}

%***************************************************************
\subsection{Proof of Theorem~\ref{intoutGue}}

We will use the following Lemma to express $E(\Fs)$ as the
integral of an $n\times n$ determinant:

\begin{lem} \label{vantokerGue}
  Let $K_n(x,y)$ be the projection kernel above.  Then:
  \begin{equation*}
    \prod_{0\leq j<k<n} (t_j-t_k)^2
      e^{-n \sum V(t_j)}
    = \kaptwo^2 \det_{n\times n} [K_n(t_j,t_k)],
  \end{equation*}
  where $\kaptwo$ depends only on the potential $V(x)$ and $n$.
\end{lem}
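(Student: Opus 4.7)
The plan is to prove this by combining the classical Vandermonde identity with the change of basis from monomials to orthogonal polynomials, and then to rewrite a product of determinants as a determinant of a product via the Cauchy--Binet identity.

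First, I would recall the Vandermonde identity
\begin{equation*}
  \prod_{0\leq j<k <n}(t_j - t_k) \;=\; \pm \det_{n\times n}\bigl[t_j^{k-1}\bigr].
\end{equation*}
Next, because each $\phi_k(x)$ has degree exactly $k$ with positive leading coefficient $\gamma_k$, the monomial basis $\{1,x,\dots,x^{n-1}\}$ and the orthogonal basis $\{\phi_0,\dots,\phi_{n-1}\}$ are related by an upper triangular matrix with diagonal entries $\gamma_0,\dots,\gamma_{n-1}$. Performing the corresponding column operations on the Vandermonde determinant does not change it up to this triangular transformation, so there is a positive constant $\kappa$ (depending only on $V$ and $n$, namely $\kappa = \prod_{k=0}^{n-1}\gamma_k^{-1}$) with
\begin{equation*}
  \prod_{j<k}(t_j - t_k)^2 \;=\; \kappa^{-2}\,\bigl(\det_{n\times n}[\phi_{k-1}(t_j)]\bigr)^2.
\end{equation*}

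Then I would absorb the exponential weight into the determinant. Since $e^{-n\sum V(t_j)} = \prod_j e^{-nV(t_j)/2}\cdot \prod_j e^{-nV(t_j)/2}$, we may pull one factor of $e^{-nV(t_j)/2}$ into the $j$th row of $\det[\phi_{k-1}(t_j)]$ and the other factor of $e^{-nV(t_j)/2}$ into the $j$th row of the second copy, converting each $\phi_k(t_j)$ into $\eta_k(t_j) = \phi_k(t_j)e^{-nV(t_j)/2}$. This gives
\begin{equation*}
  \prod_{j<k}(t_j - t_k)^2\,e^{-n\sum V(t_j)} \;=\; \kappa^{-2}\,\bigl(\det_{n\times n}[\eta_{k-1}(t_j)]\bigr)^2.
\end{equation*}

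Finally, writing $A$ for the $n\times n$ matrix $[\eta_{k-1}(t_j)]$, we have $(\det A)^2 = \det A \cdot \det A^{T} = \det(A A^{T})$, and the $(i,j)$ entry of $A A^{T}$ is
\begin{equation*}
  \sum_{k=0}^{n-1}\eta_k(t_i)\,\eta_k(t_j) \;=\; K_n(t_i,t_j).
\end{equation*}
Setting $\kaptwo = \kappa^{-1}$ yields the claimed identity. The only nontrivial step is bookkeeping the change-of-basis constant, and the Cauchy--Binet-style collapse $(\det A)^2 = \det(AA^T)$ is the conceptual core; neither presents a genuine obstacle, which is why this lemma is usually stated without comment in the random matrix literature.
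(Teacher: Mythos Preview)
Your proof is correct and follows essentially the same route as the paper: Vandermonde determinant, column operations to pass to the orthogonal polynomials, absorption of the weight $e^{-nV(t_j)/2}$ into each row to produce the $\eta_k$, and then $(\det A)^2 = \det(AA^T) = \det[K_n(t_i,t_j)]$. The only blemish is a reciprocal slip in your bookkeeping of the constant (you should get $\kappa^{2}$ rather than $\kappa^{-2}$, hence $\kaptwo=\kappa$), but since the lemma only asserts the existence of some $\kaptwo$ depending on $V$ and $n$, this does not affect the argument.
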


\begin{proof}
  We recognize the component $\prod_{0\leq j<k<n} (t_j-t_k)$ as
  a Vandermonde determinant:
  \begin{equation*}
    \prod_{1\leq j<k\leq n} (t_j-t_k)  =
        \det \left[\begin{array} {c c @{\:\dots\:} c}
            1 & t_1 & t_1^{n-1} \\
            \vdots & \vdots & \vdots \\
            1 & t_n & t_n^{n-1}
        \end{array}  \right]
  \end{equation*}
  Performing column operations we replace the monomials $t_j^k$
  with orthogonal polynomials $\pi_{n,k}(t_j)$.
  To replace the monic orthogonal polynomials $\pi_{n,k}$
  with the normalized orthogonal polynomials $\phi_{n,k}$,
  we divide by the leading coefficients.
  We next replace $\phi_j$ with $\eta_j$ in order
  to absorb a factor of  $e^{-\frac{n}{2} \sum V(t_j)}$:
  \begin{eqnarray*}
    & & \prod_{1\leq j<k\leq n} (t_j-t_k)    e^{-\frac{n}{2} \sum V(t_j)}    \\
    & & \quad =
        \det \left[\begin{array} {c c @{\:\dots\:} c}
            \pi_0(t_1) & \pi_1(t_1) & \pi_{n-1}(t_1) \\
            \vdots & \vdots & \vdots \\
            \pi_0(t_n) & \pi_1(t_n) & \pi_{n-1}(t_N)
        \end{array}  \right]
        e^{-\frac{n}{2} \sum V(t_j)} \\
    & & \quad = \kaptwo
        \det \left[\begin{array} {c c @{\:\dots\:} c}
            \phi_0(t_1) & \phi_1(t_1) & \phi_{n-1}(t_1) \\
            \vdots & \vdots & \vdots \\
            \phi_0(t_n) & \phi_1(t_n) & \phi_{n-1}(t_N)
        \end{array}  \right]
        e^{-\frac{n}{2} \sum V(t_j)} \\
    & & \quad = \kaptwo
        \det \left[\begin{array} {c c @{\:\dots\:} c}
            \eta_0(t_1) & \eta_1(t_1) & \eta_{n-1}(t_1) \\
            \vdots & \vdots & \vdots \\
            \eta_0(t_n) & \eta_1(t_n) & \eta_{n-1}(t_N)
        \end{array}  \right] \\
    & & \quad = \kaptwo \det [M].
  \end{eqnarray*}
  Squaring the above formula,
  \begin{eqnarray*}
    & & \prod_{1\leq j<k\leq n} (t_j-t_k)^2    e^{-\frac{n}{2} \sum V(t_j)} \\
    & & \quad = \kaptwo^2 \det(M)^2 \\
    & & \quad = \kaptwo^2 \det(MM^T)^2 \\
    & & \quad = \kaptwo^2 \det_{n\times n} [K_n(t_j,t_k)].
  \end{eqnarray*}
\end{proof}

We now use the results of Lemma~\ref{vantokerGue} to express $E(\Fs)$
as the integral of an $n\times n$ determinant:

\begin{eqnarray*}
  E(\Fs) & = & \frac{\kapone}{\kaptwo^2} \int \Fs(t_1,\dots,t_n)
      \det_{n\times n}\left[ K_n(t_j,t_k) \right] dt_1 dt_2 \dots dt_n
\end{eqnarray*}

Unfortunately, this is
an $n$ dimensional integral instead of $m$ dimensional.
We use the next Lemma to ``integrate out'' the extra dimensions one at a time.

\begin{lem} \label{oneoutlemGue}
  Let $K_n(x,y) = \sum_{j=0}^{n-1} \eta_j(x)\eta_j(y)$ be the
  projection of $n$ functions orthonormal with respect
  to Lebesgue measure.  Then integrating
  \begin{equation*}
    \det_{l\times l} [K_n(t_j,t_k)]
  \end{equation*}
  over the last variable $t_l$ is the same as multiplying
  by $(n-l+1)$.  That is,
  \begin{equation*}
    \intline \det_{l\times l}[K_n(t_j,t_k)] dt_l =
        (n-l+1) \det_{(l-1)\times(l-1)}[K_n(t_j,t_k)].
  \end{equation*}
\end{lem}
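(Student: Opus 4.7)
The plan is to expand the $l \times l$ determinant as a sum over permutations of $\{1,\ldots,l\}$ and integrate over $t_l$ term by term, using two key properties of the projection kernel: the reproducing (idempotent) property
\begin{equation*}
  \int K_n(x,y) K_n(y,z)\, dy = K_n(x,z),
\end{equation*}
which follows from the orthonormality $\int \eta_i\eta_j = \delta_{ij}$, and the trace identity $\int K_n(y,y)\, dy = n$.

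First I would partition the permutations $\sigma\in S_l$ according to whether $\sigma(l)=l$. For the terms with $\sigma(l)=l$, the variable $t_l$ appears only in the factor $K_n(t_l,t_l)$; integrating produces the factor $n$, and what remains is exactly the expansion of $\det_{(l-1)\times(l-1)}[K_n(t_j,t_k)]$. So these contribute $n\,\det_{(l-1)\times(l-1)}[K_n(t_j,t_k)]$.

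Next, for $\sigma$ with $\sigma(l)\neq l$, the variable $t_l$ appears in precisely two factors, namely $K_n(t_{\sigma^{-1}(l)},t_l)$ and $K_n(t_l,t_{\sigma(l)})$; by the reproducing property, integration in $t_l$ collapses these to $K_n(t_{\sigma^{-1}(l)},t_{\sigma(l)})$. The resulting product corresponds to a permutation $\sigma'$ of $\{1,\ldots,l-1\}$ obtained by deleting $l$ from its cycle in $\sigma$. For fixed $\sigma'$ there are exactly $l-1$ preimages $\sigma$ (indexed by the position at which $l$ is spliced back into the chosen cycle), so this step is an $(l-1)$-to-$1$ forgetful map.

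The subtle point, and the one requiring the most care, is the sign bookkeeping. Removing an element from a cycle of length $\geq 2$ shortens the cycle by one and hence reverses the parity, giving $\mathrm{sgn}(\sigma)=-\mathrm{sgn}(\sigma')$. Therefore the $\sigma(l)\neq l$ terms sum to $-(l-1)\det_{(l-1)\times(l-1)}[K_n(t_j,t_k)]$. Adding the two contributions gives the advertised factor $n-(l-1)=n-l+1$, completing the reduction.
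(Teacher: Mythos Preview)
Your proof is correct and follows essentially the same approach as the paper: split off the diagonal contribution $K_n(t_l,t_l)$ (yielding the factor $n$) and use the reproducing property to collapse the remaining terms, each contributing $-\det_{(l-1)\times(l-1)}$. The only cosmetic difference is that the paper expands by minors along the last column and tracks signs through row rearrangements, whereas you use the full permutation expansion and read off the sign change from the cycle structure; the underlying mechanism is identical.
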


\begin{proof}
  Expand by minors along the last column.  The last entry in this column
  contributes
  \begin{eqnarray*}
    & & \det_{(l-1)\times(l-1)}[K_n(t_j,t_k)]  \left( \int K_n(t_l,t_l) dt_l \right) \\
    & & \qquad = n \det_{(l-1)\times(l-1)}[K_n(t_j,t_k)].
  \end{eqnarray*}

  Now for the $(m-1)$ other minors.  Consider the minor by expanding along
  $K_n(t_j,t_l)$, where $1\leq j \leq (l-1)$:
  \begin{equation*}
     (-1)^{l-j} \int \det \left[
         \begin{array} {c c @{\:\dots\:} c}
            K_n(t_1,t_1) & K_n(t_1,t_2) & K_n(t_1,t_{l-1}) \\
            \vdots & \vdots & \vdots \\
            K_n(t_{j-1},t_1) & K_n(t_{j-1},t_2) & K_n(t_{j-1},t_{l-1}) \\
            K_n(t_{j+1},t_1) & K_n(t_{j+1},t_2) & K_n(t_{j+1},t_{l-1}) \\
            \vdots & \vdots & \vdots \\
            K_n(t_l,t_1) & K_n(t_l,t_2) & K_n(t_l,t_{l-1})
         \end{array}
       \right] K_n(t_j,t_l) dt_l
  \end{equation*}
  Each of the $(l-1)!$ terms in the determinant has exactly one term in row
  $l$, say $K-n(t_l,t_i)$, but otherwise has no dependence of $t_l$.  To
  integrate this term against $K_n(t_j,t_l) dt_l$, observe that
  $K_n(x,y)$ is a projection kernel:
  \begin{equation*}
    \int K_n(x,y) K_n(y,z) dy = K_n(x,z).
  \end{equation*}
  The resulting contribution is:
  \begin{eqnarray*}
     & & (-1)^{l-j} \det \left[ \begin{array} {c c @{\:\dots\:} c}
            K_n(t_1,t_1) & K_n(t_1,t_2) & K_n(t_1,t_{l-1}) \\
            \vdots & \vdots & \vdots \\
            K_n(t_{j-1},t_1) & K_n(t_{j-1},t_2) & K_n(t_{j-1},t_{l-1}) \\
            K_n(t_{j+1},t_1) & K_n(t_{j+1},t_2) & K_n(t_{j+1},t_{l-1}) \\
            \vdots & \vdots & \vdots \\
            K_n(t_j,t_1) & K_n(t_j,t_2) & K_n(t_j,t_{l-1})
        \end{array}  \right] \\
     & & \quad = - \det \left[ \begin{array} {c c @{\:\dots\:} c}
            K_n(t_1,t_1) & K_n(t_1,t_2) & K_n(t_1,t_{l-1}) \\
            \vdots & \vdots & \vdots \\
            K_n(t_{l-1},t_1) & K_n(t_{l-1},t_2) & K_n(t_{l-1},t_{l-1})
        \end{array}  \right]
  \end{eqnarray*}
  Adding the contribution from the last minor to the other $(m-1)$ minors
  yields the conclusion of the Lemma.
\end{proof}

Using Lemmas~\ref{vantokerGue} and~\ref{oneoutlemGue} allows
an induction on the following statement for $m \leq l \leq n$:
\begin{equation*}
  E(\Fs) = \frac{\kapone}{\kaptwo^2}
    \frac{(n-l)!}{(n-m)!} \int F(t_1,t_2,\dots,t_m)
      \det_{l \times l} [K_n(t_j,t_k)]  dt_1 dt_2\dots dt_l
\end{equation*}
To prove the base case $l=n$, observe that $\Fs$ is a sum of $\frac{n!}{(n-m)!}$
terms.  Each of these terms contributes as much to $E(\Fs)$ as
the term $F(t_1,t_2,\dots,t_m)$ does.  Specializing to the case $l=m$,
we obtain:
\begin{eqnarray*}
  E(\Fs) & = & \frac{\kapone}{\kaptwo^2}
    \int F(t_1,t_2,\dots,t_m)
      \det_{m\times m} \left[ K_n(t_j,t_k) \right]
      dt_1 \dots dt_m.
\end{eqnarray*}
 This proves Theorem~\ref{intoutGue}, except for the presence of
a multiplicative constant depending only on $V$ and $n$.  To find this
constant, let $F(x) \equiv 1$ be a function of one variable.  Then
\begin{equation*}
  E(\Fs) = \sum_{1\leq j \leq n} 1 = n
\end{equation*}
and
\begin{equation*}
  \int K_n(x,x) dx = n,
\end{equation*}
establishing that the multiplicative constant is $1$.

%***************************************************************
\subsection{The Circular Unitary Ensemble}
\label{intoutCUEsubsec}

Applying the method of Gaudin to the circular unitary ensemble (CUE)
yields the following Theorem, which differs from
Theorem~\ref{intoutGue} only in the definition of
the projection kernel $K_n(x,y)$.

\begin{thm} [Method of Gaudin, CUE version] \label{intoutCue}
  Let $m \geq 1$ and $F$ a function of $n$ variables.
  Define the symmetrization of $F$ to $n$ variables as:
  \begin{equation*}
    \Fsp = \sum_{i_1,i_2,\dots,i_m \mbox{ distinct}} \Fp.
  \end{equation*}
  Then the expected value of $\Fs$, applied to the eigenvalues of
  a random unitary matrix from $U_n$, can be expressed as
  an integral in $m$ variables:
  \begin{eqnarray*}
    E(\Fs) & = & \int F(t_1,t_2,\dots,t_m)
      \det_{1\leq j,k \leq m} \left[ K_n(t_j,t_k) \right]
      dt_1 \dots dt_m,
  \end{eqnarray*}
  where the projection kernel is
  \begin{equation*}
    K_n(x,y) = \frac{1}{2\pi} \frac{e^{in(x-y)}-1}{e^{i(x-y)}-1}.
  \end{equation*}
\end{thm}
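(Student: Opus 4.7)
The plan is to mirror the three-step proof of Theorem~\ref{intoutGue}, replacing the orthonormal system $\{\eta_j\}$ by the Fourier basis $e_j(\theta) = e^{ij\theta}/\sqrt{2\pi}$, which is orthonormal in $L^2([0,2\pi))$. By Weyl integration (Appendix~\ref{WeylAppendix}) the joint density of the $U_n$ eigenphases is
\begin{equation*}
  \frac{1}{(2\pi)^n\, n!}\prod_{j<k}\left|e^{i\theta_j}-e^{i\theta_k}\right|^2,
\end{equation*}
so the entire task is to recast this density in determinantal form and then integrate out the extra variables.

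The CUE analogue of Lemma~\ref{vantokerGue} comes from recognizing the circular Vandermonde as a determinant: $\prod_{j<k}(e^{i\theta_k}-e^{i\theta_j}) = \det[e^{i(l-1)\theta_j}]_{j,l=1}^n = \det M$. Taking the modulus squared gives $|\det M|^2 = \det[MM^*]$, and a direct geometric-series calculation shows
\begin{equation*}
  (MM^*)_{jk} = \sum_{l=0}^{n-1} e^{il(\theta_j-\theta_k)} = 2\pi\, K_n(\theta_j,\theta_k).
\end{equation*}
Hence the joint density rewrites as $\frac{1}{n!}\det[K_n(\theta_j,\theta_k)]_{j,k=1}^n$, the factor $(2\pi)^n$ from the kernel identity cancelling the $(2\pi)^{-n}$ from the Haar normalization.

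The final step is the CUE analogue of Lemma~\ref{oneoutlemGue}: integrate out the last $n-m$ variables one at a time. The only inputs needed are (i) $\int_0^{2\pi} K_n(\theta,\theta)\,d\theta = n$ and (ii) the reproducing identity $\int_0^{2\pi} K_n(x,y) K_n(y,z)\,dy = K_n(x,z)$, both immediate from orthonormality of $\{e_j\}_{j=0}^{n-1}$ together with the Hermitian-conjugation structure $K_n(y,x) = \overline{K_n(x,y)}$; the latter is what turns the integral in (ii) into an $L^2$ inner product of Fourier coefficients. The minor-expansion argument of Lemma~\ref{oneoutlemGue} then applies verbatim, each integration producing the factor $n-\ell+1$, and the cumulative $n!/(n-m)!$ cancels the symmetrization multiplicity of $\Fs$, yielding the claimed $m$-dimensional determinantal formula. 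No real obstacle arises: the only novelty compared with the UUE proof is the complex-valued kernel, which is harmless since $[K_n(\theta_j,\theta_k)]$ is Hermitian positive semidefinite (being a positive multiple of $MM^*$) and so its determinant is automatically real and nonnegative. The only genuinely error-prone task is bookkeeping the factors of $2\pi$ through the Vandermonde-to-kernel identity so that the final formula emerges with unit constant, which one can verify as a sanity check by taking $F\equiv 1$ on a single variable and using $\int_0^{2\pi} K_n(x,x)\,dx = n$.
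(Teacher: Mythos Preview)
Your proposal is correct and follows essentially the same approach as the paper: recognize the circular Vandermonde, rewrite $|\det M|^2$ as $\det(MM^*)$ to obtain the $n\times n$ determinant of $K_n(\theta_j,\theta_k)$, verify the projection-kernel identity, and then invoke the integrate-out argument of Lemma~\ref{oneoutlemGue} verbatim. You are in fact slightly more careful than the paper about the complex/Hermitian bookkeeping and the tracking of the $2\pi$ factors, but the route is the same.
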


\begin{proof}
  For the CUE, the j.p.d.f. for the eigenvalues is
  \begin{eqnarray*}
    & & \kapthree \prod_{j<k} |e^{i\theta_j}-e^{i\theta_k}|^2 \\
    & & = \kapthree \prod_{j<k} (e^{i\theta_j}-e^{i\theta_k})
        \prod_{j<k} (e^{-i\theta_j}-e^{-i\theta_k}) \\
    & & = \kapthree \left[ \begin{array}{c c @{\dots} c}    
          1 & e^{i\theta_1} & e^{i(n-1)\theta_1} \\
          \vdots & \vdots & \vdots \\
          1 & e^{i\theta_n} & e^{i(n-1)\theta_n}
        \end{array} \right]
        \left[ \begin{array}{c @{\dots} c}
          1 & 1 \\
          e^{-i\theta_1} & e^{-i\theta_n} \\
          \vdots & \vdots \\
          e^{-i(n-1)\theta_1} & e^{-i(n-1)\theta_n}
        \end{array} \right]\\
    & & = \kapfour \det_{n\times n} [K_n(\theta_j,\theta_k)],
  \end{eqnarray*}
  where
  \begin{equation*}
    K_n(\theta_1,\theta_2) = \frac{1}{2\pi} \frac{e^{i n (\theta_1-\theta_2)} - 1}{e^{i(\theta_1-\theta_2)} - 1}.
  \end{equation*}
  Proceeding as in the proof of Theorem~\ref{intoutGue}, one must verify that
  $K_n$ is a projection kernel:
  \begin{equation*}
    \int K_n(\theta_1,\theta_2) K_n(\theta_2,\theta_3) dy = K_n(\theta_1,\theta_3).
  \end{equation*}
  It is this requirement which dictates the choice of normalization constant for $K_n$.
  The remainder of the proof is identical to the proof of Theorem~\ref{intoutGue}.
\end{proof}

%***************************************************************
\section{Appendix: One Possible Approach to the Maximum Spacing Problem}
\label{MaxSpace}
%***************************************************************

In this paper, we considered the following question: given an
interval $I$ and an ensemble of random matrices, what is the
minimum spacing between two consecutive eigenvalues in the
interval $I$?

It is natural to ask the same question about maximum spacing: what
is the {\it maximum} spacing between two consecutive eigenvalues
in the interval $I$?

We suspect that maximum spacing is a more difficult question than
minimum spacing; at least we have not been able to deal with it
yet.  We begin with heuristic predictions based on the tail
probabilities of the GUE consecutive spacing distribution. As far
as we can tell, even these tail probabilities have not been proven
rigorously.

In a region of unit mean spacing, in an appropriate limit, the
probability that an interval $I$ is free of eigenvalues is the
Fredholm determinant of the integral operator
\begin{equation*}
  \left(Id - A_I \psi\right)(x) = \psi(x) - \int_I K(x,y) \psi(y) dy,
\end{equation*}
where $K(x,y)$ is the sine kernel
\begin{equation*}
  K(x,y) = \frac{\sin(x-y)}{\pi(x-y)}.
\end{equation*}
In the above statement and formula, we could replace
$I$ by a union $J$ of intervals.  See~\cite{widom-fredholm-multiple}
Let $F(J)$ be the value of this
Fredholm determinant.

For a single interval, $F(I)$ depends only on the length of the
interval $s = |I|$.  Let $E_2(0;s)$ be this probability.  Then the
consecutive spacing distribution is
\begin{equation*}
  p_2(0;s) = \frac{\partial^2}{\partial s^2} E_2(0;s).
\end{equation*}
See~\cite{mehta}, Chapter 5 and Appendix 13.
In the minimum spacing problem, we used the Taylor expansions of
$p_2(0;s)$ centered at $s=0$:
\begin{eqnarray*}
  E_2(0;2) & = & 1 - s + \frac{\pi^2 s^4}{36}
    - \frac{\pi^4 s^6}{675} + \frac{\pi^6 s^8}{17640}
    + \dots \\
  p_2(0;s) & = & \frac{\pi^2 s^2}{3} - \frac{2 \pi^4 s^4}{45}
    + \frac{\pi^6 s^6}{315} + \dots
\end{eqnarray*}
For maximum spacing, we require the asymptotics of $p_2(0;s)$ for
large $s$.  Dyson~\cite{dyson-fredholm} derived the following expansion for large $s$:
\begin{equation*}
  \log (E_2(0;s)) = -\frac{s^2}{8} - \frac14\log(s) + 3\zeta'(-1)
  + \frac13\log(2) + \littleo(1).
\end{equation*}
Unfortunately Dyson's methods did not yield rigorous asymptotics.
The first rigorous asymptotics are more limited, and were obtained
only recently by Widom~\cite{widom-orthopoly}:
\begin{eqnarray*}
  & & \frac{\partial}{\partial s} \log(E_2(0;s))
    = \frac{s}{4} + \bigO(1) \\
  & & \implies \log(E_2(0;s)) = -\frac{s^2}{8} + \bigO(s)
\end{eqnarray*}
Since we are only doing heuristics, we choose to use Dyson's
expansion.  Truncating this expansion and differentiating twice we
obtain asymptotics for $p_2(0;s)$ for large $s$:
\begin{eqnarray*}
  \int_s^{\infty} p_s(0;t) dt
    & = & \left(\frac{s^{\frac34}}{4} + \frac{s^{-\frac34}}{4}  \right)
    e^{ -\frac{s^2}{8} + 3 \zeta'(-1) + \frac13\log(2) } \\
  p_2(0;s) & = & \left(\frac{s^{\frac74}}{16} -\frac{s^{-\frac14}}{8}  \right)
    e^{ -\frac{s^2}{8} + 3 \zeta'(-1) + \frac13\log(2) } \\
  & = & \frac{s^{\frac74}}{16}
    e^{ -\frac{s^2}{8} + 3 \zeta'(-1) + \frac13\log(2) }
\end{eqnarray*}

As in the minimum spacing problem,
we assume that the consecutive spacings are independent
random variables, aware that the assumption is false but
convenient.  Recycling notation from the minimum spacing problem,
let $\G$ be the number of consecutive spacings, of a random matrix
from an ensemble of $n\times n$ matrices, in $I_n$, which are
larger than $\gamma$.  Let $Z_n$ be the maximum spacing itself.
Then, assuming a sum of independent unlikely events, $\G$ will be
approximately Poisson with mean
\begin{equation*}
  \mu = \int_I K_n(x,x) \frac{(\gamma K_n(x,x))^{\frac34}}{4}
    e^{ -\frac{(\gamma K_n(x,x))^2}{8} + 3\zeta'(-1) + \frac13\log(2) } dx
\end{equation*}
Then $\Pr(Z_n<\gamma) = \Pr(\G=0) \approx e^{-\mu}$ is the probability
that the maximum spacing is less than $\gamma$.
We specialize to the case of constant eigenvalue density $1$, so that 
\begin{equation*}
  \mu = |I| \frac{\gamma^{\frac74}}{16}
    e^{ -\frac{\gamma^2}{8} + 3 \zeta'(-1) + \frac13\log(2) }.
\end{equation*}

Our proposed strategy for recovering $\Pr(Z_n<\gamma)$ is less
direct than in the minimum spacing problem.  Let $\Ss$ be the set
\begin{eqnarray*}
  \Ss & = & \left\{ x \left|\begin{array}{c}
      \ [x,x+\gamma] \subset I \quad \mbox{ and} \\
      \ [x,x+\gamma] \quad \mbox{ contains\ \ no\ \ eigenvalues.}
    \end{array} \right. \right\} \\
  & = & \left\{ x \left|\begin{array}{c}
      \ x\in \tilde{I} \quad \mbox{ and} \\
      \ [x,x+\gamma] \quad \mbox{ contains\ \ no\ \ eigenvalues.}
    \end{array} \right. \right\} \\
\end{eqnarray*}
Let $\X = |\Ss|$ be the size of the set $\Ss$.  We wish to estimate
$\Pr(\X=0)$, which we bound using the following inequalities:
\begin{equation*}
  \Pr(X_{\gamma-\epsilon} \leq \epsilon)
    \leq \Pr(\X=0)
    \leq \Pr(\X\leq \epsilon).
\end{equation*}
It may be possible to estimate $\Pr(\X\leq \epsilon)$ and
$\Pr(X_{\gamma-\epsilon} \leq \epsilon)$ using
estimates for the moments of $\X$ and $X_{\gamma-\epsilon}$, respectively.

The $(k)$th moment of $\X$ is
\begin{equation*}
  \int_{\tilde{I}^k}
    F\left( [x_1,x_1+\gamma] \cup \dots \cup [x_k,x_k+\gamma] \right)
    dx_1 dx_2 \dots dx_k.
\end{equation*}
We expect that the dominant contributions to this integral occur
when the $x_j$ are ``clustered together'' according to some
partition of $\{1,2, \dots,k \}$.  For example, consider the
contribution to $E(\X^4)$ corresponding to the partition
$(13|2|4)$.  The region $\Omega_{(13|2|4)} \subset \tilde{I}^4$
corresponding to this partition is
\begin{equation*}
  \Omega_{(13|2|4)} \subset \tilde{I}^4
    = \left\{ (x_1,x_2,x_3,x_4)\subset\tilde{I}^4 \left| \begin{array}{c}
        |x_1-x_3| \quad \mbox{small} \\
        |x_1-x_2| \quad \mbox{large} \\
        |x_1-x_4| \quad \mbox{large}
      \end{array}  \right. \right\}
\end{equation*}

Since $|x_1-x_3|$ is small, $[x_1,x_1+\gamma]\cup[x_3,x_3+\gamma]$
is a single interval of length $\gamma+|x_1-x_3|$.  Thus
$F([x_1,x_1+\gamma]\cup[x_3,x_3+\gamma]) =
E_2(0;\gamma+|x_1-x_3|)$, which is easily calculated and compared
to $E_2(0;\gamma)$.

When intervals $I_1$ and $I_2$ are well separated, it should be
the case that
\begin{equation*}
  F(I_1\cup I_2) \approx F(I_1) \cdot F(I_2),
\end{equation*}
and similarly for more than two intervals.  We call this
{\it approximate splitting}.  Since $|x_1-x_2|$ and $|x_1-x_4|$ are
large, we have three well-separated intervals. Thus,
\begin{eqnarray*}
  & & F\left([x_1,x_1+\gamma]\cup[x_2,x_2+\gamma]
        \cup[x_3,x_3+\gamma]\cup[x_4,x_4+\gamma]\right)\\
  & & \quad \approx F\left([x_1,x_1+\gamma]\cup[x_3,x_3+\gamma]\right)
    \cdot F([x_2,x_2+\gamma]) \cdot F([x_4,x_4+\gamma]) \\
  & & \quad \approx E_2(0;\gamma+|x_1-x_3|) \times
    E_2(0;\gamma) \times E_2(0;\gamma).
\end{eqnarray*}
The integral over $\Omega_{(13|2|4)}$ can then be approximated by
a product of three more easily evaluated integrals.

We call the readers attention to~\cite{widom-fredholm-multiple}.  It seems that
these asymptotics are for unions of intervals like $[0,n]\cup[2n,3n]$.  That
is, the ratios of length and separation remain fixed and $x\to\infty$.  For approximate splitting,
however, we require asymptotics where the intervals are of fixed length and
they are moved far apart.

Before attempting to show that the remainder of $\tilde{I}^k$
makes a negligible contribution to $E(\X^k)$, one would first have
to quantify ``large'' and ``small'' in the definition of
$\Omega_{(13|2|4)}$ and the in the regions corresponding to other
partitions.  The definition of ``small'' would depend on how
rapidly $E_2(0;\gamma+\epsilon)$ decays.  The definition of
``large'' would depend on the details of the approximate splitting
conjecture.

Here is one approach to proving the approximate splitting
\begin{equation*}
  F(I_1\cup I_2) \approx F(I_1) \cdot F(I_2).
\end{equation*}
Let $\alpha_{12,i}$ be the eigenvalues of $A_{I_1\cup I_2}$,
$\alpha_{1,j}$ the eigenvalues of $A_{I_1}$, and $\alpha_{2,k}$
the eigenvalues of $A_{I_2}$.  We must show that
\begin{equation*}
  \prod_{i}(1-\alpha_{12,i}) \approx
    \prod_{j}(1-\alpha_{1,j})
    \prod_{k}(1-\alpha_{2,k}).
\end{equation*}
Let us take for granted that all of the $\alpha$ are positive and
less than $1$; perhaps this is something that could be proven.

We expect that for the large $\alpha_{1,j}$ and $\alpha_{2,k}$
there correspond $\alpha_{12,i}$.  If this were true for all
$\alpha$, it would prove approximate separation.  Regarding the
$1-1$ correspondence, there is a subtlety which we illustrate
in Figure~\ref{eigenvalueCorrespondence}.

\begin{figure}
  \begin{center}
    \includegraphics[scale=0.6]{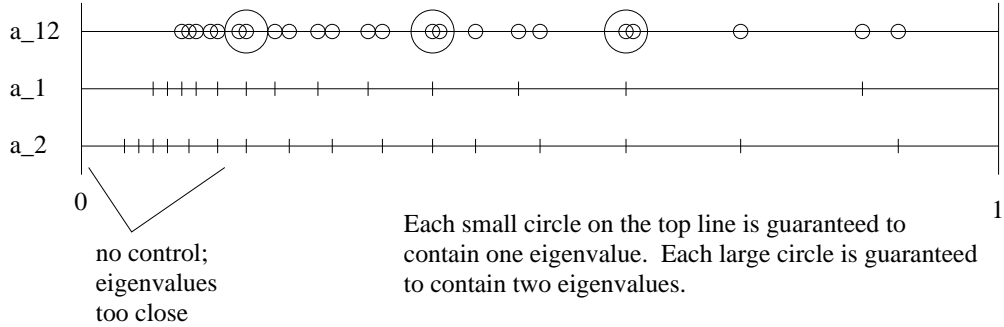}
    \caption{\label{eigenvalueCorrespondence}
        The correspondence between eigenvalues of $A_{I_1}$, $A_{I_2}$,
        and $A_{I_1\cup I_2}$.  When the circles are disjoint, there
        is no danger of confusion and we have a one-to-one correspondence
        of eigenvalues.}
  \end{center}
\end{figure}

It seems plausible that the large $\alpha_{1,j}$ are well
separated from other $\alpha_{1,j'}$, and that this could be
proven.  Similarly for the $\alpha_{2,k}$.  However, there is no
reason why one of the $\alpha_{1,j}$ cannot be close to one of the $\alpha_{2,k}$.
If we only have assurances that one eigenvalue $\alpha_{12,i}$ is nearby,
then we are missing one.
We therefore need a multiple eigenvalue version of Weyl's Criterion:

{\bf Weyl's Criterion:} Let $T$ be a self adjoint operator on a Hilbert space and $\psi$
and $\lambda$ such that
$|T\psi - \lambda\psi | \leq \epsilon |\psi|$,
where $\lambda$ is real.  Then the intersection of the spectrum of $T$ with
the interval $[\lambda-\epsilon,\lambda+\epsilon]$ is nonempty.

Embarrassingly, the author has been unable to find a reference for Weyl's
Criterion above, or the following statement.  We do not know whether the following
statement is true as stated or whether or not it is known.

{\bf Statement:} For $\epsilon>0$ sufficiently small, there is a constant
$C_{\epsilon}$ so that the following is true.
Suppose that $T$ is a self-adjoint operator on a Hilbert space $\HH$.
Suppose $\psi_1,\psi_2 \in \HH$ and $\lambda\in\RR$ are such that:
\begin{eqnarray*}
  & & |T\psi_1 - \lambda\psi_1 | \leq \epsilon |\psi_1 | \\
  & & |T\psi_2 - \lambda\psi_2 | \leq \epsilon |\psi_2 | \\
  & & |(\psi_1,\psi_2)| \leq \epsilon |\psi_1|\cdot |\psi_2|
\end{eqnarray*}
Then the interval $[\lambda - C_{\epsilon},\lambda + C_{\epsilon}]$
contains at least two eigenvalues of $T$.  The constants $C_{\epsilon}$
may be chosen so that $C_{\epsilon} \to 0$ as $\epsilon \to 0$.

To apply the multiple eigenvalue statement to the current situation, $T=A_{I_1\cup I_2}$,
and $\lambda$ be the average value of two nearby eigenvalues $\alpha_{1,j}$ and $\alpha_{2,k}$.
Let $\psi_1$ and $\psi_2$ be the corresponding eigenfunctions for $A_{I_1}$ and $A_{I_2}$.  Then
$\psi_1$ should be localized around $I_1$ and $\psi_2$ should be localized around $I_2$.  Hence
$\psi_1$ and $\psi_2$ are nearly orthogonal, and also $A_{I_2} \psi_1$ and $A_{I_1} \psi_2$ are
small, satisfying the hypotheses of the multiple eigenvalue statement.

Thus for large $\alpha$, there is a one to one correspondence between eigenvalues of
either $A_{I_1}$ or $A_{I_2}$ and eigenvalues of $A_{I_1\cup I_2}$.  For small $\alpha$,
however, this correspondence may no longer hold.  Fortunately the small $\alpha$ cannot
contribute much to any of the products $\prod(1-\alpha)$.  The reason is that
$A_{I_1\cup I_2}$ has trace $|I_1| + |I_2|$ and also has, we hope, all positive
eigenvalues.  The large eigenvalues $\alpha_{12,i}$ have accounted for all but a small
amount of this sum, and so the product $\prod (1-\alpha_{12,i})$ over the remaining $\alpha$ is
close to one.  Similarly for the other two products.

%***************************************************************
\bibliographystyle{alpha}
\bibliography{main}
%***************************************************************

%***************************************************************
\end{document}